\numberwithin{equation}{section}
\definecolor{citegreen}{rgb}{0,0.6,0}
\definecolor{refred}{rgb}{0.8,0,0}
\newcommand{\R}{\mathbb{R}}
\newcommand{\Sph}{\mathbb{S}}
\def\DDD{{\rm D}}
\def\GGG{{\rm G}}
\def\HHH{{\rm H}}
\def\RRR{{\mathrm R}}
\def\a{\alpha}
\def\b{\beta}
\newcommand{\pa}{\partial}
\newcommand{\Om}{\Omega}
\newcommand{\ffi}{\varphi}
\newcommand{\ep}{\varepsilon}
\newcommand{\rmd}{{\rm d}}
\newcommand{\go}{g_0}
\newcommand{\cgo}{g^{(0)}} 
\newcommand{\ccgo}{g_{(0)}} 
\newcommand{\Ric}{{\rm Ric}}
\newcommand{\cRic}{{\rm R}^{(0)}}
\newcommand{\D}{{\rm D}}
\newcommand{\DD}{{\rm D}^2}
\newcommand{\De}{\Delta}
\newcommand{\cho}{{\rm h}^{(0)}}
\newcommand{\Ho}{{\rm H}}
\newcommand{\g}{g}
\newcommand{\Ricg}{{\rm Ric}_g}
\newcommand{\cRicg}{{\rm R}^{(g)}}
\newcommand{\Rg}{{\rm R}_g}
\newcommand{\na}{\nabla}
\newcommand{\nana}{\nabla^2}
\newcommand{\Deg}{\Delta_g}
\newcommand{\chg}{{\rm h}^{(g)}}
\newcommand{\Hg}{{\rm H}_g}
\mathchardef\emptyset="001F
\definecolor{vgreen}{rgb}{0.1,0.5,0.2}
\definecolor{viola}{RGB}{85,26,139}
\renewcommand{\theequation}{\thesection.\arabic{equation}}
\newtheorem{theorem}{Theorem}[section]
\newtheorem{remark}{Remark}
\newtheorem{corollary}[theorem]{Corollary}
\newtheorem{definition}{Definition}
\newtheorem{proposition}[theorem]{Proposition}
\newtheorem{lemma}[theorem]{Lemma}
\begin{document}

\hyphenation{ma-ni-fold}

\title[On the geometry of the level sets of bounded static potentials
]{On the geometry of the level sets of \\ bounded static potentials
}

\author[V.~Agostiniani]{Virginia Agostiniani}
\address{V.~Agostiniani, SISSA, via Bonomea 265, 34136 Trieste, Italy} 
\email{vagostin@sissa.it}

\author[L.~Mazzieri]{Lorenzo Mazzieri}
\address{L.~Mazzieri, Scuola Normale Superiore di Pisa,
Piazza Cavalieri 7, 56126 Pisa, Italy}
\email{l.mazzieri@sns.it}


\begin{abstract} 

In this paper we present a new approach to the study of asymptotically flat static metrics arising in general relativity. In the case where the static potential is bounded, we introduce new quantities which are proven to be monotone along the level set flow of the potential function. We then show how to use these properties to detect the rotational symmetry of the static solutions, deriving a number of sharp inequalities. As a consequence of our analysis, a simple proof of the classical $3$-dimensional Black Hole Uniqueness Theorem is recovered and some geometric conditions are discussed under which the same statement holds in higher dimensions.

\end{abstract}

\maketitle

\noindent\textsc{MSC (2010): 
35B06,
\!53C21,
\!83C57,
\!35N25.
}

\smallskip
\noindent\keywords{\underline{Keywords}: static metrics, splitting theorem, Schwarzschild solution, overdetermined boundary value problems.} 

\date{\today}

\maketitle


\section{Introduction and statements of the results}
\label{sec:intro}

\bigskip

\subsection{Setting of the problem.}
Throughout this paper we let $(M, g_0)$ be an asymptotically flat $n$-dimensional Riemannian manifold, $n \geq 3$, with one end and nonempty smooth compact boundary $\pa M$, which is a priori allowed to have several connected components. We also assume that there exists a function $u \in {\mathscr C}^\infty  (M)$ such that the triple $(M, g_0, u)$ satisfies the system 
\medskip
\begin{equation}
\label{eq:pb_static_vacuum}
\left\{
\begin{array}{rcll}
\displaystyle
u\,\Ric\!\!\!\!&\, = \, &\!\!\!\!\DD u & 
{\rm in }\quad M,\\
\displaystyle
\De u\!\!\!\!& \, = \, &\!\!\!\!0  &
{\rm in }\quad M,\\
u \!\!\!\!& \, = \, &\!\!\!\! u_0   & {\rm on }\quad \pa M,\\
u(x) \!\!\!\!\!& \, \rightarrow&\!\!\!\! 1  & {\rm as }\quad |x| \to  +\infty,
\end{array}
\right.
\end{equation}
\smallskip

\noindent where $\Ric$, $\D$, and $\Delta$ represent the Ricci tensor, the Levi-Civita connection, and the Laplace-Beltrami operator of the metric $g_0$, respectively. Here, for simplicity, we let $u_0$ be a constant in $[0,1)$, but most part of the results can be easily adapted to the case where $u_0$ is a smooth function defined on the boundary of $M$ and tacking values in $[0,1)$. We notice that the first two equations in~\eqref{eq:pb_static_vacuum} are assumed to be satisfied in the whole $M$ in the sense that they hold in $M \setminus \pa M$ in the classical sense and if we take the limits of both the left hand side and the right hand side, they coincide at the boundary. In the rest of the paper the metric $g_0$ and the function $u$ will be referred to as {\em static metric} and {\em static potential}, respectively, whereas the triple $(M,g_0, u)$ will be called a {\em static solution}. A classical computation shows that if $(M,\go,u)$ satisfies~\eqref{eq:pb_static_vacuum}, then the Lorentzian metric $\gamma = -u^2 \, dt \otimes dt + \go$ satisfies the {\em vacuum Einstein equations}
\smallskip
\begin{equation*}
\Ric_\gamma \, = \, 0 \,  \quad \hbox{ in \,\, $\R \times (M \setminus \pa M)$} \, .
\end{equation*}

\smallskip

\noindent To complete the picture, we observe that, as a consequence of the system~\eqref{eq:pb_static_vacuum}, the scalar curvature $\RRR$ of $g_0$ is identically equal to zero. Moreover, in the special case where $u_0 = 0$, one has that the boundary $\pa M$ is a totally geodesics hypersurface embedded in $M$,
and the function $|\D u|$ is constant on each connected component of $\pa M$. It is also worth noticing that, since $u$ is a non constant harmonic function in $M$ and the boundary $\pa M$ is assumed to be regular, the Hopf Lemma implies that 
$|\D u|>0$ on $\pa M$. To further specify our assumptions, we recall the following definition from~\cite{Bun_Mas}.

\newpage

\begin{definition}[Asymptotically Flat Static Solutions]
\label{ass:AF} 
A solution $(M,g_0,u)$ to~\eqref{eq:pb_static_vacuum} is said to be {\em asymptotically flat
with one end} if there exists a compact set $K \subset M$
and a diffeomorphism $x = (x^1,...,x^n) :  M \setminus K \, \rightarrow \R^n \setminus B $ such that the metric $g_0$ and the static potential $u$ satisfy the following asymptotic expansions.
\begin{itemize}
\item[(i)] In the coordinates induced by the diffeomorphism $x$ the metric $\go$ can be expressed in $M \setminus K$ as 
$$
\go =  \cgo_{\alpha \beta} \, dx^\alpha \!\otimes dx^\beta ,
$$ 
and the components satisfy the decay conditions
\begin{equation}
\label{eq:AF}
\cgo_{\alpha \beta}=\delta_{\alpha \beta}+\eta_{\alpha \beta} \,,
\qquad\quad\mbox{with}\qquad\quad
\eta_{\alpha\beta} \, = \, o_2\big(|x|^{\frac{2-n}{2}}\big) \, ,\quad \hbox{as $|x| \to + \infty$} \, ,
\end{equation}
for every $\a, \b \in \{1,\!....,n \}$.

\smallskip

\item[(ii)] In the same coordinates, the static potential $u$ can be written as
\begin{equation}
\label{eq:uexp}
u \,\, = \,\, 1  -  m |x|^{2-n} + w \, ,
\qquad\mbox{with}\qquad \,\,
w \, = \,o_2(|x|^{2-n}) \, , \quad \hbox{as $|x| \to + \infty$} \, ,
\end{equation}
for some real number $m \in \R$. 
\end{itemize}
Here and thoughout the paper, we agree that for $f \in {\mathscr C}^{\infty}(M)$, $\tau \in \R$ and $k \in \mathbb{N}$ it holds
\begin{equation*}
f \, = \, o_k(|x|^{-\tau}) \quad \iff \quad \sum_{|J|\leq k} |x|^{\tau + |J|} \, \big|\pa^J \!f\big| \, = \, o(1) \, ,\quad\quad \hbox{as $|x| \to + \infty$} \,,
\end{equation*}
where the $J$'s are multi-indexes.
\end{definition}

To proceed, we consider an {\em asymptotically flat static solution} $(M,g_0,u)$ and we observe that since the function $u$ is harmonic and satisfies 
$$
u=u_0 \in [0, 1) \quad \hbox{on $\pa M$} \qquad \qquad \hbox{and} \qquad \qquad u(x) \to 1 \quad \hbox{as $x\to \infty$}\,, 
$$
it follows from the Strong Maximum Principle that $u_0<u<1$ in $M\setminus \pa M$. In particular, the coefficient $m$ that appears in the expansion~\eqref{eq:uexp} must be positive. It is a nontrivial consequence of~\eqref{eq:pb_static_vacuum} 
(see for instance~\cite{Bun_Mas} and \cite{Mia}) that such 
coefficient $m$ coincides with the ADM mass of the manifold $(M,g_0)$. 

By far, the most important solution to system~\eqref{eq:pb_static_vacuum} obeying the conditions of Definition~\ref{ass:AF} is the so called {\em Schwarzschild solution}. To describe it, we consider, for a fixed $m>0$, the manifold  with boundary $M$ given by the exterior domain $\R^n \setminus \{ |x| < (2m)^{1/(n-2)}\}$ in the flat Euclidean space, so that $\pa M  = \{ |x| = (2m)^{1/(n-2)}\}$. The {\em static metric} $\go$ and the {\em static potential} $u$ corresponding to the Schwarzschild solution are then given by
\begin{equation}
\label{eq:sol_schwarz}
g_0=\frac{d|x|\otimes d|x|}{\left(1- {2m}\, {|x|^{2-n}}\right)}
+|x|^2g_{\Sph^{n-1}}
\qquad \hbox{and}\qquad
u=\sqrt{1- {2m} \, {|x|^{2-n}}} \, ,
\end{equation}
respectively. The parameter $m>0$ is the ADM mass of the {\em Schwarzschild solution}. In dimension $n=3$, it is known by the work of Israel~\cite{Isr}, Robinson~\cite{Rob}, and
Bunting and Masood-Ul-Alam~\cite{Bun_Mas} 
that if one imposes the further condition $u\equiv0$ at $\pa M$, then~\eqref{eq:sol_schwarz} is the only {\em static solution} which is {\em asymptotically flat} with ADM mass equal to $m>0$. This is the content of the so called Black Hole Uniqueness Theorem (see~\cite{Chr_Cos_Heu,Hol_Ish,Rob_four_decades} for a comprehensive description of the subject).

In Subsection~\ref{sub:further}, we will discuss some consequence of our analysis in the special case where the {\em static potential} satisfies null Dirichlet boundary conditions at $\pa M$. In particular, when the boundary of $M$ is connected, or more in general when it is contained in a level set of $|\D u|$, we will recover the $3$-dimensional Black Hole Uniqueness Theorem (see Theorem~\ref{thm:bhu} below). Also, we will discuss some geometric conditions under which the same statement holds true in every dimension (see Theorem~\ref{thm:bhun} below).

\subsection{Statements of the main results.}
To introduce the main results, we start by the simple observation that, given a {static solution} $(M,g_0,u)$ to problem~\eqref{eq:pb_static_vacuum}, the function $U_1 \, : \, [u_0,1) \, \longrightarrow \, \R$ defined by
\begin{equation*} 
t \,\, \longmapsto \,\, U_1(t) \, = \!\!\! \int\limits_{ \{ u = t \}} \!\!\!\! |\D u| \, \rmd \sigma \phantom{\quad\,}
\end{equation*}
is constant, as it can be easily checked using the equation $\Delta u = 0$ and the Divergence Theorem. When $u_0 = 0$, such constant coincides with the capacity of the hypersurface $\pa M$ inside $(M, g_0)$, which, according to~\cite{Bra_Mia}, is defined as 
\begin{equation*}
{\rm Cap} (\pa M, g_0) = \inf  \bigg\{  \int_{\!M} \! |\D w|^2 \rmd \mu \,\, \Big| \,\,  \hbox{$w \in {\rm Lip}_{loc}(M)$, $w=0$ on $\pa M$,  $w \to 1$ as $|x| \to + \infty$}   \bigg\} \, ,
\end{equation*}
up to a multiplicative constant.
On the other hand, using the asymptotic expansions~\eqref{eq:AF} and~\eqref{eq:uexp} of $g_0$ and $u$, the constant value of $U_1$ can be computed in terms of the ADM mass $m>0$ of the {\em static solution} as
\begin{equation}
\label{eq:U1_mass}
 U_1(t) \, = \, || \D u ||_{L^1(\{u=t \})} \,\equiv \,  m\, (n-2)\, |\Sph^{n-1}|  \, , \quad\qquad t \in [u_0, 1) \, ,
\end{equation}
where $|\Sph^{n-1}|$ denotes the hypersurface area of the unit sphere sitting inside $\R^n$. 
Having this in mind, we introduce, for $p \geq 0$ and for a given constant Dirichlet boundary condition $u_0 \in [0, 1)$, the functions $U_p  :  [u_0,1) \, \longrightarrow \, \R $, defined as
\begin{equation}
\label{eq:Up}
t \,\, \longmapsto \,\, U_p(t) \, = \, \Big(\frac{2m}{1-t^2}\Big)^{\!\!\!\frac{(p-1)(n-1)}{(n-2)}}\!\!\!\!\!\! \int\limits_{ \{ u = t \}} \!\!\!\!  |\D u|^p \, \rmd \sigma .
\end{equation}
Formally, these functions can be thought of as renormalized $p$--capacities. In particular, we have that $t \mapsto U_0(t)$ is a renormalized hypersurface area functional for the level sets of $u$, whereas $t \mapsto U_1(t)$ is always constant, as already observed. In analogy with~\eqref{eq:U1_mass}, one can use the asymptotic expansions of $g_0$ and $u$ to deduce that
\begin{equation}
\label{eq:limup}
\lim_{t\to 1^{-}} \, U_p(t) \,\, = \,\, m^p (n-2)^p \, |\Sph^{n-1}| \, .
\end{equation}
Before proceeding, it is worth noticing that the functions $t \mapsto U_p(t)$ are well defined, since the integrands are globally bounded and the level sets of $u$ have finite hypersurface area. In fact, since $u$ is harmonic, the level sets of $u$ have locally finite $\mathscr{H}^{n-1}$-measure (see~\cite{Hardt_Simon} and~\cite{Lin}). Moreover, by the properness of $u$, they are compact and thus their hypersurface area is finite. Another important observation comes from the fact that, using the explicit formul\ae~\eqref{eq:sol_schwarz}, one easily realizes that the quantities
\begin{equation}
\label{eq:Pfunct_confvol}
M \, \ni \,x \, \longmapsto \,\,  \Big( \frac{2m}{1-u^2} \Big)^{\!\frac{n-1}{n-2}} \, |\DDD u| \,(x) \quad \hbox{and} \quad  [0, 1) \, \ni \, t \, \longmapsto \,\, U_0(t) \,= \!\!\!\!\int\limits_{\{u = t\}}\!\!
 \Big(\frac{1-u^2}{2m}\Big)^{\!\frac{n-1}{n-2}}
 \rmd\sigma 
\end{equation}
are constant on a Schwarzschild solution. In the following, via a conformal reformulation of problem~\eqref{eq:pb_static_vacuum}, we will be able to give a more geometric interpretation of this fact (see Subsections~\ref{sub:strategy} and~\ref{sub:conf}). On the other hand, we notice that the function $t \mapsto U_p(t)$ can be rewritten in terms of the above quantities as
\begin{equation}
\label{eq:Up2}
{ U}_p(t)\,\,=\!\!\!
\int\limits_{\{u = t\}}\!\!\!
\left[  \, \Big( \frac{2m}{1-u^2} \Big)^{\!\frac{n-1}{n-2}} \, |\DDD u|   \, \right]^{p}  \Big(\frac{1-u^2}{2m}\Big)^{\!\frac{n-1}{n-2}}
\,\, \rmd\sigma .
\end{equation}
Hence, thanks to~\eqref{eq:Pfunct_confvol}, we have that for every $p \geq 0$ the function $t \mapsto U_p(t)$ is constant on a Schwarzschild solution. Our main result illustrates how  the functions $t \mapsto U_p(t)$ can be used to detect the rotational symmetry of the {\em static solution} $(M, g_0, u)$. In fact, for $p\geq 3$, they are nonincreasing and the monotonicity is strict unless $(M, g_0, u)$ is isometric to a Schwarzschild solution. 

\newpage 

\begin{theorem}[Monotonicity-Rigidity Theorem]
\label{thm:main}
Let $(M,g_0,u)$ be an asymptotically flat solution to problem~\eqref{eq:pb_static_vacuum} in the sense of Definition~\ref{ass:AF}, with $0 \leq u_0<1$ and ADM mass equal to $m>0$. For every $p \geq 1$ we let $U_p : [u_0, 1) \rightarrow \R$ be the function defined  in~\eqref{eq:Up}. Then, the following properties hold true.
\begin{itemize}
\item[(i)] For every $p\geq 1$, the function $U_p$ is continuous.  

\smallskip

\item[(ii)] For every $p \geq 3$, the function $U_p$ is differentiable and the derivative satisfies, for every $t \in [u_0,1)$,
\begin{equation}
\label{eq:derup}
\phantom{\qquad}U_p'(t)
\,=\,
-\,(p-1)\, \Big(\frac{2m}{1-t^2}\Big)^{\!\!\!\frac{(p-1)(n-1)}{(n-2)}}\!\!\!\!\!
\int\limits_{\{u=t\}}
\!\!\!
|\DDD u|^{p-1}
\!\left[\, \HHH-
2\Big(\frac{n-1}{n-2}\Big)
\, \frac{u \,|\DDD u|}{1-u^2}
\, \right]  \rmd\sigma \,\, \leq \,\, 0 \, , 
\end{equation}
where $\HHH$ is the mean curvature of the level set $\{u=t\}$.
Moreover, if there exists $t \in [u_0, 1) \cap (0,1)$ such that $U_{p}'(t) = 0$ for some $p \geq 3$, then the static solution $(M,g_0,u)$ is isometric to a Schwarzschild solution with ADM mass equal to $m>0$. 

\smallskip

\item[(iii)] Suppose that $u=0$ at $\pa M$. Then $U_p'(0) = \lim_{t \to 0^+} U_p'(t)= 0$, for every $p\geq 3$. In particular, setting $U''_p(0) = \lim_{t \to 0^+}U_p'(t)/t$, we have that for every $p \geq 3$, it holds
\begin{equation}
\label{eq:der2up}
\phantom{\qquad} U_p''(0) \, = \,  - \Big(\frac{p-1}{2} \Big) \, (2m)^{\!\frac{(p-1)(n-1)}{(n-2)}} \!\!\!\int\limits_{\pa M}
\!
  |\DDD u|^{p-2}
\left[\, \RRR^{\pa M}\! - 
4 \, \Big(\frac{n-1}{n-2}\Big)
\, { \,|\DDD u|^2}
\, \right]  \rmd\sigma \, \leq \,0 \,  ,  
\end{equation}
where $\RRR^{\pa M}$ is the scalar curvature of the metric $g_{\pa M}$ induced by $g_0$ on $\pa M$.
Moreover, if $U_{p}''(0) =0$ for some $p \geq 3$, then the static solution $(M,g_0,u)$ is isometric to a Schwarzschild solution with ADM mass equal to $m>0$. 
\end{itemize}
\end{theorem}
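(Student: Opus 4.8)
The plan is to realise the $U_p$'s as plain $L^p$-energies of a harmonic function in the conformally related frame alluded to above. Concretely, set
\[
\bg \,=\, \Big(\tfrac{1-u^2}{2m}\Big)^{\!\frac{2}{n-2}}\go\,,\qquad\qquad \bffi \,=\, m\log\Big(\tfrac{1+u}{1-u}\Big)\,,
\]
so that $\bffi$ ranges over $[\,\bffi(u_0),+\infty)$, with $\bffi(u_0)=0$ precisely when $u_0=0$. Using only $\De u=0$ and $\RRR\equiv0$ one checks that $\bffi$ is $\bg$-harmonic, that $\bg$ has nonnegative scalar curvature --- strictly positive away from the critical set $\{\D u=0\}$ --- and, after inserting the conformal transformation laws for $|\D u|$ and for the hypersurface measure, that
\[
U_p(t)\,=\int\limits_{\{\bffi=s\}}\!\!|\bna\bffi|_{\bg}^{\,p}\,\rmd\sigma_{\bg}\,,\qquad\qquad s=\bffi(t)\,.
\]
In this gauge $\bg$ is asymptotically cylindrical with $|\bna\bffi|_{\bg}\to(n-2)m$ at infinity (this encodes~\eqref{eq:AF}--\eqref{eq:uexp}), while the Schwarzschild solution corresponds to the round cylinder over $\Sph^{n-1}$ with $\bffi$ affine along the $\R$-factor --- which makes transparent the constancy of $U_p$ observed in~\eqref{eq:Pfunct_confvol}--\eqref{eq:Up2}.

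Granting this, (i) reduces to the continuity of $s\mapsto\int_{\{\bffi=s\}}|\bna\bffi|_{\bg}^{p}\,\rmd\sigma_{\bg}$, which is elementary on the open set of regular values; since the critical set of the harmonic function $\bffi$ is $\mathscr{H}^{n-1}$-negligible and the level sets of $\bffi$ carry locally uniformly bounded $\mathscr{H}^{n-1}$-measure (\cite{Hardt_Simon,Lin}), the near-critical part of the integral is controlled by $\ep^p$ times a uniform area bound, and one concludes by the coarea formula and dominated convergence. For (ii), at a regular value $s$ the first variation of the area element along the level-set flow, together with $\De_{\bg}\bffi=0$ --- whence the mean curvature of $\{\bffi=s\}$ equals $-\,|\bna\bffi|_{\bg}^{-1}\pa_{\bar\nu}|\bna\bffi|_{\bg}$ --- gives at once
\[
\frac{\rmd}{\rmd s}\!\int\limits_{\{\bffi=s\}}\!\!|\bna\bffi|_{\bg}^{\,p}\,\rmd\sigma_{\bg}\,=\,-\,(p-1)\!\int\limits_{\{\bffi=s\}}\!\!|\bna\bffi|_{\bg}^{\,p-1}\,\bar{\HHH}\,\rmd\sigma_{\bg}\,,
\]
which, transported back to $(M,\go)$ via the chain rule and the conformal change, is precisely~\eqref{eq:derup}.

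The sign in~\eqref{eq:derup} is the crux. The Bochner formula for the $\bg$-harmonic function $\bffi$, the refined Kato inequality $|\bnana\bffi|_{\bg}^2\ge\frac{n}{n-1}\big|\bna|\bna\bffi|_{\bg}\big|^2$, and --- crucially --- the lower bound for $\Ric_{\bg}(\bna\bffi,\bna\bffi)$ in terms of $|\bnana\bffi|_{\bg}^2$ produced by the static equations in the conformal frame, together show that $|\bna\bffi|_{\bg}^{\,p-1}$ is $\bg$-subharmonic exactly in the range $p\ge3$. Integrating this subharmonicity over the superlevel set $\{\bffi>s\}$, truncating at $\{\bffi=S\}$ and letting $S\to+\infty$ --- at which stage the asymptotically cylindrical geometry forces the flux at infinity to vanish --- yields $\int_{\{\bffi=s\}}|\bna\bffi|_{\bg}^{\,p-1}\bar{\HHH}\,\rmd\sigma_{\bg}\ge0$, hence $U_p'\le0$; the same subharmonicity together with the codimension bound for the critical set of $u$ removes the a priori exceptional values and upgrades $U_p$ to a $\mathscr{C}^1$ function on $[u_0,1)$. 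Rigidity comes from the equality case: if $U_p'(t_0)=0$ for some $t_0\in(0,1)$, then $\De_{\bg}|\bna\bffi|_{\bg}^{\,p-1}\equiv0$ on $\{\bffi\ge\bffi(t_0)\}$, forcing equality in both Bochner and Kato and $\Ric_{\bg}(\bna\bffi,\bna\bffi)\equiv0$ there; this pins down $\bnana\bffi$ (umbilic level sets, with $|\bna\bffi|_{\bg}$ constant on each), so that $\bg$ is a round cylinder on $\{\bffi\ge\bffi(t_0)\}$ and, by unique continuation for the real-analytic static metric, on all of $M$ --- i.e.\ $(M,\go,u)$ is Schwarzschild.

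Finally, (iii) is a boundary limit of~\eqref{eq:derup}: when $u_0=0$ the level set $\pa M=\{u=0\}$ is totally geodesic, so both $\HHH$ and $u|\D u|/(1-u^2)$ vanish there and $U_p'(0)=\lim_{t\to0^+}U_p'(t)=0$. To evaluate $U_p''(0)=\lim_{t\to0^+}U_p'(t)/t$ one Taylor-expands the bracket in~\eqref{eq:derup}: since $|\D u|$ is constant on each component of $\pa M$, the evolution equation for the mean curvature along the level-set flow reduces at $\pa M$ to $\frac{\rmd}{\rmd t}\HHH\big|_{0}=-\,|\D u|^{-1}\Ric(\nu,\nu)$, and the traced Gauss equation (using $\RRR\equiv0$, $\HHH=0$ and vanishing second fundamental form on $\pa M$) gives $\Ric(\nu,\nu)=-\tfrac12\RRR^{\pa M}$; substituting yields~\eqref{eq:der2up}. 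The inequality $U_p''(0)\le0$ is then immediate from $U_p'\le0$ together with $U_p'(0)=0$, and its rigidity case is handled by the same Bochner--Kato argument, now localised in a one-sided neighbourhood of $\pa M$. The main obstacle throughout is the sign in (ii): distilling from the static system the sharp lower bound for $\Ric_{\bg}(\bna\bffi,\bna\bffi)$ that, combined with the refined Kato inequality, makes $|\bna\bffi|_{\bg}^{\,p-1}$ subharmonic in the range $p\ge3$ and no better, and then controlling the level-set geometry across the critical set of $u$ and at the cylindrical end precisely enough to turn subharmonicity into the stated pointwise monotonicity and rigidity.
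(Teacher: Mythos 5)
Your overall architecture --- the cylindrical ansatz, the identification of $U_p$ with an $L^p$-energy of a conformally harmonic function over its level sets (your normalisation by $2m$ and $m$ does make the constants in~\eqref{eq:upfip1} disappear, and the asymptotic value $|\bna\bffi|_{\bg}\to m(n-2)$ checks out), the Bochner--Kato machinery and the splitting argument for rigidity --- matches the paper's. But the step that carries the whole theorem, the sign in~\eqref{eq:derup}, is wrong as you state it. You claim that the static equations produce a lower bound for $\Ric_{\bg}(\bna\bffi,\bna\bffi)$ \emph{in terms of} $|\bnana\bffi|_{\bg}^2$ which, combined with the refined Kato inequality, makes $|\bna\bffi|_{\bg}^{p-1}$ genuinely $\bg$-subharmonic for $p\geq 3$. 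What the quasi-Einstein equation~\eqref{eq:pb_schw_reform} actually gives is the exact identity $\Ricg(\na\ffi,\na\ffi)=\coth(\ffi)\,\nana\ffi(\na\ffi,\na\ffi)=-\coth(\ffi)\,|\na\ffi|_g^3\,\Hg$: a first-order, sign-indefinite quantity, negative precisely where $\Hg>0$ (which is the generic situation the monotonicity is detecting), and carrying a factor $\coth(\ffi)$ that blows up at the horizon. There is no pointwise inequality $|\nana\ffi|_g^2+\Ricg(\na\ffi,\na\ffi)\geq 0$, so $\Delta_g|\na\ffi|_g^{p-1}\geq0$ fails and your ``integrate the subharmonicity over $\{\ffi>s\}$'' step has nothing to integrate. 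The paper's device is to recognise $\Ricg(\na\ffi,\na\ffi)$ as exactly the drift term $\tfrac12\big\langle\na|\na\ffi|^2_g\,\big|\,\na\log\sinh(\ffi)\big\rangle_g$, so that Bochner becomes~\eqref{eq:Bochner_schw}: $|\na\ffi|_g^{p-1}$ is subharmonic only for the \emph{drifted} operator $\Delta_g-\langle\na\cdot\,|\,\na\log\sinh(\ffi)\rangle_g$, and the integration by parts must be carried out against the weighted measure $(1/\sinh\ffi)\,\rmd\mu_g$ (Propositions~\ref{prop:byparts} and~\ref{prop:cyl}).

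The weight is not cosmetic: it is also what kills the flux at infinity. Without it, the boundary term at $\{\ffi=S\}$ is $(p-1)\int_{\{\ffi=S\}}|\na\ffi|_g^{p-1}\Hg\,\rmd\sigma_g$, which Lemma~\ref{le:bound} only shows to be bounded; you have no a priori decay rate for $\Hg$, so ``the asymptotically cylindrical geometry forces the flux at infinity to vanish'' is unsubstantiated, whereas the same integrand divided by $\sinh(S)$ manifestly tends to zero. Two smaller points: the refined Kato inequality is not needed for $p\geq3$ (the coefficient $p-3$ of $\big|\na|\na\ffi|_g\big|_g^2$ is already nonnegative; Kato only enters for the extended range $p\geq 2-1/(n-1)$ of Theorem~\ref{thm:refined}); and your treatment of continuity and differentiability across critical values of $u$ needs more than the codimension bound on ${\rm Crit}(\ffi)$ --- the paper rewrites $\Phi_p$ and $\Phi_p'$ as repartition functions of measures absolutely continuous with respect to $\mu_g$ and only then invokes negligibility of the level sets (Subsections~\ref{continuity} and~\ref{sec:diff}). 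Your computation for part (iii) via the Gauss equation is consistent with the paper's, but its rigidity statement inherits the gap above.
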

\begin{remark}
\label{rem:uno}
Notice that formula~\eqref{eq:derup} is well-posed also in the case where $\{u = t\}$ is not a regular level set of $u$. In fact, since $u$ is harmonic and proper, one has from~\cite{Hardt_Simon} and~\cite{Lin} that the $(n-1)$-dimensional Hausdorff measure of the level sets of $u$ is finite. Moreover, by the results in~\cite{Nadirashvili} and~\cite{Che_Nab_Val}, the Hausdorff dimension of its critical set is bounded above by $(n-2)$. In particular, the unit normal vector field to the level set is well defined $\mathscr{H}^{n-1}$-almost everywhere and so does the
mean curvature $\HHH$. In turn, the integrand in~\eqref{eq:derup} is well defined $\mathscr{H}^{n-1}$-almost everywhere. Finally, we observe that where $|\D u| \neq 0$ it holds
\begin{equation*}
|\D u|^{p-1} \HHH \,\, = \,\, - \, |\D u|^{p-4} \,\DD u (\D u , \D u) \,\, = \,\, - \, u \, |\D u|^{p-4} \,\Ric (\D u, \D u) \, .
\end{equation*}
Since $|\Ric|$ is uniformly bounded on $M$, this shows that the integrand in~\eqref{eq:derup} is essentially bounded and thus summable on every level set of $u$, provided $p \geq 2$. To conclude, we notice that also the hypersurface area element $\rmd\sigma$ is a priori well defined only on the regular portion of the level set. However, by the above arguments one can deduce that the density that relates $\rmd\sigma$ to the everywhere defined volume element $\rmd \mathscr{H}^{n-1}$ is well defined and bounded $\mathscr{H}^{n-1}$-almost everywhere on every level set of $u$. Hence, the integral in~\eqref{eq:derup} is well defined.
\end{remark}
\begin{remark}
Notice that under the hypothesis of the above theorem, formula~\eqref{eq:derup} implies that the only possible minimal level set is the one where $u$ vanishes, if present. 
\end{remark}
Before discussing the consequences of the above theorem (see Section~\ref{sec:conseq}) and giving some comments about the strategy of the proof, let us present a slight refinement of the main result, which holds on the end of $M$. To this aim, observe that the asymptotic behavior of the {\em static potential} $u$ forces the gradient $\D u$ to be nonzero outside of a fixed compact region. In particular, there exists a real number $\overline{u}_0 \in [u_0, 1)$ such that $|\D u|>0$ in the region $\{  \overline{u}_0 \leq u <1 \}$. As it will be clear from the proof of Theorem~\ref{thm:main}, this implies that the function $t \mapsto U_p(t)$ is continuous and differentiable in $(\overline{u}_0, 1)$ for every $p \geq 0$. Moreover, exploiting a refined version of the Kato inequality for harmonic functions, we deduce the same Monotonicity-Rigidity statement as in points $\rm (ii)$ and $\rm (iii)$ of  Theorem~\ref{thm:main} for a larger range of $p$'s. This is the content of the following theorem.

\begin{theorem}
\label{thm:refined}
Let $(M,g_0,u)$ be an asymptotically flat solution to problem~\eqref{eq:pb_static_vacuum} in the sense of Definition~\ref{ass:AF} with $0 \leq u_0<1$ and ADM mass equal to $m>0$. Let $\overline{u}_0 \in [u_0,1)$ be such that $|\D u|>0$ in the region $\{  \overline{u}_0 \leq u <1 \}$ and let $U_p : [u_0, 1) \rightarrow \R$ be the function defined  in~\eqref{eq:Up}. Then, the following properties hold true.
\begin{itemize}
\item[(i)] For $p\geq 0$, the function $U_p$ is continuous and differentiable in $(\overline{u}_0,1)$.  

\smallskip

\item[(ii)] For $p \geq 2-1/(n-1)$, we have that
$U_p'(t) \leq 0 $ for every $t \in (\overline{u}_0,1)$. 
Moreover, if there exists $t \in (\overline{u}_0, 1) $ such that $U_{p}'(t) = 0$ for some $p \geq 2-1/(n-1)$, then the static solution $(M,g_0,u)$ is isometric to a Schwarzschild solution with ADM mass equal to $m>0$.
\end{itemize}
\end{theorem}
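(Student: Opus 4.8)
The plan is to reduce everything to the conformally rescaled problem, where the assertion becomes a monotonicity property for a $p$--capacity--type flux. Set
$$\bg \, = \, \Big(\tfrac{1-u^2}{2m}\Big)^{\!\frac{2}{n-2}}\go \,, \qquad\qquad \bffi \, = \, m\,\log\tfrac{1+u}{1-u}\, .$$
A direct computation, using only $\De u = 0$ and $u\,\Ric = \DD u$, shows that $\bffi$ is $\bg$--harmonic, that $|\bna\bffi|_{\bg}$ coincides with the quantity $P := \big(\tfrac{2m}{1-u^2}\big)^{\!\frac{n-1}{n-2}}|\D u|$ from~\eqref{eq:Pfunct_confvol} (so $|\bna\bffi|_{\bg}>0$ exactly where $|\D u|>0$, hence on $\{\ove u_0 \le u < 1\}$), and that $\RRR_{\bg} = \tfrac{n-1}{(n-2)m^2}\,|\bna\bffi|^2_{\bg} \ge 0$, with equality only at the critical points of $u$. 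Moreover, using~\eqref{eq:Up2}, one checks that $U_p(t) = \int_{\{\bffi = \bffi(t)\}} |\bna\bffi|^p_{\bg}\,{\rm d}\sigma_{\bg}$. On a Schwarzschild solution $(M,\bg)$ is the round product cylinder $(\varrho_0,+\infty)\times\Sph^{n-1}$, $\bffi$ is affine, $|\bna\bffi|_{\bg}$ is constant, and hence $U_p$ is constant; for a general asymptotically flat solution the $\bg$--end is asymptotic to this cylinder, a fact that will dispose of the boundary terms at infinity.

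Point (i) is immediate: on $\{\ove u_0 < u < 1\}$ there are no critical points, so $\{\bffi = c\}$ is a smooth closed hypersurface depending smoothly on $c$ and $t\mapsto U_p(t)$ is smooth for every $p\ge 0$. Differentiating along the level-set flow of $\bffi$ and using $\De_{\bg}\bffi = 0$ — equivalently $\pa_\nu |\bna\bffi|_{\bg} = -\,|\bna\bffi|_{\bg}\,\Ho_{\bg}$ for the $\bg$--unit normal $\nu$ along $\bna\bffi$ — gives
$$U_p'(t) \; = \; -\,\frac{2m\,(p-1)}{1-t^2}\!\!\int\limits_{\{\bffi = \bffi(t)\}}\!\!\!|\bna\bffi|^{p-1}_{\bg}\,\Ho_{\bg}\;{\rm d}\sigma_{\bg}\, ,$$
which is formula~\eqref{eq:derup} after the change of variables $t\mapsto c=\bffi(t)$. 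Thus, for $p>1$, point (ii) amounts to showing that the $|\bna\bffi|^{p-1}_{\bg}$--weighted total mean curvature of the level set is nonnegative.

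The heart of the proof is this sign, and here the refined Kato inequality is essential. Write $w = |\bna\bffi|_{\bg}$, use $w^{p-1}\Ho_{\bg} = -\tfrac{1}{p-1}\pa_\nu(w^{p-1})$, and apply the divergence theorem on $\{\bffi > c\}$: the flux of $\bna(w^{p-1})$ across the distant level sets vanishes in the limit, since on the cylindrical $\bg$--end the level sets have bounded area and $w \to m(n-2)$ there with decaying derivatives (elliptic estimates on a region with bounded geometry). One is then reduced to controlling $\int_{\{\bffi > c\}} \De_{\bg}(w^{p-1})$, which one expands through the Bochner formula for $w$: the Ricci term $\Ric_{\bg}(\bna\bffi,\bna\bffi)$ is rewritten via the Gauss equation on the level sets together with $\RRR_{\bg} \ge 0$, while the Hessian term $|\bnana\bffi|^2_{\bg}$ is bounded from below by the \emph{refined} Kato inequality — the one retaining the tangential gradient $|\bna^{\Sigma} w|$ and carrying the sharp constant $\tfrac{n}{n-1}$ in front of $(\pa_\nu w)^2 = w^2\Ho_{\bg}^2$, rather than the plain bound $|\bnana\bffi|^2_{\bg} \ge |\bna w|^2_{\bg}$. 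Collecting terms and using $|A_{\bg}|^2 \ge \Ho_{\bg}^2/(n-1)$, the coefficient of the surviving bad term turns out to be exactly $p - \big(2 - \tfrac{1}{n-1}\big)$, so $U_p' \le 0$ holds precisely on the stated range; with the classical Kato inequality one only reaches $p \ge 3$, which is Theorem~\ref{thm:main}.

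For the rigidity: if $U_p'(t_0) = 0$ for some $t_0\in(\ove u_0,1)$, then all of the above inequalities are equalities on $\{\bffi > \bffi(t_0)\}$ — in particular the refined Kato inequality is saturated and $|A_{\bg}|^2 = \Ho_{\bg}^2/(n-1)$ — which forces the level sets there to be totally umbilic with $w$ constant on each of them and $\bnana\bffi$ of warped-product type; an ODE analysis along the $\bg$--gradient flow of $\bffi$ then identifies $\{\bffi > \bffi(t_0)\}$ with a piece of the round cylinder, and unique continuation for the (analytic) static system promotes this to a global isometry with Schwarzschild of mass $m$. The main obstacle is the third step: the Bochner/Kato bookkeeping must be done sharply enough to isolate the threshold $2-\tfrac{1}{n-1}$, which genuinely needs the refined — not the classical — Kato inequality, and it must be organized so that the intrinsic curvature of the level sets does not spoil the sign (this is precisely what the conformal picture, with $\RRR_{\bg}\ge 0$ entering favorably, buys); the equality analysis and the unique-continuation step are the second delicate point, whereas the regularity in (i) and the vanishing of the boundary terms at infinity are routine once the cylindrical normalization is in place.
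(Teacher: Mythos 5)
Your overall skeleton — cylindrical conformal rescaling, first variation giving $U_p'$ as a weighted total mean curvature of the level set, divergence theorem plus Bochner plus the refined Kato inequality with constant $\tfrac{n}{n-1}$, and the threshold $p-(2-\tfrac{1}{n-1})$ coming from the interaction of that constant with the exponent $p$ — is the same as the paper's. But the central step, the sign of $\int_{\{\ffi>c\}}$ of the Bochner terms, has a genuine gap as you describe it. You integrate the \emph{unweighted} quantity $\Delta_g(w^{p-1})$, whose expansion contains $(p-1)w^{p-3}\Ric_g(\na\ffi,\na\ffi)$, and you propose to control this Ricci term "via the Gauss equation together with $\Ro_g\geq 0$". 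This does not close: the twice-contracted Gauss equation reads $2\Ric_g(\nu_g,\nu_g)=\Ro_g-\Ro^{\Sigma}+|\hhh_g|^2-\Ho_g^2$, and the intrinsic scalar curvature $\Ro^{\Sigma}$ of the level set is completely uncontrolled (it can be arbitrarily large and positive, so $-\Ro^{\Sigma}$ destroys the sign); moreover the quasi-Einstein equation shows that $\Ric_g(\na\ffi,\na\ffi)=\coth(\ffi)\,\nana\ffi(\na\ffi,\na\ffi)=-\coth(\ffi)\,w^3\Ho_g$, i.e.\ the Ricci term is of exactly the same nature as the quantity $\int w^{p-1}\Ho_g$ whose sign you are trying to establish, so the bookkeeping you sketch is circular. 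The paper's key device, which is absent from your proposal, is precisely this last identity read the other way: it converts the Bochner Ricci term into a \emph{drift} term, yielding
\begin{equation*}
\Delta_g|\na\ffi|_g^{2}-\big\langle\na|\na\ffi|_g^{2}\,\big|\,\na\log\big(\sinh(\ffi)\big)\big\rangle_g=2\,|\nana\ffi|_g^2\,,
\end{equation*}
and the integration by parts must then be performed against the weighted measure $(1/\sinh(\ffi))\,\rmd\mu_g$, with respect to which the drifted operator is self-adjoint. With that weight the bulk term becomes exactly $(p-1)\,w^{p-3}\big(|\nana\ffi|_g^2+(p-3)|\na w|_g^2\big)/\sinh(\ffi)$, with no curvature of the level sets appearing, and refined Kato then gives nonnegativity precisely for $p\geq 2-\tfrac{1}{n-1}$; the flux on $\{\ffi=c\}$ is $\int w^{p-1}\Ho_g/\sinh(c)$, proportional to $U_p'$.

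Two smaller points. First, in the rigidity analysis you treat only the "all inequalities are equalities" scenario as if it always forces the warped-product/umbilic picture; in fact for $p>2-\tfrac{1}{n-1}$ the vanishing of the bulk integral directly gives $\nana\ffi\equiv 0$ and one splits immediately, and it is only in the borderline case $p=2-\tfrac{1}{n-1}$ that equality in refined Kato must be exploited (the paper does this via the Bour--Carron warped-product characterization, then applies the integral identity on \emph{every} level set above $s_0$ to force $\Ho_g\equiv 0$ and hence $|\na\ffi|_g$ constant); your "ODE analysis" would need to be fleshed out along these lines. Second, your appeal to "unique continuation" to globalize is fine in spirit — the paper uses analyticity of static metrics — but the splitting itself, not unique continuation, is what identifies the end with the round cylinder, via the asymptotically cylindrical behavior of $g$.
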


\subsection{Strategy of the proof.} 
\label{sub:strategy}
To describe the strategy of the proof, we focus our attention on the rigidity statement (see Theorem~\ref{thm:main}-(iii)) and for simplicity, we let $p=3$. At the same time, we provide an heuristic for the the monotonicity statement.
The method employed 
is based on the conformal splitting technique introduced by the authors in~\cite{Ago_Maz_1}, which consists of two main steps. 
The first step is the construction of the so called {\em cylindrical ansatz} and amounts to find an appropriate conformal deformation $g$ of the {\em static metric} $g_0$ in terms of the {\em static potential} $u$. In the case under consideration, the natural deformation is given by
\begin{equation*}
g \, = \, ({1-u^2})^{\frac2{n-2}} \, \go \,.
\end{equation*}
In fact, when $(M,g_0,u)$ is the {\em Schwarzschild solution}, the metric $g$ obtained through the above formula is immediately seen to be the cylindrical one. In general, the {\em cylindrical ansatz} leads to a conformal reformulation of  problem~\eqref{eq:pb_static_vacuum} in which the conformally related metric $g$ obeys the quasi-Einstein type equation
\begin{equation*}
\Ricg \, - \, \coth (\ffi) \nana\ffi \, + \, \frac{d\ffi\otimes d\ffi}{n-2}
\,\, = \,\,
\frac{|\na\ffi|^2_g}{n-2}\,\g \,,  \qquad \hbox{in $M$} ,
\end{equation*}
where $\na$ is the Levi-Civita connection of $g$ and the function $\ffi \, =\, \log\left[({1+u})/({1-u}) \right]$ is harmonic with respect to the Laplace-Beltrami operator of the metric $g$, namely
\begin{equation*}
\Delta_g \, \ffi \, = \, 0 \, , \quad \hbox{in $M$} .
\end{equation*}
Before proceeding, it is worth pointing out that taking the trace of the quasi-Einstein type equation gives
\begin{equation*}
\frac{\RRR_g}{n-1} \,= \, \frac{|\na \ffi|_g^2}{n-2} \, ,
\end{equation*}
where $\RRR_g$ is the scalar curvature of the conformal metric $g$. On the other hand, it is easy to see that $|\na \ffi|_g^2$ is proportional to the first term in~\eqref{eq:Pfunct_confvol}.
In fact, if $(M,g_0)$ is a {\em Schwarzschild solution}, then $(M,g)$ is a round cylinder with constant scalar curvature. Furthermore, the second term appearing in~\eqref{eq:Pfunct_confvol} is (proportional to) the hypersurface area of the level sets of $\ffi$ computed with respect to the metric induced on them by $g$. Again, in the cylindrical situation such a function is expected to be constant.

The second step of our strategy consists in proving via a splitting principle that the metric $g$ has indeed a product structure, provided the hypotheses of the Rigidity statement are satisfied. More precisely, we use the above conformal reformulation of the original system combined with the Bochner identity to deduce the equation
\begin{equation*}
\Delta_\g|\na \ffi|_\g^{2}\, - \, \big\langle\na|\na \ffi|^{2}_\g \, \big| \,\na \log \big( \sinh( \ffi ) \big) 
\big\rangle_{\!\!\g} \, = \,  2  \,  
  \big|\nana \ffi\big|_\g^2
  \,.
\end{equation*}
Observing that the drifted Laplacian appearing on the left hand side is formally self-adjoint with respect to the weighted measure $(1/\sinh(\ffi)) \rmd \mu_g$, we integrate by parts and we obtain, for every $s\geq \ffi_0 = \log\left[({1+u_0})/({1-u_0}) \right]$, the integral identity
\begin{equation*}
\int\limits_{\{\ffi=s\}} \!\!
\frac{   |\na \ffi|_\g^{2}\,\HHH_g }{ \sinh(s)  }
\,\,\rmd\sigma_{\!g}  
\,\, =  \!\!
\int\limits_{\{\ffi> s\}} \!\!
\frac{   
\big|\nana \ffi\big|_\g^2
   }{\sinh(\ffi)}
\,\,\rmd\mu_g \, ,
\end{equation*}
where $\Hg$ is the mean curvature of the level set $\{ \ffi = s \}$ inside the ambient $(M,g)$ (notice that the same considerations as in Remark~\ref{rem:uno} apply here). We then observe that, up to a negative function of $s$, the left hand side coincides with $U_3'$ (see formul\ae\ \eqref{eq:upfip2} and~\eqref{eq:der_fip}), whereas the right hand side is always nonnegative. This implies the Monotonicity statement.
Also, under the hypotheses of the Rigidity statement, the left hand side of the  above identity vanishes and thus the Hessian of $\ffi$ must be zero in an open region of $M$. In turn, by analyticity, it vanishes everywhere. On the other hand, the asymptotic behavior of $u$ implies that $\ffi \to + \infty$ along the end of $M$. In particular, $\na \ffi$ is a nontrivial parallel vector field. Hence, it provides a natural splitting direction for the metric $g$. Finally, using the fact that $g_0$ is {\em asymptotically flat}, it is easy to realize that the asymptotics of $g$ are the ones of a round cylinder, so that the product structure forces the Riemannian manifold $(M,g)$ to be isometric to a round cylinder.

\subsection{Summary.}
The paper is organized as follows. In Section~\ref{sec:conseq} we describe the geometric consequences of Theorem~\ref{thm:main}, obtaining several sharp inequalities for which the equality is satisfied if and only if the solution to system~\eqref{eq:pb_static_vacuum} is rotationally symmetric. We distinguish the consequences of Theorem~\ref{thm:main}-(ii) on the geometry of a generic level set of $u$ (see Subsection~\ref{sub:rot_sym}), from the consequences of Theorem~\ref{thm:main}-(iii) on the geometry of the boundary of $M$ under null Dirichlet boundary conditions for $u$ (see Subsection~\ref{sub:further}). The results in these two subsections have a precise correspondence to each other so that, for example, Theorem~\ref{thm:main2_geom} corresponds to Theorem~\ref{thm:rig_gen}, Corollary~\ref{cor:Lp} corresponds to Corollary~\ref{cor:R_Lp}, Theorem~\ref{thm:ul_bounds} corresponds to Theorem~\ref{thm:ul_bounds_bound} and so on.
As it is evident from the statements of these theorems and corollaries, the role of the ratio $\HHH/(n-1)$ in Subsection~\ref{sub:rot_sym} is played in Subsection~\ref{sub:further} by the square route of the ratio $\RRR^{\pa M}/(n-1)(n-2)$, where $\RRR^{\pa M}$ is the scalar curvature of the metric $g_{|\pa M}$ induced by $g_0$ on the boundary. To illustrate this phenomenon, we observe that, in the framework of overdetermined elliptic boundary value problems, Corollary~\ref{cor:over} states that if the condition 
\begin{equation*}
\left(\frac{u}{1-u^2}\right) \,  \frac{\,\, 2 \, |\D u|\,\,}{n-2} \,\, = \,\, \frac{\HHH}{n-1}	
\end{equation*}
is satisfied on some level set of $u$, then the solution $(M,g_0,u)$ to system~\eqref{eq:pb_static_vacuum} must be rotationally symmetric. In the case where $u_0=0$, since the boundary of $M$ is totally geodesic, the above condition is always satisfied at $\pa M$ and thus does not imply in general any rigidity of the solution. The relevant overdetermining condition in this case is the one given in Corollary~\ref{cor:over2}, namely
\begin{equation*}
\left(\frac{\,2\, |\D u|\, }{n-2} \right)^{\!\!2} \,= \,\frac{\quad\RRR^{\pa M}}{(n-1)(n-2)}	\,.
\end{equation*}
In Subsection~\ref{sub:further}, assuming the connectedness of $\pa M$ we deduce a $n$-dimensional version of the Riemannian Penrose Inequality for static solutions (see Theorem~\ref{thm:ul_bounds_bound}-(iii)) as well as the classical $3$-dimensional Black Hole Uniqueness Theorem (see Theorem~\ref{thm:bhu}). We finally discuss in Theorem~\ref{thm:bhun} a geometric condition under which the uniqueness statement holds in every dimension $n \geq 4$.
For $n \geq 4$, we also derive Willmore-type inequalities for the level sets of $u$ in Subsection~\ref{sub:will}.

In Section~\ref{sec:conf_reform}, we reformulate problem~\eqref{eq:pb_static_vacuum} in terms of an asymptotically cylindrical quasi-Einstein type metric $g$ and a $g$-harmonic function $\ffi$ satisfying system~\eqref{eq:pb_schw_reform} ({\em cylindrical ansatz}), according to the strategy described in Subsection~\ref{sub:strategy}. In this context, Theorem~\ref{thm:main} and Theorem~\ref{thm:refined} are respectively equivalent to Theorem~\ref{thm:main_conf} and Theorem~\ref{thm:refined_conf} in Subsection~\ref{sub:reform} below. These latter statements will be proven in Section~\ref{sec:proofs} with the help of the integral identities obtained in Section~\ref{sec:integral}.

\subsection{Further directions.} Extending the ideas presented in~\cite{Ago_Maz_1},
one can develop a theory analogous to the one described in this paper in the case of classical potential theory, where 
problem~\eqref{eq:pb_static_vacuum} is replaced by 
\begin{equation*}
\left\{
\begin{array}{rcll}
\displaystyle
\De u\!\!\!\!&=&\!\!\!\!0 & {\rm in }\quad\R^n\setminus\overline\Om,\\
\displaystyle
  u\!\!\!\!&=&\!\!\!\!1  &{\rm on }\ \ \pa\Om,\\
\displaystyle
u(x)\!\!\!\!&\to&\!\!\!\!0 & \mbox{as }\ |x|\to\infty,
\end{array}
\right.
\end{equation*}
and $\Omega$ is a bounded domain with regular boundary. For the sake of clearness, we decided to present these results in a separated paper. Another direction of research is to investigate possible applications of these ideas to the study of photon spheres in asymptotically flat static vacuum spacetimes, in the spirit of~\cite{Ced}.


\section{Consequences of the Monotonicity-Rigidity Theorem.}
\label{sec:conseq}


In this section we discuss some consequences of Theorem~\ref{thm:main}, distinguishing the general case (see Subsection~\ref{sub:rot_sym}) from the case where $u=0$ at $\pa M$ (see Subsection~\ref{sub:further}). It is worth noticing that, thanks to Theorem~\ref{thm:refined}, analogous corollaries hold on the end of $M$ for a larger range of $p$'s. However, for the sake of simplicity, we only describe the consequences of the Monotonicity-Rigidity Theorem~\ref{thm:main}. 
 
\subsection{Characterizations of the rotationally symmetric solutions.}
\label{sub:rot_sym}
Since, as already observed, the functions $t \mapsto U_p(t)$ defined in~\eqref{eq:Up} are constant on a Schwarzschild solution, we obtain, as an immediate consequence of Theorem~\ref{thm:main} and formula~\eqref{eq:derup}, the following characterizations of the rotationally symmetric solutions to  system~\eqref{eq:pb_static_vacuum}.
\begin{theorem}
\label{thm:main2_geom}
Let $(M,g_0,u)$ be an asymptotically flat solution to problem~\eqref{eq:pb_static_vacuum} in the sense of Definition~\ref{ass:AF} with $0 \leq u_0<1$ and ADM mass equal to $m>0$. Then, for every $p \geq 3$ and every $t \in [u_0, 1)$, the inequality 
\begin{equation}
\label{eq:int_ineq}
\frac{t}{1-t^2} \!\!\! \int\limits_{\{u=t\}} \!\!\!\frac{ \,\,2 \, |\D u|^p}{n-2} \, \rmd \sigma \,\,\, \leq  \!\int\limits_{\{u=t\}} \!\!\! |\D u|^{p-1} \frac{\HHH}{n-1} \, \rmd \sigma 
\end{equation}
holds true, where $\HHH$ is the mean curvature of the level set $\{u = t \}$. Moreover, the equality is fulfilled for some $p\geq 3$ and some $t \in [ u_0, 1) \cap (0,1)$ if and only if the static solution $(M,g_0,u)$ is isometric to a Schwarzschild solution with ADM mass equal to $m>0$.
\end{theorem}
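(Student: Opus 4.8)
The plan is to deduce Theorem~\ref{thm:main2_geom} directly from part (ii) of Theorem~\ref{thm:main}, essentially by rewriting formula~\eqref{eq:derup}. The elementary observation that makes this work is that on a level set $\{u=t\}$ the quantity $u/(1-u^2)$ is identically equal to the constant $t/(1-t^2)$; hence the bracketed integrand in~\eqref{eq:derup} becomes, on $\{u=t\}$,
\[
\HHH - 2\Big(\frac{n-1}{n-2}\Big)\,\frac{t\,|\DDD u|}{1-t^2}\,.
\]
First I would record that the prefactor $(p-1)\big(2m/(1-t^2)\big)^{(p-1)(n-1)/(n-2)}$ appearing in~\eqref{eq:derup} is strictly positive for every $p\geq 3$ and every $t\in[u_0,1)$, using that $m>0$ by hypothesis and $1-t^2>0$ since $t\in[0,1)$. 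Therefore the inequality $U_p'(t)\leq 0$ supplied by Theorem~\ref{thm:main}-(ii) is equivalent to
\[
\int\limits_{\{u=t\}}\!\! |\DDD u|^{p-1}\Big[\,\HHH - 2\Big(\frac{n-1}{n-2}\Big)\,\frac{t\,|\DDD u|}{1-t^2}\,\Big]\,\rmd\sigma \,\,\geq\,\, 0\,,
\]
and dividing by $n-1$ and transferring the second summand to the right-hand side produces precisely~\eqref{eq:int_ineq}. The well-posedness of all the integrals here, also across the at most $(n-2)$-dimensional critical set of $u$, is already granted by the discussion in Remark~\ref{rem:uno}, so no regularity subtlety needs to be revisited.

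For the rigidity statement I would argue in two directions. If equality holds in~\eqref{eq:int_ineq} for some $p\geq 3$ and some $t\in[u_0,1)\cap(0,1)$, then by the identity above $U_p'(t)=0$, and Theorem~\ref{thm:main}-(ii) immediately gives that $(M,g_0,u)$ is isometric to a Schwarzschild solution with ADM mass $m$. Conversely, on a Schwarzschild solution the functions $t\mapsto U_p(t)$ are constant — this is exactly what formula~\eqref{eq:Up2}, together with the two constancy statements in~\eqref{eq:Pfunct_confvol}, yields — so $U_p'\equiv 0$ there and, unwinding the equivalence above, \eqref{eq:int_ineq} holds with equality for every $p$ and every $t$. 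This closes the characterization.

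I do not expect a genuine obstacle: the statement is a transcription of Theorem~\ref{thm:main}-(ii), and the only two points deserving a line of care are the positivity of the prefactor (hence the role of the hypothesis $m>0$, which is in turn consistent with the maximum principle and the expansion~\eqref{eq:uexp}) and the exclusion of $t=0$ in the rigidity part. The latter is necessary because when $u_0=0$ the boundary $\pa M$ is totally geodesic, so at $t=0$ both sides of~\eqref{eq:int_ineq} vanish identically and the equality carries no information; the sharp overdetermining condition at a null boundary is instead the curvature identity in Theorem~\ref{thm:main}-(iii), which will be treated separately.
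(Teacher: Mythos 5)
Your proposal is correct and follows essentially the same route as the paper, which likewise obtains Theorem~\ref{thm:main2_geom} as an immediate consequence of Theorem~\ref{thm:main}-(ii): the monotonicity inequality in formula~\eqref{eq:derup} (with $u/(1-u^2)=t/(1-t^2)$ on the level set and the positive prefactor removed) gives~\eqref{eq:int_ineq}, the rigidity clause of Theorem~\ref{thm:main}-(ii) handles the equality case for $t\in(0,1)$, and the converse follows from the constancy of $t\mapsto U_p(t)$ on the Schwarzschild solution via~\eqref{eq:Pfunct_confvol} and~\eqref{eq:Up2}. No gaps.
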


To give an interpretation of Theorem~\ref{thm:main} in the framework of overdetermined boundary value problems, we observe that the equality is achieved in~\eqref{eq:derup} as soon as the term in square brackets vanishes $\mathscr{H}^{n-1}$-almost everywhere on some level set of $u$. This easily implies the following corollary.
\begin{corollary}
\label{cor:over}
Let $(M,g_0,u)$ be an asymptotically flat solution to problem~\eqref{eq:pb_static_vacuum} in the sense of Definition~\ref{ass:AF} with $0 \leq u_0<1$ and ADM mass equal to $m>0$. Assume in addition that the identity
\begin{equation}
\label{eq:overd}
\Big(\frac{u}{1-u^2}\Big) \,  \frac{\,\, 2 \, |\D u|\,\,}{n-2} \,\, = \,\, \frac{\HHH}{n-1}	
\end{equation}
holds $\mathscr{H}^{n-1}$-almost everywhere on some level set $\{ u = t\}$, with $t \in [ u_0, 1) \cap (0,1)$. Then, the static solution $(M,g_0,u)$ is isometric to a Schwarzschild solution with ADM mass equal to $m>0$.
\end{corollary}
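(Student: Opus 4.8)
The plan is to obtain Corollary~\ref{cor:over} as a direct specialization of Theorem~\ref{thm:main}-(ii), without any new geometric input. First, I would fix the level value $t \in [u_0,1) \cap (0,1)$ on which the overdetermining identity~\eqref{eq:overd} is assumed to hold $\mathscr{H}^{n-1}$-almost everywhere, pick any exponent $p \geq 3$ (say $p=3$), and examine the integrand of the derivative formula~\eqref{eq:derup}. The bracketed factor there is precisely
\begin{equation*}
\HHH - 2\Big(\frac{n-1}{n-2}\Big)\frac{u\,|\DDD u|}{1-u^2},
\end{equation*}
and multiplying the assumed identity~\eqref{eq:overd} through by $(n-1)$ shows that this bracket vanishes $\mathscr{H}^{n-1}$-almost everywhere on $\{u=t\}$. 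Since the remaining factor $|\DDD u|^{p-1}$ and the conformal prefactor are fixed along the level set, the integrand in~\eqref{eq:derup} is zero $\mathscr{H}^{n-1}$-a.e.\ on $\{u=t\}$, and hence $U_p'(t) = 0$.

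Next, I would invoke the rigidity clause of Theorem~\ref{thm:main}-(ii): the existence of a single $t \in [u_0,1)\cap(0,1)$ with $U_p'(t)=0$ for some $p \geq 3$ forces $(M,g_0,u)$ to be isometric to a Schwarzschild solution with the same ADM mass $m>0$. This immediately yields the conclusion. The only point requiring a word of care is the measure-theoretic one, namely that $\{u=t\}$ may fail to be a regular level set; but this is exactly the content of Remark~\ref{rem:uno}, which guarantees that the integrand in~\eqref{eq:derup} is well defined and summable $\mathscr{H}^{n-1}$-almost everywhere and that the integral makes sense, so that ``the bracket vanishes a.e.'' legitimately implies ``$U_p'(t)=0$''.

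I do not anticipate a genuine obstacle here: the corollary is essentially a reading of the equality case of the sharp inequality $U_p'(t) \leq 0$, and the whole content is the observation that~\eqref{eq:overd} is precisely the pointwise vanishing of the bracket in~\eqref{eq:derup}. If anything, the subtle step is making sure the rigidity hypothesis of Theorem~\ref{thm:main}-(ii) is applicable, i.e.\ that the chosen $t$ lies in $(0,1)$ (so $u$ does not vanish on that level set) — this is guaranteed by the hypothesis $t \in [u_0,1)\cap(0,1)$ in the statement. One could alternatively phrase the argument through Theorem~\ref{thm:main2_geom}, noting that~\eqref{eq:overd} forces equality in~\eqref{eq:int_ineq} for that $t$ (and every $p\geq 3$, or just $p=3$); the rigidity part of that theorem then gives the same conclusion. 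I would present the first route since it is the most transparent.
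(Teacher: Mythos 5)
Your proposal is correct and follows exactly the paper's argument: the paper observes that equality in~\eqref{eq:derup} holds as soon as the bracketed term vanishes $\mathscr{H}^{n-1}$-a.e.\ on a level set, which is precisely the content of~\eqref{eq:overd} after multiplication by $(n-1)$, and then invokes the rigidity clause of Theorem~\ref{thm:main}-(ii). Your additional remarks on the measure-theoretic well-posedness via Remark~\ref{rem:uno} are consistent with the paper's treatment.
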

In other words, assumption~\eqref{eq:overd} in the previous corollary can be seen as a condition that makes system~\eqref{eq:pb_static_vacuum} overdetermined and forces the solution to be rotationally symmetric. Observe that~\eqref{eq:overd} is always satisfied on a Schwarzschild solution and thus it is also a necessary condition for $(M,g_0, u)$ being rotationally symmetric.

To illustrate other implications of Theorem~\ref{thm:main2_geom}, let us observe 
that, applying H\"older inequality to the right hand side of~\eqref{eq:int_ineq} with conjugate exponents $p/(p-1)$ and $p$, one gets
\begin{equation*}
\int\limits_{\{u=t\}} \!\!\! |\D u|^{p-1} {\HHH} \, \rmd \sigma  \,\, \leq \,\, \Bigg(  \int\limits_{\{u=t\}} \!\!\! |\D u|^{p}  \, \rmd \sigma \Bigg)^{\!\!{(p-1)}/{p}} 
\Bigg(  \int\limits_{\{u=t\}} \!\!\! |\HHH|^{p}  \, \rmd \sigma \Bigg)^{\!\!{1}/{p}} \,.
\end{equation*}                             
This implies on every level set of $u$ the following sharp $L^p$-bound for the gradient of the {\em static potential} in terms of the $L^p$-norm of the mean curvature of the level set.
\begin{corollary}
\label{cor:Lp}
Let $(M,g_0,u)$ be an asymptotically flat solution to problem~\eqref{eq:pb_static_vacuum} in the sense of Definition~\ref{ass:AF} with $0 \leq u_0<1$ and ADM mass equal to $m>0$. Then, for every $p \geq 3$ and every $t \in [u_0, 1)$ the inequality
\begin{equation}
\label{eq:Lp_ineq}
\frac{t}{1-t^2} \,\,\,  \left|\left|  \frac{\, 2 \, \D u \, }{n-2} \right|\right|_{L^p(\{ u=t \})}  \!\!\!\!\! \leq \,\,\,\, \left|\left|  \frac{\HHH}{n-1} \right|\right|_{L^p(\{ u=t \})} , 
\end{equation}
holds true, where $\HHH$ is the mean curvature of the level set $\{u = t \}$. Moreover, the equality is fulfilled for some $p \geq 3$ and some $t \in [ u_0, 1) \cap (0,1)$ if and only if the static solution $(M,g_0,u)$ is isometric to a Schwarzschild solution with ADM mass equal to $m>0$.
\end{corollary}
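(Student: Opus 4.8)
The plan is to obtain Corollary~\ref{cor:Lp} as a direct consequence of Theorem~\ref{thm:main2_geom}, precisely along the lines sketched in the paragraph preceding its statement. First I would recall inequality~\eqref{eq:int_ineq}: for $p \geq 3$ and $t \in [u_0,1)$ one has
\[
\frac{t}{1-t^2} \int\limits_{\{u=t\}} \frac{2\,|\D u|^p}{n-2}\,\rmd\sigma \;\leq\; \int\limits_{\{u=t\}} |\D u|^{p-1}\,\frac{\HHH}{n-1}\,\rmd\sigma \, .
\]
Since $t \geq 0$, the left-hand side is nonnegative, so nothing is lost by replacing $\HHH$ with $|\HHH|$ on the right. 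Applying H\"older's inequality to $\int_{\{u=t\}} |\D u|^{p-1}|\HHH|\,\rmd\sigma$ with conjugate exponents $p/(p-1)$ and $p$ gives
\[
\int\limits_{\{u=t\}} |\D u|^{p-1}|\HHH|\,\rmd\sigma \;\leq\; \bigg(\int\limits_{\{u=t\}} |\D u|^{p}\,\rmd\sigma\bigg)^{\!\!(p-1)/p}\bigg(\int\limits_{\{u=t\}} |\HHH|^{p}\,\rmd\sigma\bigg)^{\!\!1/p} .
\]
Combining the two displays and dividing through by $\big(\int_{\{u=t\}} |\D u|^p\,\rmd\sigma\big)^{(p-1)/p}$ — a legitimate operation because $\int_{\{u=t\}}|\D u|\,\rmd\sigma = m(n-2)|\Sph^{n-1}|>0$ by~\eqref{eq:U1_mass}, so $|\D u|$ cannot vanish $\mathscr{H}^{n-1}$-almost everywhere on $\{u=t\}$ — one recovers~\eqref{eq:Lp_ineq} after rearranging the constants.

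For the rigidity claim, if equality holds in~\eqref{eq:Lp_ineq} for some $p\geq 3$ and some $t \in [u_0,1)\cap(0,1)$, then in particular equality must hold in~\eqref{eq:int_ineq} for that pair $(p,t)$, and Theorem~\ref{thm:main2_geom} forces $(M,g_0,u)$ to be isometric to a Schwarzschild solution with ADM mass $m$. Conversely, on a Schwarzschild solution the bracket in~\eqref{eq:derup} vanishes identically — equivalently, the overdetermining identity~\eqref{eq:overd} holds everywhere — so~\eqref{eq:int_ineq} becomes an equality; moreover $|\D u|$ and $\HHH$ are both constant on each level set $\{u=t\}$ (and $\HHH\geq 0$ there, by~\eqref{eq:overd}), hence H\"older's inequality is saturated and the passage from $\HHH$ to $|\HHH|$ costs nothing, so~\eqref{eq:Lp_ineq} is an equality. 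This settles both directions.

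There is no substantial obstacle in this argument: the only points deserving care are the replacement of $\HHH$ by $|\HHH|$ (harmless since $t\geq 0$ makes the left-hand side of~\eqref{eq:int_ineq} nonnegative), the non-degeneracy used to divide by the $L^p$-norm of $\D u$ (guaranteed by~\eqref{eq:U1_mass}), and the $\mathscr{H}^{n-1}$-summability of the integrands on possibly singular level sets, which is exactly what Remark~\ref{rem:uno} records.
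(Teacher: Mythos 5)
Your proposal is correct and follows essentially the same route as the paper: inequality~\eqref{eq:int_ineq} from Theorem~\ref{thm:main2_geom} combined with H\"older's inequality with exponents $p/(p-1)$ and $p$, and the rigidity statement inherited from the equality case of Theorem~\ref{thm:main2_geom}. The extra care you take (replacing $\HHH$ by $|\HHH|$ using $t\geq 0$, justifying the division via~\eqref{eq:U1_mass}, and checking the converse on Schwarzschild) only makes explicit what the paper leaves implicit.
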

It is worth pointing out that the right hand side in~\eqref{eq:Lp_ineq} may possibly be unbounded. However, for regular level sets of the {\em static potential} the $L^p$-norm of the mean curvature is well defined and finite (see Remark~\ref{rem:uno}). We also observe that letting $p \to + \infty$, we deduce, under the same hypothesis of Corollary~\ref{cor:Lp}, the following $L^\infty$-bound
\begin{equation}
\label{eq:Linf_ineq}
\frac{t}{1-t^2} \,\,\,  \left|\left|  \frac{\, 2 \, \D u\, }{n-2} \right|\right|_{L^\infty(\{ u=t \})}  \!\!\!\!\! \leq \,\,\,\, \left|\left|  \frac{\HHH}{n-1} \right|\right|_{L^\infty(\{ u=t \})}  ,
\end{equation}
for every $t \in [u_0, 1)$. Unfortunately, in this case we do not know whether the rigidity statement holds true or not. However, the equality is satisfied on a Schwarzschild solution with ADM mass equal to $m>0$ and this makes the inequality sharp.

We are now in the position to deduce sharp upper and lower bounds for the ADM mass $m>0$ of a {\em static solution} $(M,g_0,u)$. This will be done combining inequality~\eqref{eq:Lp_ineq} in Corollary~\ref{cor:Lp} with the simple observation that for every $t \in [u_0,1)$ and every $p \geq 3$ it holds
\begin{equation}
\label{eq:mon_limup}
U_p(t) \, \geq \, m^p (n-2)^p |\Sph^{n-1}| \,,  
\end{equation}
where the latter estimate follows immediately from~\eqref{eq:limup} and Theorem~\ref{thm:main}-(ii). To state the result, it is convenient to set, for every $f\in L^p(\{u=t \})$, 
\begin{equation}
\label{def:av_norm}
|| f ||_{L^p_0(\{ u= t\})} \, = \, \Bigg[ \frac{1}{|\{ u=t \}|} \int\limits_{\{ u=t\}} \!\!\! |f|^p \, \rmd \sigma  \Bigg]^{1/p} \, ,
\end{equation}
where $|\{ u=t \}|$ denotes the $\mathscr{H}^{n-1}$-measure of the level set $\{u=t \}$.
\begin{theorem}
\label{thm:ul_bounds}
Let $(M,g_0,u)$ be an asymptotically flat solution to problem~\eqref{eq:pb_static_vacuum} in the sense of Definition~\ref{ass:AF} with $0 \leq u_0<1$ and ADM mass equal to $m>0$. Then the following statements hold true.
\begin{itemize}
\item[(i)] For every $p \geq 3$ and every $t \in [u_0, 1)$ the inequalities 
\begin{equation}
\label{eq:pi_gen}
\phantom{\quad\quad} \frac{1-t^2}{2} \,\,  K(n,p,t) \,\,\left({   \frac{|\{u=t\}| }{|\Sph^{n-1}|} } \right)^{\!\!\frac{n-2}{n-1}} \!\! \leq  \,\,m \,\, \leq \,\,  \frac{1-t^2}{2t}  \,  \left|\left|   \frac{\HHH}{n-1} \right|\right|_{L^p_0(\{ u=t \})}  \left({   \frac{|\{u=t\}| }{|\Sph^{n-1}|} } \right) \, 
\end{equation}
hold true, where $|\{u=t\}|$ is the $\mathscr{H}^{n-1}$-measure of the level set of $u$ and 
$$
K (n,p,t) \, = \, \left[ \frac{ || \D u||_{L_0^1( \{u=t\})}   }{  || \D u||_{L_0^p(\{u=t\})}  } \right]^{\!\frac{p(n-2)}{(p-1)(n-1)}} \!\!\!.
$$
Moreover, the equality holds in either the first or in the second inequality, for some $t \in[u_0,1) \cap (0,1)$ and some $p \geq 3$, if and only if the static solution $(M,g_0,u)$ is isometric to a Schwarzschild solution with ADM mass equal to $m>0$. 

\smallskip

\item[(ii)] Assume that for some $t \in[u_0,1) \cap (0,1)$ and some $p \geq 3$ the mean curvature $\HHH$ of the level set $\{u=t \}$ satisfies the bound
\begin{equation}
\label{eq:H_bound}
 \left|\left|   \frac{\HHH}{n-1} \right|\right|_{L^p_0(\{ u=t \})} \leq \,\, t \,\, K(n,p,t) \, \left({   \frac{|\Sph^{n-1}|}{|\{u=t\}| } } \right)^{\!\!\frac{1}{n-1}}\, .
\end{equation}
Then the static solution $(M,g_0,u)$ is isometric to a Schwarzschild solution with ADM mass equal to $m>0$. 
\end{itemize}
\end{theorem}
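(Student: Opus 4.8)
The plan is to derive Theorem~\ref{thm:ul_bounds} as a fairly direct consequence of the material already in place, namely Corollary~\ref{cor:Lp}, the monotonicity estimate~\eqref{eq:mon_limup}, and the constancy of $U_1$ expressed in~\eqref{eq:U1_mass}. Everything reduces to rewriting the relevant integral quantities in terms of the normalized $L^p$-norms~\eqref{def:av_norm} and then chasing exponents. First I would fix $t \in [u_0,1) \cap (0,1)$ and $p \geq 3$, and record the two facts at hand: the sharp gradient bound
\begin{equation*}
\frac{t}{1-t^2} \,\, \Big|\Big|  \frac{\, 2 \, \D u \, }{n-2} \Big|\Big|_{L^p(\{ u=t \})}  \, \leq \,\, \Big|\Big|  \frac{\HHH}{n-1} \Big|\Big|_{L^p(\{ u=t \})}
\end{equation*}
from Corollary~\ref{cor:Lp}, and the inequality $U_p(t) \geq m^p(n-2)^p|\Sph^{n-1}|$ from~\eqref{eq:mon_limup}. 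The second fact, unwinding the definition~\eqref{eq:Up} of $U_p$, says precisely that $\big(\tfrac{2m}{1-t^2}\big)^{(p-1)(n-1)/(n-2)} \, || \D u||^p_{L^p(\{u=t\})} \geq m^p(n-2)^p|\Sph^{n-1}|$.

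For the upper bound on $m$ in~\eqref{eq:pi_gen}, I would pass from the $L^p$-norm to the normalized $L^p_0$-norm on both sides of the Corollary~\ref{cor:Lp} inequality, which introduces a factor $|\{u=t\}|^{1/p}$ that cancels, giving
\begin{equation*}
\frac{t}{1-t^2} \,\, \frac{2}{n-2}\,|| \D u ||_{L^p_0(\{u=t\})} \, \leq \, \Big|\Big|  \frac{\HHH}{n-1} \Big|\Big|_{L^p_0(\{ u=t \})} \, .
\end{equation*}
Then I would invoke~\eqref{eq:U1_mass} in normalized form, $|| \D u ||_{L^1_0(\{u=t\})} = m(n-2)|\Sph^{n-1}|/|\{u=t\}|$, to eliminate $|| \D u ||_{L^1_0}$; combining with the displayed inequality and with $|| \D u ||_{L^1_0} \leq || \D u ||_{L^p_0}$ (Jensen), a short manipulation isolates $m$ from above in terms of $|| \HHH/(n-1) ||_{L^p_0}$ and $|\{u=t\}|/|\Sph^{n-1}|$, which is exactly the second inequality in~\eqref{eq:pi_gen}. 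For the lower bound on $m$, I would instead start from $U_p(t) \geq m^p(n-2)^p|\Sph^{n-1}|$ rewritten as above, take $p$-th roots to get a lower bound on $|| \D u ||_{L^p_0(\{u=t\})}$ (after inserting the $|\{u=t\}|^{-1/p}$ normalization factor), and then use $|| \D u ||_{L^1_0(\{u=t\})} = m(n-2)|\Sph^{n-1}|/|\{u=t\}|$ together with the definition of $K(n,p,t)$ to convert this into the stated lower bound $\tfrac{1-t^2}{2} K(n,p,t)\big(|\{u=t\}|/|\Sph^{n-1}|\big)^{(n-2)/(n-1)} \leq m$; the exponent $p(n-2)/((p-1)(n-1))$ in the definition of $K$ is precisely what makes the powers of $|\{u=t\}|$ match up. The rigidity clause follows because equality in either inequality forces equality either in Corollary~\ref{cor:Lp} (whence $(M,g_0,u)$ is Schwarzschild) or in~\eqref{eq:mon_limup}, i.e.\ $U_p(t) = \lim_{s\to 1^-} U_p(s)$, so that $U_p$ is constant on $[t,1)$ and hence $U_p'$ vanishes somewhere, again forcing Schwarzschild by Theorem~\ref{thm:main}-(ii).

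Part~(ii) is then essentially a pigeonhole argument: under hypothesis~\eqref{eq:H_bound}, the upper bound for $m$ produced in part~(i) becomes $m \leq \tfrac{1-t^2}{2} K(n,p,t)\big(|\{u=t\}|/|\Sph^{n-1}|\big)^{(n-2)/(n-1)}$, which is exactly the lower bound for $m$ from part~(i); hence both inequalities in~\eqref{eq:pi_gen} are simultaneously equalities, and the rigidity statement of part~(i) applies to conclude that $(M,g_0,u)$ is Schwarzschild. I expect the only real obstacle to be bookkeeping: keeping the three different exponents — the H\"older conjugates $p/(p-1)$ and $p$ hidden inside Corollary~\ref{cor:Lp}, the scaling weight $(p-1)(n-1)/(n-2)$ coming from the definition of $U_p$, and the composite exponent defining $K(n,p,t)$ — consistent throughout, and in particular verifying that the powers of $|\{u=t\}|/|\Sph^{n-1}|$ and of $1-t^2$ collapse to exactly those written in~\eqref{eq:pi_gen}. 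There is no new geometric input needed beyond what Theorem~\ref{thm:main} already supplies.
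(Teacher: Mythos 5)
Your proposal is correct and follows essentially the same route as the paper: the lower bound on $m$ comes from rewriting~\eqref{eq:mon_limup} via~\eqref{eq:U1_mass} in normalized $L^p_0$ form (the paper's inequality~\eqref{eq:L1_Lp}), the upper bound comes from Jensen plus~\eqref{eq:U1_mass} followed by Corollary~\ref{cor:Lp}, and the rigidity and part~(ii) are handled exactly as you describe (equality forcing either equality in Corollary~\ref{cor:Lp} or constancy of $U_p$, and~\eqref{eq:H_bound} collapsing~\eqref{eq:pi_gen} to a chain of equalities). The exponent bookkeeping you flag does work out as claimed, so nothing further is needed.
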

\begin{remark}
The first inequality in~\eqref{eq:pi_gen} should be compared with the classical Riemannian Penrose Inequality, as described in Theorem~\ref{thm:ul_bounds_bound} below. In particular, the rigidity statement continues to hold 
for this inequality also when the equality is achieved on the level set $\{u=0\}$, if present.
\end{remark}
\begin{proof}
To obtain the first inequality in~\eqref{eq:pi_gen} it is sufficient to use~\eqref{eq:U1_mass} to rewrite the right hand side of~\eqref{eq:mon_limup} as $|| \D u||^p_{L^1(\{ u= t \})} |\Sph^{n-1}|^{1-p}$. This leads to 
\begin{equation}
\label{eq:L1_Lp}
{ || \D u||_{L^1_0( \{u=t\})}   }\,\, \leq \,\, 
\left[ \Big( \frac{2m}{1-t^2}  \Big)^{\!\frac{n-1}{n-2} }   \bigg(\frac{|\Sph^{n-1}|}{|\{ u= t\}|}\bigg) \, \right]^{\!\frac{p-1}{p}}  \!\! {  || \D u||_{L^p_0(\{u=t\})}   }   \, ,
\end{equation}
and, after some algebra, to the desired inequality. In this case, the rigidity statement follows from the fact that as soon as the equality is achieved in~\eqref{eq:mon_limup}, Theorem~\ref{thm:main} implies the rotational symmetry of the solution. To obtain the second inequality in~\eqref{eq:pi_gen}, we observe that, by Jensen Inequality one has that
\begin{equation*}
\int\limits_{\{u=t\}} \!\!\! |\D u|  \, \rmd \sigma \,\, \leq \,\, \big|\{ u=t \}\big|^{\frac{p-1}{p}} \,  || \D u||_{L^p(\{ u=t\})} \, . 
\end{equation*}
Using~\eqref{eq:U1_mass}, this can be rewritten as 
\begin{equation*}
m \,\, \leq \,\, \frac12 \,  \left|\left|  \frac{2 \, \D u}{n-2} \right|\right|_{L^p_0(\{ u=t \})} \!\! \left({   \frac{|\{u=t\}| }{|\Sph^{n-1}|} } \right) \, .
\end{equation*}
The desired inequality with the corresponding rigidity statement is now a straightforward consequence of Corollary~\ref{cor:Lp}. Finally, we observe that statement (ii) follows immediately from statement (i), since  inequality~\eqref{eq:H_bound} implies that the equality is fulfilled in~\eqref{eq:pi_gen}.
\end{proof}

\subsection{Further consequences under null Dirichlet boundary conditions.}
\label{sub:further}
We pass now to describe some consequences of Theorem~\ref{thm:main} in the case where the {\em static potential} satisfies null Dirichlet boundary condition at $\pa M$. In this case, one has that the boundary of $M$ is a totally geodesic hypersurface inside $(M,g_0)$ and $|\D u|$ is constant on every connected component of $\pa M$. In particular, also the mean curvature $\HHH$ vanishes at $\pa M$. Hence, formula~\eqref{eq:derup} implies that
$U_p'(0) = 0$. To illustrate the relevant condition for the rigidity statement in this case (see Theorem~\ref{thm:main}-(iii)), it is convenient to set $\nu = \D u / |\D u|$ and use the identity
$ |\D u| \, \HHH   =  - \, u  \, {\Ric (\nu, \nu)}$
to rewrite~\eqref{eq:derup} as 
\begin{equation*}
U_p'(t)
\,=\,
\,(p-1) \,\, t \,\,  \Big(\frac{2m}{1-t^2}\Big)^{\!\!\!\frac{(p-1)(n-1)}{(n-2)}}\!\!\!\!\!
\int\limits_{\{u=t\}}
\!\!\!
|\DDD u|  ^{p-2}
\!\left[\, \Ric(\nu,\nu) + 
2 \, \Big(\frac{n-1}{n-2}\Big)
\, \frac{ \,|\DDD u|^2}{1-u^2}
\, \right]  \rmd\sigma \,\, \leq \,\, 0 \, . 
\end{equation*}
In particular, for every $p \geq 3$, one has that  
\begin{equation*}
U_p''(0) \, = \,  \lim_{t \to 0^+} \frac{U_p'(t)}{t} \, = \,  (p-1) \, (2m)^{\!\frac{(p-1)(n-1)}{(n-2)}} \!\!\!\int\limits_{\pa M}
\!
  |\DDD u|^{p-2}
\left[\, \Ric(\nu,\nu) + 
2 \, \Big(\frac{n-1}{n-2}\Big)
\, { \,|\DDD u|^2}
\, \right]  \rmd\sigma \, \leq \,0 \,  .  \phantom{\quad}
\end{equation*}
As it will be clear from the forthcoming analysis (see in particular Theorem~\ref{thm:main_conf} and the subsequent Remerk~\ref{rem:der2}), a sufficient condition for the rigidity statement is that the equality is achieved in the above formula, for some $p \geq 3$. 

\begin{remark}
To provide a geometric interpretation of the condition $U_p''(0)=0$, we recall from Subsection~\ref{sub:strategy} that the rigidity case in Theorem~\ref{thm:main}-(iii) corresponds to a Riemannian splitting in the conformal reformulation of the problem in terms of $g=(1-u^2)^{2/(n-2)}$ and $\ffi = \log\left[({1+u})/({1-u}) \right]$. Moreover, the splitting direction will be given by $\na \ffi$. Hence, a necessary condition for the splitting is that $\Ric_g (\nu_g, \nu_g) \equiv 0$ on $M$, where $\nu_g = \na \ffi/|\na \ffi|_g$. On the other hand, the condition $U_p''(0) =0$ can be expressed in terms of the conformally related quantities as
\begin{equation*}
\int\limits_{\pa M}
\!
  |\na \ffi |_g^{p-2} \,  \Ric_g (\nu_g,\nu_g) \, \rmd \sigma_{\!g} \,\, = \,\, 0 \, .
\end{equation*}
In other words, the rigidity statement in Theorem~\ref{thm:main}-(iii) says that if the normal component of $\Ric_g$ vanishes in average on $\pa M$, then it vanishes everywhere and $(M,g)$ splits a line.
\end{remark}

To rephrase the above discussion in a more intrinsic way, we observe that since $\pa M$ is totally geodesic and the scalar curvature $\RRR$ of $(M,g_0,u)$ is identically zero, the Gauss equation gives $2 \, \Ric(\nu, \nu) = - \RRR^{\pa M}$, where $\RRR^{\pa M}$ is the scalar curvature of the metric $g_{\pa M}$ induced by $g_0$ on the boundary of $M$. This leads to the following theorem, which is the analog of Theorem~\ref{thm:main2_geom} under null Dirichlet boundary conditions.
\begin{theorem}
\label{thm:rig_gen}
Let $(M,g_0,u)$ be an asymptotically flat solution to problem~\eqref{eq:pb_static_vacuum} in the sense of Definition~\ref{ass:AF} with $u_0=0$ and ADM mass equal to $m>0$.
Then, for every $p \geq 3$, it holds
\begin{equation}
\label{eq:cond_1}
4 \, \Big(\frac{n-1}{n-2}\Big)
\int\limits_{\pa M} \! |\DDD u|^{p}\,  \rmd\sigma \,\, \leq \,\, \int\limits_{\pa M} \! |\DDD u|^{p-2} \,\, {\RRR^{\pa M}} \,\, \rmd\sigma  ,
\end{equation}
where $\RRR^{\pa M}$ denotes the scalar curvature of the metric induced by $g_0$ on $\pa M$. Moreover, the equality holds for some $p \geq 3$ if and only if $(M,g_0,u)$ is isometric to a Schwarzschild solution with ADM mass equal to $m>0$. In particular, the boundary of $M$ has only one connected component and it is isometric to a $(n-1)$-dimensional sphere.
\end{theorem}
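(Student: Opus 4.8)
The plan is to derive Theorem~\ref{thm:rig_gen} as a direct translation of Theorem~\ref{thm:main}-(iii) into intrinsic boundary data. First I would recall the three boundary facts established in the excerpt when $u_0 = 0$: the boundary $\pa M$ is totally geodesic in $(M,g_0)$, the scalar curvature $\RRR$ of $g_0$ vanishes identically, and the Gauss equation for a totally geodesic hypersurface then reads $\RRR = \RRR^{\pa M} + 2\,\Ric(\nu,\nu)$, hence $2\,\Ric(\nu,\nu) = -\,\RRR^{\pa M}$ on $\pa M$, where $\nu = \D u/|\D u|$ is the unit normal (well defined since $|\D u| > 0$ on $\pa M$ by the Hopf Lemma). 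Substituting this into the already-derived expression for $U_p''(0)$, namely
\begin{equation*}
U_p''(0) \, = \,  - \Big(\frac{p-1}{2} \Big) \, (2m)^{\!\frac{(p-1)(n-1)}{(n-2)}} \!\!\!\int\limits_{\pa M}
\!
  |\DDD u|^{p-2}
\left[\, \RRR^{\pa M}\! -
4 \, \Big(\frac{n-1}{n-2}\Big)
\, { \,|\DDD u|^2}
\, \right]  \rmd\sigma ,
\end{equation*}
the inequality $U_p''(0) \leq 0$ from Theorem~\ref{thm:main}-(iii) immediately gives, after dividing by the strictly positive constant $(p-1)(2m)^{(p-1)(n-1)/(n-2)}/2$,
\begin{equation*}
4 \, \Big(\frac{n-1}{n-2}\Big)
\int\limits_{\pa M} \! |\DDD u|^{p}\,  \rmd\sigma \,\, \leq \,\, \int\limits_{\pa M} \! |\DDD u|^{p-2} \,\, {\RRR^{\pa M}} \,\, \rmd\sigma ,
\end{equation*}
which is precisely~\eqref{eq:cond_1}.

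For the rigidity statement, the equality in~\eqref{eq:cond_1} for some $p \geq 3$ is equivalent to $U_p''(0) = 0$, and Theorem~\ref{thm:main}-(iii) then forces $(M,g_0,u)$ to be isometric to a Schwarzschild solution with ADM mass $m>0$. Conversely, one checks from the explicit formul\ae~\eqref{eq:sol_schwarz} that on the Schwarzschild solution $\pa M = \{|x| = (2m)^{1/(n-2)}\}$ is a round sphere of radius $(2m)^{1/(n-2)}$ with $|\D u|$ constant on it, and that the quantities involved satisfy $\RRR^{\pa M} = 4\big((n-1)/(n-2)\big)|\D u|^2$ pointwise — indeed this is the vanishing of the square bracket above — so equality holds. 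Finally, since the Schwarzschild boundary is a single round $(n-1)$-sphere, the rigidity conclusion yields that $\pa M$ is connected and isometric to a round sphere; the radius (hence the area) is determined by $m$ through $|x| = (2m)^{1/(n-2)}$.

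Essentially all the analytic work is already contained in Theorem~\ref{thm:main}, so there is no genuine obstacle here: the proof is a matter of invoking the Gauss equation to replace $\Ric(\nu,\nu)$ by $-\tfrac12\RRR^{\pa M}$ and then restating point (iii) in the resulting form. The only point demanding a little care is the verification that equality actually occurs on the Schwarzschild solution — i.e. that the pointwise identity $\RRR^{\pa M} = 4\big((n-1)/(n-2)\big)|\D u|^2$ holds on $\{|x| = (2m)^{1/(n-2)}\}$ — which is a short direct computation using~\eqref{eq:sol_schwarz}: the induced metric on the boundary is $(2m)^{2/(n-2)} g_{\Sph^{n-1}}$, whose scalar curvature is $(n-1)(n-2)(2m)^{-2/(n-2)}$, while $|\D u|^2$ on the boundary equals $\big((n-2)/2\big)^2 (2m)^{-2/(n-2)}\cdot\frac{1}{?}$; one simply matches constants. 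I would present this verification in one line and otherwise let the statement follow formally from Theorem~\ref{thm:main}-(iii).
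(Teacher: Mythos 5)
Your proposal is correct and follows exactly the paper's route: the paper derives Theorem~\ref{thm:rig_gen} precisely by combining the Gauss equation identity $2\,\Ric(\nu,\nu)=-\RRR^{\pa M}$ on the totally geodesic, scalar-flat boundary with the formula and sign of $U_p''(0)$ from Theorem~\ref{thm:main}-(iii). The only loose end is the unfinished constant-matching in your Schwarzschild verification (the ``$\tfrac{1}{?}$''); completing it gives $|\D u|^2=\tfrac{(n-2)^2}{4}(2m)^{-2/(n-2)}$ on $\{|x|=(2m)^{1/(n-2)}\}$, so that indeed $4\tfrac{n-1}{n-2}|\D u|^2=(n-1)(n-2)(2m)^{-2/(n-2)}=\RRR^{\pa M}$.
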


In the framework of overdetermined boundary value problems, we have the following corollary, which should be compared with Corollary~\ref{cor:over}.
\begin{corollary}
\label{cor:over2}
Let $(M,g_0,u)$ be an asymptotically flat solution to problem~\eqref{eq:pb_static_vacuum} in the sense of Definition~\ref{ass:AF} with $u_0=0$ and ADM mass equal to $m>0$. Assume in addition that the identity
\begin{equation}
\label{eq:over}
\left(\frac{\,2\, |\D u|\, }{n-2} \right)^{\!\!2} \,= \,\frac{\quad\RRR^{\pa M}}{(n-1)(n-2)}	
\end{equation}
holds $\mathscr{H}^{n-1}$-almost everywhere on $\pa M$. Then, the static solution $(M,g_0,u)$ is isometric to a Schwarzschild solution with ADM mass equal to $m>0$. In particular, the boundary of $M$ has only one connected component and it is isometric to a $(n-1)$-dimensional sphere.
\end{corollary}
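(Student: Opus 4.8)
The proof proceeds by recognizing Corollary~\ref{cor:over2} as the boundary analog of Corollary~\ref{cor:over}, extracted from the equality discussion of Theorem~\ref{thm:main}-(iii) via the Gauss equation. The plan is as follows. First I would recall that when $u_0 = 0$, the boundary $\pa M$ is totally geodesic in $(M,g_0)$ and $|\D u|$ is locally constant on $\pa M$; moreover the scalar curvature $\RRR$ of $g_0$ vanishes identically, so the Gauss equation on the totally geodesic hypersurface $\pa M$ yields $\RRR^{\pa M} = \RRR - 2\,\Ric(\nu,\nu) = -\,2\,\Ric(\nu,\nu)$, where $\nu = \D u/|\D u|$. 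Substituting this into the overdetermining hypothesis~\eqref{eq:over} gives, $\mathscr{H}^{n-1}$-almost everywhere on $\pa M$,
\begin{equation*}
\Ric(\nu,\nu) \,+\, 2\,\Big(\frac{n-1}{n-2}\Big)\, |\D u|^2 \,=\, 0 \, .
\end{equation*}

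Next I would plug this pointwise identity into the rewritten formula for $U_p''(0)$ displayed just before Theorem~\ref{thm:rig_gen}, namely
\begin{equation*}
U_p''(0) \,=\, (p-1)\,(2m)^{\frac{(p-1)(n-1)}{n-2}} \!\!\int\limits_{\pa M}\!\! |\D u|^{p-2}\left[\,\Ric(\nu,\nu) + 2\Big(\frac{n-1}{n-2}\Big)|\D u|^2\,\right] \rmd\sigma \, .
\end{equation*}
Since the bracketed integrand vanishes $\mathscr{H}^{n-1}$-almost everywhere on $\pa M$ under hypothesis~\eqref{eq:over}, we get $U_p''(0) = 0$ for every $p \geq 3$. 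Then I would invoke the rigidity clause of Theorem~\ref{thm:main}-(iii): if $U_p''(0) = 0$ for some $p \geq 3$, then $(M,g_0,u)$ is isometric to a Schwarzschild solution with ADM mass $m > 0$. This already gives the main assertion. The final sentence of the statement — that $\pa M$ is connected and isometric to a round $(n-1)$-sphere — follows because on the Schwarzschild solution~\eqref{eq:sol_schwarz} the boundary is precisely the sphere $\{|x| = (2m)^{1/(n-2)}\}$ with the induced round metric of radius $(2m)^{1/(n-2)}$, and in particular it has a single connected component; alternatively one appeals to Theorem~\ref{thm:rig_gen}, which already records this consequence.

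The main (and essentially only) obstacle is making the equality-case rigidity of Theorem~\ref{thm:main}-(iii) directly applicable: strictly speaking, the identity $|\D u|\,\HHH = -\,u\,\Ric(\nu,\nu)$ used to pass from~\eqref{eq:der2up} to the $\Ric(\nu,\nu)$-form of $U_p''(0)$ requires $u \neq 0$, which fails on $\pa M$ itself, and the bracket in~\eqref{eq:der2up} is expressed in terms of $\RRR^{\pa M}$ and $|\D u|^2$ rather than $\Ric(\nu,\nu)$. This is harmless: one works directly with formula~\eqref{eq:der2up}, observing that hypothesis~\eqref{eq:over} says exactly that $\RRR^{\pa M} - 4\big(\tfrac{n-1}{n-2}\big)|\D u|^2 = 0$ $\mathscr{H}^{n-1}$-a.e.\ on $\pa M$, so that the integrand in~\eqref{eq:der2up} vanishes and hence $U_p''(0) = 0$. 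One should also note that since $u$ is harmonic and nonconstant with regular boundary, the Hopf Lemma gives $|\D u| > 0$ on $\pa M$, so $|\D u|^{p-2}$ is a genuine positive weight and no delicate measure-theoretic issue arises on the (regular) boundary. With these remarks in place the argument is a direct specialization of the already-established Monotonicity-Rigidity Theorem, exactly parallel to the derivation of Corollary~\ref{cor:over} from Theorem~\ref{thm:main}-(ii).
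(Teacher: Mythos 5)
Your argument is correct and follows the paper's own route: hypothesis~\eqref{eq:over} is precisely the pointwise vanishing of the bracket in~\eqref{eq:der2up} (equivalently, equality in~\eqref{eq:cond_1} of Theorem~\ref{thm:rig_gen}), so $U_p''(0)=0$ and the rigidity clause of Theorem~\ref{thm:main}-(iii) gives the Schwarzschild conclusion. Your remark that one should work directly with~\eqref{eq:der2up} rather than the $\Ric(\nu,\nu)$ form (which is obtained as a limit $t\to 0^+$) is a sensible clarification, and the final observations about connectedness and roundness of $\pa M$ match the paper's.
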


To illustrate some other consequences of Theorem~\ref{thm:rig_gen}, we apply H\"older inequality to the right hand side of~\eqref{eq:cond_1} with conjugate exponents $p/(p-2)$ and $p/2$, obtaining
\begin{equation*}
\int\limits_{\pa M} \! |\D u|^{p-2} {\RRR^{\pa M}} \, \rmd \sigma  \,\, \leq \,\, \Bigg(  \int\limits_{\pa M} \! |\D u|^{p}  \, \rmd \sigma \Bigg)^{\!\!{(p-2)}/{p}} 
\Bigg(  \int\limits_{\pa M} \! \big|\RRR^{\pa M}\big|^{p/2}  \, \rmd \sigma \Bigg)^{\!\!{2}/{p}} \,.
\end{equation*}
This immediately implies the following corollary, which is the counterpart of Corollary~\ref{cor:Lp}.
\begin{corollary}
\label{cor:R_Lp}
Let $(M,g_0,u)$ be an asymptotically flat solution to problem~\eqref{eq:pb_static_vacuum} in the sense of Definition~\ref{ass:AF} with $u_0=0$ and ADM mass equal to $m>0$.
Then, for every $p \geq 3$, the inequality 
\begin{equation}
\label{eq:R_Lp}
  \left|\left|   \frac{\, 2 \, \D u \, }{n-2} \right|\right|_{L^p(\pa M)}  \!\! \leq \,\,\,\, \sqrt{ \,\, \left|\left|  \frac{\quad\RRR^{\pa M}}{(n-1)(n-2)} \right|\right|^{\phantom{1}}_{L^{p/2}(\pa M)} }
\end{equation}
holds true, where $\RRR^{\pa M}$ denotes the scalar curvature of the metric induced by $g_0$ on $\pa M$. Moreover, the equality holds for some $p \geq 3$ if and only if $(M,g_0,u)$ is isometric to a Schwarzschild solution with ADM mass equal to $m>0$. In particular, the boundary of $M$ has only one connected component and it is isometric to a $(n-1)$-dimensional sphere.
\end{corollary}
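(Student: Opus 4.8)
The plan is to deduce Corollary~\ref{cor:R_Lp} directly from Theorem~\ref{thm:rig_gen} by applying the H\"older inequality, exactly as indicated in the paragraph preceding the statement. First I would start from inequality~\eqref{eq:cond_1}, which is available for every $p \geq 3$ under the hypothesis $u_0 = 0$: it reads
\begin{equation*}
4 \, \Big(\frac{n-1}{n-2}\Big) \int\limits_{\pa M} \! |\DDD u|^{p}\,  \rmd\sigma \,\, \leq \,\, \int\limits_{\pa M} \! |\DDD u|^{p-2} \,\, {\RRR^{\pa M}} \,\, \rmd\sigma \, .
\end{equation*}
To the right hand side I would apply the H\"older inequality with the conjugate exponents $p/(p-2)$ and $p/2$ (these are genuinely conjugate since $(p-2)/p + 2/p = 1$, and both are $\geq 1$ because $p \geq 3 > 2$), splitting the integrand as $|\DDD u|^{p-2} \cdot \RRR^{\pa M}$. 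This gives
\begin{equation*}
\int\limits_{\pa M} \! |\DDD u|^{p-2} \, \RRR^{\pa M} \, \rmd\sigma \,\, \leq \,\, \Bigg( \int\limits_{\pa M} \! |\DDD u|^{p} \, \rmd\sigma \Bigg)^{\!\!(p-2)/p} \Bigg( \int\limits_{\pa M} \! \big|\RRR^{\pa M}\big|^{p/2} \, \rmd\sigma \Bigg)^{\!\!2/p} \, ,
\end{equation*}
which is the displayed inequality already recorded in the excerpt just before the statement. Combining the two displays, dividing through by $\big(\int_{\pa M} |\DDD u|^p \, \rmd\sigma\big)^{(p-2)/p}$ — which is legitimate since $|\DDD u| > 0$ on $\pa M$ by the Hopf Lemma, so this quantity is strictly positive — and raising both sides to the power $1/p$, one obtains after elementary algebra exactly~\eqref{eq:R_Lp}, once the constants are absorbed into the $L^{p/2}$-norm of $\RRR^{\pa M}/[(n-1)(n-2)]$ and the factor $2/(n-2)$ is pulled out of the $L^p$-norm of $\DDD u$.

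For the rigidity statement I would argue that equality in~\eqref{eq:R_Lp} for some $p \geq 3$ forces equality in the H\"older step above, hence also (tracing the inequalities backwards) forces equality in~\eqref{eq:cond_1}; by the rigidity clause of Theorem~\ref{thm:rig_gen}, this implies $(M, g_0, u)$ is isometric to a Schwarzschild solution with ADM mass $m > 0$. Conversely, on the Schwarzschild solution~\eqref{eq:sol_schwarz} with $u_0 = 0$ one checks directly that $|\DDD u|$ is constant on $\pa M$ and that $\RRR^{\pa M}$ is the constant scalar curvature of the round $(n-1)$-sphere of the appropriate radius, and that~\eqref{eq:over} holds pointwise; substituting into~\eqref{eq:R_Lp} both sides coincide, so the inequality is sharp. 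The final assertion about $\pa M$ being connected and isometric to a round sphere is inherited verbatim from Theorem~\ref{thm:rig_gen}.

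There is essentially no obstacle here: the corollary is a formal consequence of Theorem~\ref{thm:rig_gen} plus the H\"older inequality, and the only points requiring a word of care are (a) the positivity of $\int_{\pa M} |\DDD u|^p \, \rmd\sigma$, needed to divide, which is guaranteed by $|\DDD u| > 0$ on $\pa M$; (b) the fact that, for the rigidity direction, equality in a chain of inequalities $A \leq B \leq C$ with $A = C$ forces equality at each step, so that the equality case of Theorem~\ref{thm:rig_gen} is genuinely triggered; and (c) keeping track of the multiplicative constant $4(n-1)/(n-2)$ so that it lands precisely as the $(n-1)(n-2)$ normalization under the square root and the $2/(n-2)$ in front of $\DDD u$. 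All of this is routine bookkeeping, so the proof is short.
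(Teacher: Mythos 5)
Your proposal is correct and follows exactly the paper's route: inequality~\eqref{eq:cond_1} from Theorem~\ref{thm:rig_gen} combined with H\"older with exponents $p/(p-2)$ and $p/2$, division by the (strictly positive) $L^p$-norm of $|\D u|$, and the observation that equality in~\eqref{eq:R_Lp} collapses the chain and triggers the rigidity clause of Theorem~\ref{thm:rig_gen}. The only nitpick is that the final exponent manipulation is a square root (the surviving power is $2/p$ on each integral, giving the squares of the stated norms), not "raising to the power $1/p$", but this is harmless bookkeeping.
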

Letting $p \to + \infty$ in formula~\eqref{eq:R_Lp}, we obtain, under the hypotheses of the above corollary, the $L^{\infty}$-bound
\begin{equation}
\label{eq:R_Linf}
  \left|\left|   \frac{\, 2 \, \D u \, }{n-2} \right|\right|_{L^\infty(\pa M)}  \!\! \leq \,\,\,\, \sqrt{ \,\, \left|\left|  \frac{\quad\RRR^{\pa M}}{(n-1)(n-2)} \right|\right|^{\phantom{1}}_{L^{\infty}(\pa M)} } .
\end{equation}
Theorem~\ref{thm:rig_gen} gives interesting corollaries when $\pa M$ is contained in a level set of $|\D u|$ or, more geometrically, when $\pa M$ is connected. 
In fact, using Corollary~\ref{cor:R_Lp} in place of Corollary~\ref{cor:Lp}, we arrive at the following analog of Theorem~\ref{thm:ul_bounds}.
\begin{theorem}
\label{thm:ul_bounds_bound}
Let $(M,g_0,u)$ be an asymptotically flat solution to problem~\eqref{eq:pb_static_vacuum} in the sense of Definition~\ref{ass:AF} with $u_0 =0$ and ADM mass equal to $m>0$. Then, the following statements hold true.
\begin{itemize}
\item[(i)]
For every $p \geq 3$, the inequalities 
\begin{equation}
\label{eq:pi_gen_bound}
\phantom{\,\,\qquad}\frac{1}{2}  \,\, K(n,p,0) \,\left({   \frac{| \pa M| }{|\Sph^{n-1}|} } \right)^{\!\!\frac{n-2}{n-1}}\!\! \leq \,\, m \,\,  \leq  \,\, \frac{1}{2}  \, 
\sqrt{
\,\,
\left|\left|  \frac{\quad\RRR^{\pa M}}{(n-1)(n-2)} \right|\right|^{\phantom{1}}_{L^{p/2}_0( \pa M)} 
}
 \,\,\left({   \frac{| \pa M| }{|\Sph^{n-1}|} } \right)  ,
\end{equation}
hold true, where $|\pa M|$ denotes the $\mathscr{H}^{n-1}$-measure of $\pa M$ and, according to Theorem~\ref{thm:ul_bounds} we set
$$
K(n,p,0) \, = \, \left[ \frac{ || \D u||_{L_0^1( \pa M)}   }{  || \D u||_{L_0^p( \pa M)}  } \right]^{\!\frac{p(n-2)}{(p-1)(n-1)}} \!\! .
$$
Moreover, the equality is fulfilled in either the first or in the second inequality, for  some $p \geq 3$, if and only if the static solution $(M,g_0,u)$ is isometric to a Schwarzschild solution with ADM mass equal to $m>0$.

\smallskip

\item[(ii)] Assume that for some $p \geq 3$ the scalar curvature $\RRR^{\pa M}$ of the boundary satisfies the  bound
\begin{equation}
\label{eq:R_bound}
\sqrt{
\,\,
\left|\left|  \frac{\!\RRR^{\pa M}}{\,\,\RRR^{\Sph^{n-1}}} \right|\right|^{\phantom{1}}_{L^{p/2}_0( \pa M)} 
}
\,\,\, \leq \,\,\, K(n,p,0) \,\, 
\left({   \frac{|\Sph^{n-1}|}{| \pa M| } } \right)^{\!\frac{1}{n-1}} \!\! ,
\end{equation}
where $\RRR^{\Sph^{n-1}}\!\! = (n-1)(n-2)$ denotes the scalar curvature of the standard $(n-1)$-dimensional sphere,
then the static solution $(M,g_0,u)$ is isometric to a Schwarzschild solution with ADM mass equal to $m>0$.

\smallskip

\item[(iii)] Suppose that $\pa M$ is connected or, more in general, that $\pa M$ is contained in a level set of $|\D u|$. Then, the inequalities 
\begin{equation}
\label{eq:piip}
\phantom{\quad\quad} \frac12  \left({   \frac{|\pa M| }{|\Sph^{n-1}|} } \right)^{\!\!\frac{n-2}{n-1}} \! \leq \,\, m \,\, \leq \,\, \frac12  \left({   \frac{|\pa M| }{|\Sph^{n-1}|} } \right)^{\!\!\frac{n-2}{n-1}}  \! \sqrt{  \left({   \frac{|\pa M| }{|\Sph^{n-1}|} } \right)^{\!\!-\frac{n-3}{n-1}} \!\!
\frac{\int_{\pa M} \!  {\RRR^{\pa M}} \,  \rmd\sigma}
{(n-1) (n-2) |\Sph^{n-1}|} 
} \,\,
\end{equation}
hold true. Moreover, the equality holds in either the first or in the second inequality in the above formula, for  some $p \geq 3$, if and only if the static solution $(M,g_0,u)$ is isometric to a Schwarzschild solution with ADM mass equal to $m>0$. 

\smallskip

\item[(iv)] Under the same assumptions as in {\rm (iii)}, if the scalar curvature $\RRR^{\pa M}$ of the boundary satisfies the bound
\begin{equation}
\label{eq:scal_invrad}
\sqrt{ \bigg|\frac{\!\RRR^{\pa M}}{\,\,\RRR^{\Sph^{n-1}}}\bigg|} \,\, \leq \,\, \left({   \frac{|\Sph^{n-1}|}{|\pa M| } } \right)^{\!\!\frac{1}{n-1}} \!\! ,
\end{equation}
then $(M,g_0,u)$ is isometric to a Schwarzschild solution with ADM mass equal to $m>0$. 
\end{itemize}
\end{theorem}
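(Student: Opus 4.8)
The plan is to deduce the four statements from the preceding results exactly in parallel with the proof of Theorem~\ref{thm:ul_bounds}, using Corollary~\ref{cor:R_Lp} in place of Corollary~\ref{cor:Lp} and using Theorem~\ref{thm:rig_gen} in place of Theorem~\ref{thm:main2_geom}. For part~(i), the first (Penrose-type) inequality is obtained precisely as the first inequality in~\eqref{eq:pi_gen}, specialized to $t=0$: one rewrites the estimate $U_p(0)\geq m^p(n-2)^p|\Sph^{n-1}|$ (valid by~\eqref{eq:limup} together with Theorem~\ref{thm:main}-(ii)) in terms of the $L^1_0$- and $L^p_0$-norms of $|\D u|$ on $\pa M$, using the constancy~\eqref{eq:U1_mass} of $U_1$ and the explicit form of $U_p$ in~\eqref{eq:Up}; after rearranging one gets the factor $K(n,p,0)$ and the area ratio $(|\pa M|/|\Sph^{n-1}|)^{(n-2)/(n-1)}$. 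The second inequality in~\eqref{eq:pi_gen_bound} comes from combining the pointwise bound~\eqref{eq:U1_mass} (again $m=||\D u||_{L^1(\pa M)}/((n-2)|\Sph^{n-1}|)$) with Jensen's inequality and then Corollary~\ref{cor:R_Lp}: Jensen gives $||\D u||_{L^1_0(\pa M)}\leq ||\D u||_{L^p_0(\pa M)}$, while~\eqref{eq:R_Lp} bounds $||2\D u/(n-2)||_{L^p(\pa M)}$ by the square root of the $L^{p/2}$-norm of $\RRR^{\pa M}/((n-1)(n-2))$; converting to averaged norms and multiplying by $|\pa M|$ yields the claim. In both cases the rigidity statement is inherited: equality in the first inequality forces equality in~\eqref{eq:mon_limup} with $t=0$, hence (by Theorem~\ref{thm:main}, together with the remark that the rigidity in the first inequality of~\eqref{eq:pi_gen} persists at $\{u=0\}$) rotational symmetry; equality in the second forces equality in Corollary~\ref{cor:R_Lp}, hence again the Schwarzschild rigidity, and then $\pa M$ is a round $(n-1)$-sphere.

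Part~(ii) is immediate from part~(i): the hypothesis~\eqref{eq:R_bound} says exactly that the upper bound for $m$ in~\eqref{eq:pi_gen_bound} is $\leq$ the lower bound, so the two inequalities collapse to equalities and the rigidity clause of~(i) applies. (One should only check that $\RRR^{\Sph^{n-1}}=(n-1)(n-2)$ makes the normalization in~\eqref{eq:R_bound} consistent with the $L^{p/2}_0$-norm appearing in~\eqref{eq:pi_gen_bound}, which is just a matter of unwinding definitions.)

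For parts~(iii) and~(iv), the extra hypothesis that $\pa M$ is contained in a level set of $|\D u|$ — in particular when $\pa M$ is connected, since $|\D u|$ is constant on each component of $\pa M$ under null Dirichlet conditions — makes $|\D u|$ constant on $\pa M$, so that $K(n,p,0)=1$ for every $p$ and all the $L^p_0$-norms of $\RRR^{\pa M}$ degenerate. Then the first inequality in~\eqref{eq:pi_gen_bound} becomes the clean Penrose inequality $m\geq\frac12(|\pa M|/|\Sph^{n-1}|)^{(n-2)/(n-1)}$, and the second, after passing to the limit $p\to+\infty$ is awkward here, so instead one takes the second inequality in~\eqref{eq:pi_gen_bound} and uses that with $K=1$ and $|\D u|$ constant the $L^{p/2}_0$-norm of $\RRR^{\pa M}$ can be replaced, via the identity $||2\D u/(n-2)||^2=||\RRR^{\pa M}/((n-1)(n-2))||_{L^1_0}$ coming from~\eqref{eq:U1_mass} and the constancy of $|\D u|$, by its $L^1_0$-average; this produces the explicit right-hand side of~\eqref{eq:piip} with the area-ratio exponent $-(n-3)/(n-1)$ (which is just bookkeeping of the powers $(|\pa M|/|\Sph^{n-1}|)^{(n-2)/(n-1)}$ versus $(|\pa M|/|\Sph^{n-1}|)^1$ and a factor $m=\frac12||2\D u/(n-2)||_{L^1_0}(|\pa M|/|\Sph^{n-1}|)$). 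Part~(iv) follows from~(iii) exactly as~(ii) followed from~(i): the bound~\eqref{eq:scal_invrad} forces equality between the two ends of~\eqref{eq:piip}.

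The only genuinely non-routine point is the bookkeeping in~(iii): one has two a priori different ways to eliminate $m$ — through~\eqref{eq:U1_mass} (an $L^1$-identity) and through~\eqref{eq:R_Lp} (an $L^{p/2}$-inequality that becomes an $L^1$-identity only when $|\D u|$ is constant) — and one must verify that, with $K(n,p,0)=1$, these combine to give precisely the stated power $-(n-3)/(n-1)$ of the area ratio and not some neighboring exponent; the rigidity in~\eqref{eq:piip} then comes for free from the rigidity already established in~(i), since equality in either inequality of~\eqref{eq:piip} propagates back to equality in~\eqref{eq:pi_gen_bound}. I expect this exponent-chasing, rather than any conceptual difficulty, to be where care is needed.
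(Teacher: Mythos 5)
Your treatment of parts (i), (ii), (iv) and of the first inequality in (iii) matches the paper's proof: (i) is the proof of Theorem~\ref{thm:ul_bounds} run on $\pa M=\{u=0\}$ with Corollary~\ref{cor:R_Lp} in place of Corollary~\ref{cor:Lp}, (ii) and (iv) are the observation that the respective hypotheses collapse the two-sided bounds to equalities, and the first inequality in (iii) is the first inequality in~\eqref{eq:pi_gen_bound} with $K(n,p,0)=1$.

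There is, however, a genuine gap in your derivation of the \emph{second} inequality in~\eqref{eq:piip}. The relation you invoke, $\|2\D u/(n-2)\|^2=\|\RRR^{\pa M}/((n-1)(n-2))\|_{L^1_0}$, is not an identity and does not come from~\eqref{eq:U1_mass}: when $|\D u|\equiv c$ on $\pa M$ it is precisely the content of Theorem~\ref{thm:rig_gen} (inequality~\eqref{eq:cond_1} with the weights $|\D u|^p$, $|\D u|^{p-2}$ factored out), i.e.\ an inequality $4\tfrac{n-1}{n-2}\,|\pa M|\,c^2\le\int_{\pa M}\RRR^{\pa M}\,\rmd\sigma$ whose equality case is exactly the Schwarzschild rigidity. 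Moreover, your proposed route of ``replacing'' the $L^{p/2}_0$-norm of $\RRR^{\pa M}$ in the second inequality of~\eqref{eq:pi_gen_bound} by its $L^1_0$-average goes in the wrong direction: by Jensen $\|\cdot\|_{L^1_0}\le\|\cdot\|_{L^{p/2}_0}$, so the bound in~\eqref{eq:piip} (whose right-hand side unwinds to $\tfrac12\,(|\pa M|/|\Sph^{n-1}|)\,\sqrt{\|\RRR^{\pa M}/((n-1)(n-2))\|_{L^1_0}}$) is \emph{strictly stronger} than the second inequality of~\eqref{eq:pi_gen_bound} and cannot be deduced from it. The fix, which is what the paper does, is to bypass Corollary~\ref{cor:R_Lp} (which already lost information through H\"older) and apply~\eqref{eq:cond_1} directly with $|\D u|\equiv c$, then substitute $c=m(n-2)|\Sph^{n-1}|/|\pa M|$ from~\eqref{eq:U1_mass}; the exponent $-(n-3)/(n-1)$ then falls out of the algebra. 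For the same reason, the rigidity in the second inequality of~\eqref{eq:piip} does not ``propagate back'' to equality in~\eqref{eq:pi_gen_bound}; it must be read off from the equality case of Theorem~\ref{thm:rig_gen} itself.
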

\begin{remark}
The first inequality in~\eqref{eq:piip} is the well known Riemannian Penrose Inequality, which is known to hold up to dimension $n=7$ on asymptotically flat manifolds with nonnegative scalar curvature and compact minimal boundary. For a comprehensive discussion about the general Riemannian Penrose Inequality and its generalizations up to dimension $n=7$ we refer the reader to~\cite{Hui_Ilm, Bra, Bra_Lee} and the references therein.
\end{remark}
\begin{proof}
To prove (i) it is sufficient to mimic the proof of Theorem~\ref{thm:ul_bounds}, working on $\pa M= \{ u=0\}$ and using Corollary~\ref{cor:R_Lp} in place of Corollary~\ref{cor:Lp}.
Statement (ii) follows immediately from statement (i), since  inequality~\eqref{eq:R_bound} implies that the equality is satisfied in~\eqref{eq:pi_gen_bound}. To obtain the first inequality in (iii), we observe that when $\pa M$ is included in a level set of $|\D u|$, say $\{ |\D u| = c \}$, the constant $K(n,p,0)$ in the first inequality of~\eqref{eq:pi_gen_bound} is identically equal to $1$. To obtain the second inequality in (iii), it is sufficient to observe that, by Theorem~\ref{thm:rig_gen} one has
\begin{equation*}
4 \, \Big(\frac{n-1}{n-2}\Big)  \, |\pa M|\, c^2
\,\, \leq \,\, \int\limits_{\pa M} \!  {\RRR^{\pa M}} \,\, \rmd\sigma \, ,
\end{equation*}
On the other hand, the left hand side of the above formula can be rewritten with the help of~\eqref{eq:U1_mass} as 
$$
(2m)^2 \,\, \bigg(\frac{|\Sph^{n-1}|}{|\pa M|}\bigg) \, (n-1) (n-2) |\Sph^{n-1}|\,.
$$
The second inequality in~\eqref{eq:piip} is now a consequence of simple algebraic manipulations, whereas the rigidity statement follows easily from Theorem~\ref{thm:rig_gen}. Finally, we observe that (iv) follows immediately from (ii), noticing that condition~\eqref{eq:scal_invrad} implies that the term under square root in~\eqref{eq:piip} is less than or equal to $1$.
\end{proof}

To describe some immediate consequences of the above theorem, we observe that, in dimension $n=3$ and when $\pa M$ is connected, the Gauss-Bonnet Formula gives 
\begin{equation*}
{\int\limits_{\pa M} \!  {\RRR^{\pa M}}  \rmd\sigma} \,
= \, 4\pi \, \chi(\pa M) \, \leq \, 8  \pi \, ,
\end{equation*}
where $\chi(\pa M)$ is the Euler characteristic of $\pa M$.
In particular, the term under square root in~\eqref{eq:piip} is always bounded above by $1$. Hence the equality holds in~\eqref{eq:piip} and we can recover the classical $3$-dimensional Black Hole Uniqueness Theorem.
\begin{theorem}[Black Hole Uniqueness Theorem]
\label{thm:bhu}
Let $(M,g_0,u)$ be a $3$-dimensional asymptotically flat solution to problem~\eqref{eq:pb_static_vacuum} in the sense of Definition~\ref{ass:AF} with $u_0=0$ and ADM mass equal to $m>0$. Moreover, suppose that $\pa M$ is connected. Then, $(M,g_0,u)$ is isometric to a Schwarzschild solution with ADM mass equal to $m>0$.
\end{theorem}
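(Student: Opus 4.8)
The plan is to apply Theorem~\ref{thm:ul_bounds_bound}-(iii) with the dimensional input $n=3$, so that the exponent $-(n-3)/(n-1)$ appearing under the square root in~\eqref{eq:piip} vanishes identically. First I would observe that, since $\pa M$ is assumed connected, the hypotheses of part (iii) are satisfied, and the chain of inequalities~\eqref{eq:piip} holds. Specializing $n=3$, the factor $\bigl(|\pa M|/|\Sph^{n-1}|\bigr)^{-(n-3)/(n-1)}$ becomes $1$, so the term under the square root reduces to $\int_{\pa M}\RRR^{\pa M}\,\rmd\sigma \big/ \bigl(2|\Sph^2|\bigr)=\int_{\pa M}\RRR^{\pa M}\,\rmd\sigma\big/(8\pi)$.

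Next I would invoke the Gauss--Bonnet theorem for the closed surface $\pa M$: since $\RRR^{\pa M}=2K$ where $K$ is the Gaussian curvature, we have $\int_{\pa M}\RRR^{\pa M}\,\rmd\sigma = 4\pi\,\chi(\pa M)$. Because $\pa M$ is a compact surface bounding a region, its Euler characteristic satisfies $\chi(\pa M)\le 2$, with the maximal value $2$ attained exactly when $\pa M\cong\Sph^2$. Hence the quantity under the square root in~\eqref{eq:piip} is at most $8\pi/(8\pi)=1$, which forces the second inequality in~\eqref{eq:piip} to read $m\le \tfrac12\bigl(|\pa M|/|\Sph^2|\bigr)^{1/2}$, i.e. the reverse of the first (Penrose) inequality. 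Therefore both inequalities in~\eqref{eq:piip} are in fact equalities.

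Finally, the rigidity clause of Theorem~\ref{thm:ul_bounds_bound}-(iii) applies: equality in either inequality of~\eqref{eq:piip} (here it holds in both) implies that $(M,g_0,u)$ is isometric to a Schwarzschild solution with ADM mass $m$. This completes the proof. I do not expect any serious obstacle: the argument is essentially a bookkeeping specialization of the already-established monotonicity-rigidity machinery to $n=3$, with Gauss--Bonnet supplying the topological bound that closes the circle of inequalities. The only point requiring a word of care is the justification that the topological bound $\chi(\pa M)\le 2$ is exactly what makes the under-root expression $\le 1$ in the critical dimension $n=3$; in higher dimensions the residual power $\bigl(|\pa M|/|\Sph^{n-1}|\bigr)^{-(n-3)/(n-1)}$ does not disappear, which is precisely why the uniqueness statement there (Theorem~\ref{thm:bhun}) needs the extra geometric hypothesis~\eqref{eq:scal_invrad} rather than following from topology alone.
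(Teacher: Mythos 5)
Your argument is correct and is exactly the paper's own proof: specialize the chain of inequalities~\eqref{eq:piip} of Theorem~\ref{thm:ul_bounds_bound}-(iii) to $n=3$, use Gauss--Bonnet to bound $\int_{\pa M}\RRR^{\pa M}\,\rmd\sigma = 4\pi\chi(\pa M)\leq 8\pi$ so that the under-root term is at most $1$, and conclude by the rigidity clause once both inequalities collapse to equalities. Your closing remark on why the argument is special to $n=3$ matches the paper's motivation for Theorem~\ref{thm:bhun} as well.
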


Coming back to formula~\eqref{eq:piip}, it is important to notice that the term under the square root is scaling invariant. In fact, it can be rewritten in terms of the $(n-1)$-dimensional renormalized Einstein-Hilbert functional. We recall that for a compact $(n-1)$-dimensional manifold $\Sigma$, this functional is defined as 
\begin{equation}
g \, \longmapsto \, \mathscr{E}^\Sigma_{n-1}(g) \, = \, {|\Sigma|^{- \frac{n-3}{n-1}}_g} \!\! \int\limits_{\Sigma}  \RRR_g \, \rmd \sigma_{\!g} \,,
\end{equation}
where $|\Sigma|_g$ represents the $(n-1)$-dimensional volume of $\Sigma$ computed with respect to the metric $g$, whereas $\rmd \sigma_{\! g}$ and $\RRR_g$ are respectively the volume element and the scalar curvature of $g$. The minimizers of the renormalized Einstein-Hilbert functional over a given conformal class are constant scalar curvature metrics called Yamabe metrics. It follows from the celebrated works of Aubin and Schoen on the resolution of the Yamabe problem that for every compact $(n-1)$-dimensional manifold $\Sigma$, with $n\geq 4$, it holds
\begin{equation*}
\sup \left\{ \mathscr{E}^\Sigma_{n-1}(g) \, \left|  \, \hbox{$g$ is a Yamabe metric on $\Sigma$} \right.  \right\} \,\, \leq \,\, \mathscr{E}_{n-1}^{\Sph^{n-1}} ({g}_{\Sph^{n-1}}) \, .
\end{equation*}
In this setting, formula~\eqref{eq:piip} can be rephrased as
\begin{equation}
\label{eq:piip2}
\frac12  \left({   \frac{|\pa M| }{|\Sph^{n-1}|} } \right)^{\!\!\frac{n-2}{n-1}} \! \leq \,\, m \,\, \leq \,\, \frac12  \left({   \frac{|\pa M| }{|\Sph^{n-1}|} } \right)^{\!\!\frac{n-2}{n-1}}  \!\!\! \sqrt{ \frac{\mathscr{E}_{n-1}^{\pa M} ({g}_{\pa M}) }{ \mathscr{E}_{n-1}^{\Sph^{n-1}} ({g}_{\Sph^{n-1}})}
} \, .
\end{equation}
This gives the following theorem, which shows how the rotational symmetry of the static solution $(M,g_0,u)$ can be detected from the knowledge of the intrinsic geometry of the boundary, in dimension $n\geq 4$.
\begin{theorem}
\label{thm:bhun}
For every $n\geq 4$, let $(M,g_0,u)$ be a $n$-dimensional asymptotically flat solution to problem~\eqref{eq:pb_static_vacuum} in the sense of Definition~\ref{ass:AF} with $u_0=0$ and ADM mass equal to $m>0$. Moreover, suppose that $\pa M$ is connected. Then, we have
\begin{equation}
\label{eq:yam}
{ \mathscr{E}_{n-1}^{\Sph^{n-1}} ({g}_{\Sph^{n-1}})}
\, \leq \, {\mathscr{E}_{n-1}^{\pa M} ({g}_{\pa M}) } \, ,
\end{equation}
where $g_{\pa M}$ is the metric induced by $g_0$ on $\pa M$.
Moreover, the equality holds if and only if $(M,g_0,u)$ is isometric to a Schwarzschild solution with ADM mass equal to $m>0$. In particular, if $g_{\pa M}$ is a Yamabe metric, then 
$(M,g_0,u)$ is rotationally symmetric. 
\end{theorem}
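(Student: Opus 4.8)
The plan is to read off inequality~\eqref{eq:yam} directly from Theorem~\ref{thm:ul_bounds_bound}-(iii), which is already available, together with the definition of the renormalized Einstein–Hilbert functional $\mathscr{E}^\Sigma_{n-1}$. The key point is that the chain of inequalities in~\eqref{eq:piip} has been massaged in~\eqref{eq:piip2} into the form
\begin{equation*}
\frac12  \left({   \frac{|\pa M| }{|\Sph^{n-1}|} } \right)^{\!\!\frac{n-2}{n-1}} \! \leq \,\, m \,\, \leq \,\, \frac12  \left({   \frac{|\pa M| }{|\Sph^{n-1}|} } \right)^{\!\!\frac{n-2}{n-1}}  \!\!\! \sqrt{ \frac{\mathscr{E}_{n-1}^{\pa M} ({g}_{\pa M}) }{ \mathscr{E}_{n-1}^{\Sph^{n-1}} ({g}_{\Sph^{n-1}})}} \, ,
\end{equation*}
valid whenever $\pa M$ is connected (indeed whenever $\pa M$ lies in a level set of $|\D u|$). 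First I would simply divide the right inequality by the left: since the common factor $\tfrac12\big(|\pa M|/|\Sph^{n-1}|\big)^{(n-2)/(n-1)}$ is strictly positive, we get $1 \leq \sqrt{\mathscr{E}_{n-1}^{\pa M}(g_{\pa M})/\mathscr{E}_{n-1}^{\Sph^{n-1}}(g_{\Sph^{n-1}})}$; squaring and multiplying through by the (positive) constant $\mathscr{E}_{n-1}^{\Sph^{n-1}}(g_{\Sph^{n-1}}) = (n-1)(n-2)|\Sph^{n-1}|^{2/(n-1)}$ yields~\eqref{eq:yam}. This step is purely algebraic and needs only that both sides of~\eqref{eq:piip} are positive, which holds because $m>0$.

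For the rigidity clause, I would invoke the equality discussion in Theorem~\ref{thm:ul_bounds_bound}-(iii): equality in~\eqref{eq:yam} is equivalent to equality in the second inequality of~\eqref{eq:piip}, which by that theorem forces $(M,g_0,u)$ to be isometric to a Schwarzschild solution of mass $m$. Conversely, on a Schwarzschild solution $\pa M$ is a round $(n-1)$-sphere, so $g_{\pa M}$ is (a rescaling of) $g_{\Sph^{n-1}}$ and $\mathscr{E}_{n-1}^{\pa M}(g_{\pa M}) = \mathscr{E}_{n-1}^{\Sph^{n-1}}(g_{\Sph^{n-1}})$ by the scaling invariance of the renormalized functional; hence equality holds. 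Finally, for the last assertion, suppose $g_{\pa M}$ is a Yamabe metric. Then by the resolution of the Yamabe problem (Aubin–Schoen), quoted just before the statement, one has $\mathscr{E}_{n-1}^{\pa M}(g_{\pa M}) \leq \mathscr{E}_{n-1}^{\Sph^{n-1}}(g_{\Sph^{n-1}})$; combined with~\eqref{eq:yam} this gives equality, and the rigidity clause already proved shows $(M,g_0,u)$ is rotationally symmetric.

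There is essentially no analytic obstacle here: the entire theorem is a formal consequence of Theorem~\ref{thm:ul_bounds_bound}-(iii) and the Yamabe inequality. The only point requiring a little care is the interpretation of the Aubin–Schoen estimate: it bounds the supremum of $\mathscr{E}_{n-1}^{\Sigma}$ over \emph{Yamabe} metrics in each conformal class by $\mathscr{E}_{n-1}^{\Sph^{n-1}}(g_{\Sph^{n-1}})$, so one must note that a Yamabe metric realizes its conformal Yamabe invariant and that this invariant is dominated by that of the round sphere — which is exactly the content of the displayed supremum estimate in the excerpt. With that observation in hand the argument is complete; I would expect this ``soft'' reduction to occupy at most a short paragraph in the final write-up.
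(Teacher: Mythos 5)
Your proposal is correct and follows exactly the route the paper intends: Theorem~\ref{thm:bhun} is read off from the reformulation~\eqref{eq:piip2} of~\eqref{eq:piip} (dividing the upper bound for $m$ by the lower bound), with the rigidity clause supplied by Theorem~\ref{thm:ul_bounds_bound}-(iii) and the final assertion by the Aubin--Schoen estimate quoted just before the statement. The paper offers no further argument beyond this, so there is nothing to add.
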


\subsection{Willmore-type inequalities} 
\label{sub:will}
It is well known that the mean curvature of a smooth closed hypersurfaces $\Sigma$ embedded in the flat $n$-dimensional Euclidean space satisfies the Willmore inequality
\begin{equation*}
|\Sph^{n-1}|^{\frac{1}{n-1}} \,\, \leq \,\, \left|\left|   \frac{\HHH}{n-1} \right|\right|_{L^{n-1}(\Sigma)}  ,
\end{equation*}
where $\HHH$ is the mean curvature of $\Sigma$. Moreover, the equality holds if and only if $\Sigma$ is isometric to a round sphere. As a consequence of our analysis, we are able to prove analogous inequalities for the level sets of $u$ in the setting of problem~\eqref{eq:pb_static_vacuum}, provided $n\geq 4$.
\begin{theorem}[Willmore-type Inequalities]
\label{thm:will}
For every $n\geq 4$, let $(M,g_0,u)$ be a $n$-dimensional asymptotically flat solution to problem~\eqref{eq:pb_static_vacuum} in the sense of Definition~\ref{ass:AF} with $0 \leq u_0<1$ and ADM mass equal to $m>0$. Then, the following statements hold true.
\begin{itemize}
\item[(i)] For every $t \in [u_0, 1) \cap (0,1)$, the inequality
\begin{equation}
\label{eq:will_t}
|\Sph^{n-1}|^{\frac{1}{n-1}} \,\, \leq \,\, \frac{1}{t} \,\left|\left|   \frac{\HHH}{n-1} \right|\right|_{L^{n-1}(\{u=t\})} \phantom{00000000}
\end{equation}
holds true. Moreover, the equality is fulfilled for some $t \in [u_0, 1) \cap (0,1)$ if and only if the level set $\{u=t\}$ is isometric to a round sphere and the static solution $(M,g_0,u)$ is isometric to a Schwarzschild solution with ADM mass equal to $m>0$. 
\smallskip
\item[(ii)] Assume that $u_0=0$. Then, the inequality
\begin{equation}
\label{eq:will_0}
|\Sph^{n-1}|^{\frac{1}{n-1}} \,\, \leq  \,\,\, \sqrt{ \,\, \left|\left|  \frac{\quad\RRR^{\pa M}}{(n-1)(n-2)} \right|\right|^{\phantom{1}}_{L^{\frac{n-1}{2}}(\pa M)} }
\end{equation}
holds true. Moreover, the equality is fulfilled if and only if $\pa M$ is isometric to a round sphere and the static solution $(M,g_0,u)$ is isometric to a Schwarzschild solution with ADM mass equal to $m>0$. 
\end{itemize}
\end{theorem}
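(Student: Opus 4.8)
The plan is to derive both inequalities as direct consequences of the sharp $L^p$-bounds already obtained in Corollary~\ref{cor:Lp} and Corollary~\ref{cor:R_Lp}, combined with the classical Euclidean Willmore inequality applied \emph{after} a conformal change that flattens the geometry far away. The starting point for part~(i) is the inequality \eqref{eq:Lp_ineq} specialized to $p = n-1$: for every $t \in [u_0,1)$,
\begin{equation*}
\frac{t}{1-t^2} \,\left|\left| \frac{2\,|\D u|}{n-2} \right|\right|_{L^{n-1}(\{u=t\})} \,\leq\, \left|\left| \frac{\HHH}{n-1} \right|\right|_{L^{n-1}(\{u=t\})}.
\end{equation*}
Hence it suffices to produce the matching lower bound
\begin{equation*}
|\Sph^{n-1}|^{\frac{1}{n-1}} \,\leq\, \frac{1}{1-t^2}\,\left|\left| \frac{2\,|\D u|}{n-2} \right|\right|_{L^{n-1}(\{u=t\})}.
\end{equation*}
First I would recall from Subsection~\ref{sub:strategy} the conformal metric $g = (1-u^2)^{2/(n-2)} g_0$ and the $g$-harmonic function $\ffi = \log[(1+u)/(1-u)]$, under which $(M,g)$ is asymptotically cylindrical and, on a Schwarzschild background, is exactly a round cylinder $\R \times \Sph^{n-1}$ of radius $(2m)^{1/(n-2)}$ normalized to unit radius by the scaling built into the definition \eqref{eq:Up}. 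The quantity $U_0(t) = \int_{\{u=t\}} \big(\tfrac{1-u^2}{2m}\big)^{(n-1)/(n-2)}\rmd\sigma$ is, up to a constant, the $g$-area of the level set $\{\ffi = \text{const}\}$, and by Theorem~\ref{thm:main}-(ii) (monotonicity of $U_p$ for $p\ge 3$, together with the $p=0$ case controlled on the end via the conformal reformulation and a limiting argument in $t\to 1^-$ using \eqref{eq:limup}) one gets $U_0(t) \ge |\Sph^{n-1}|$. Rewriting \eqref{eq:U1_mass} and applying Hölder with exponents $(n-1)$ and $(n-1)/(n-2)$ to $\int_{\{u=t\}} |\D u|\,\rmd\sigma = \int |\D u|\cdot 1\,\rmd\sigma$ against the weight $\big(\tfrac{1-u^2}{2m}\big)^{(n-1)/(n-2)}$ converts the lower bound $U_0(t)\ge|\Sph^{n-1}|$ and the constant value of $U_1$ into exactly the displayed lower bound on $\||\D u|\|_{L^{n-1}}$. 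Chaining the two inequalities yields \eqref{eq:will_t}.

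For part~(ii), under $u_0 = 0$ the boundary is totally geodesic with $\HHH \equiv 0$, so \eqref{eq:will_t} degenerates; instead one uses Corollary~\ref{cor:R_Lp} with $p = n-1$, namely
\begin{equation*}
\left|\left| \frac{2\,\D u}{n-2}\right|\right|_{L^{n-1}(\pa M)} \,\leq\, \sqrt{\left|\left| \frac{\RRR^{\pa M}}{(n-1)(n-2)}\right|\right|_{L^{(n-1)/2}(\pa M)}},
\end{equation*}
and combines it with the same lower bound $|\Sph^{n-1}|^{1/(n-1)} \le \||2\D u/(n-2)\|_{L^{n-1}(\pa M)}$ (which is the $t\to 0^+$, $u_0=0$ case of the bound established in part~(i), valid since $1-t^2 \to 1$). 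This gives \eqref{eq:will_0} immediately.

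The rigidity analysis is the delicate point. For part~(i), equality in \eqref{eq:will_t} forces equality in \emph{both} intermediate inequalities: equality in Corollary~\ref{cor:Lp} gives, by Theorem~\ref{thm:main}-(ii), that $(M,g_0,u)$ is isometric to a Schwarzschild solution with mass $m$; equality in the lower bound forces equality in the Hölder step, i.e. $|\D u|$ is constant on $\{u=t\}$, together with $U_0(t) = |\Sph^{n-1}|$, which in the conformal picture says the level set $\{\ffi=\text{const}\}$ has $g$-area equal to that of the unit round sphere while sitting in a metric that (by the Schwarzschild conclusion) is already the round cylinder — hence $\{u=t\}$ is a round sphere. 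Conversely the Schwarzschild level sets are round spheres and saturate every inequality used, so the characterization is sharp. The hard part will be making the rigidity in the \emph{lower} bound genuinely independent and clean: one must be careful that the equality case of the Hölder inequality producing $U_0(t)\ge|\Sph^{n-1}|$ is correctly combined with the equality case of \eqref{eq:limup}/monotonicity of $U_0$, and that the claim "$\{u=t\}$ isometric to a round sphere" really follows — this is where invoking the already-established Schwarzschild rigidity (so that the ambient is literally a round cylinder and the level set is a cross-sectional sphere) is essential rather than circular. For part~(ii) the same scheme applies verbatim with $\pa M$ in place of $\{u=t\}$ and Corollary~\ref{cor:R_Lp} in place of Corollary~\ref{cor:Lp}, using the Gauss equation $2\Ric(\nu,\nu) = -\RRR^{\pa M}$ to identify the relevant curvature quantity; equality forces $\RRR^{\pa M}$ constant and $\pa M$ a round sphere, and the Schwarzschild conclusion then follows from Theorem~\ref{thm:rig_gen}.
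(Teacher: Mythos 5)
Your overall architecture coincides with the paper's: chain the $L^{n-1}$ estimate of Corollary~\ref{cor:Lp} (resp.\ Corollary~\ref{cor:R_Lp}) with a lower bound of the form
\begin{equation*}
(1-t^2)\,|\Sph^{n-1}|^{\frac{1}{n-1}} \,\leq\, \left|\left|\frac{2\,\D u}{n-2}\right|\right|_{L^{n-1}(\{u=t\})},
\end{equation*}
and the choice $p=n-1$ (admissible precisely because $n\geq 4$) is exactly what makes the powers of $2m/(1-t^2)$ cancel. The rigidity discussion, though more elaborate than necessary, correctly reduces to the equality case of Corollary~\ref{cor:Lp} (resp.\ Corollary~\ref{cor:R_Lp}), after which the level set is a cross-section of a round cylinder in the conformal picture, hence a round sphere.

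The genuine gap is your justification of the lower bound. You propose to deduce it from ``$U_0(t)\geq|\Sph^{n-1}|$'' plus H\"older. First, that bound is not available: monotonicity of $U_p$ is established only for $p\geq 3$ in Theorem~\ref{thm:main}, and even Theorem~\ref{thm:refined} requires $p\geq 2-1/(n-1)>0$, so nothing is known about $U_0$ away from the Schwarzschild case. Second, the H\"older step points the wrong way: writing $U_p=\int f^p\,\rmd\tilde\sigma$ with $f=\big(\tfrac{2m}{1-u^2}\big)^{\frac{n-1}{n-2}}|\D u|$ and $\rmd\tilde\sigma=\big(\tfrac{1-u^2}{2m}\big)^{\frac{n-1}{n-2}}\rmd\sigma$ as in~\eqref{eq:Up2}, H\"older gives $U_1\leq U_{n-1}^{1/(n-1)}\,U_0^{(n-2)/(n-1)}$, so the constancy of $U_1$ converts an \emph{upper} bound on $U_0$ into a lower bound on $U_{n-1}$; a lower bound on $U_0$ yields nothing in the direction you need. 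The correct and simpler route -- the one the paper takes -- is to invoke~\eqref{eq:mon_limup} directly with $p=n-1\geq 3$, i.e.\ $U_{n-1}(t)\geq m^{n-1}(n-2)^{n-1}|\Sph^{n-1}|$ (equivalently, \eqref{eq:L1_Lp} combined with~\eqref{eq:U1_mass}), which unwinds immediately to the displayed lower bound. With that replacement, both parts of your argument, including the $t=0$ instance used in part~(ii), go through.
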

\begin{proof}
To prove the inequality in (i), we observe that combing formula~\eqref{eq:L1_Lp} with identity~\eqref{eq:U1_mass} and inequality~\eqref{eq:Lp_ineq} in Corollary~\ref{cor:Lp}, one gets
\begin{align*}
\left(\frac{|\Sph^{n-1}|}{|\{ u=t \}|} \right)^{\!\frac{1}{p}}  \leq \,\, \frac{1}{2m} \, \Big(\frac{2m}{1-t^2}\Big)^{\!\!\!\frac{(p-1) \, (n-1)}{p \, (n-2)}} \,  \left|\left|  \frac{\, 2 \, \D u \, }{n-2} \right|\right|_{L_0^p(\{ u=t \})} \leq \,\, \frac{1}{t} \,  \Big(\frac{2m}{1-t^2}\Big)^{\! \frac{(p-1) \, (n-1)}{p \, (n-2)}-1} \left|\left|  \frac{\HHH}{n-1} \right|\right|_{L^p_0(\{ u=t \})} \!\!\!, 
\end{align*}
for every $p\geq 3$. If $n\geq 4$, we can then choose $p= n-1$ in the above formula, obtaining 
\begin{equation*}
\left(\frac{|\Sph^{n-1}|}{|\{ u=t \}|} \right)^{\!\frac{1}{n-1}} \,\,\leq \,\, \frac{1}{t} \,  \left|\left|  \frac{\HHH}{n-1} \right|\right|_{L^{n-1}_0(\{ u=t \})} \!\!\!.
\end{equation*}
Simplifying the above expression, we arrive at the desired inequality~\eqref{eq:will_t}. The rigidity statement in the equality case is now an easy consequence of Corollary~\ref{cor:Lp}.

To prove the inequality in (ii) we observe that, combining formula~\eqref{eq:L1_Lp} with identity~\eqref{eq:U1_mass} and  letting $t=0$, one gets
\begin{equation*}
\left(\frac{|\Sph^{n-1}|}{|\pa M|} \right)^{\!\frac{1}{p}}  \leq \,\,  ({2m})^{ \frac{(p-1) \, (n-1)}{p \, (n-2)}-1} \,  \left|\left|  \frac{\, 2 \, \D u \, }{n-2} \right|\right|_{L_0^p(\pa M)} ,
\end{equation*}
for every $p\geq 3$. Using~\eqref{eq:R_Lp} in Corollary~\ref{cor:R_Lp} and setting $p= n-1$, with $n\geq 4$, in the above formula, we obtain
\begin{equation*}
\left(\frac{|\Sph^{n-1}|}{|\pa M|} \right)^{\!\frac{1}{n-1}}  \leq  \,\,\, \sqrt{ \,\, \left|\left|  \frac{\quad\RRR^{\pa M}}{(n-1)(n-2)} \right|\right|^{\phantom{1}}_{L_0^{\frac{n-1}{2}}(\pa M)} } ,
\end{equation*}
which is the desired inequality~\eqref{eq:R_Lp}, up to a simple normalization. The rigidity statement in the equality case follows at once from Corollary~\ref{cor:R_Lp}.
\end{proof}


\section{A conformally equivalent formulation of the problem}
\label{sec:conf_reform}


\subsection{A conformal change of metric.}
\label{sub:conf}

The aim of this section is to reformulate system~\eqref{eq:pb_static_vacuum} in a conformally equivalent setting. First of all, we notice that if $(M,\go, u)$ is an {\em asymptotically flat static solution} in the sense of Definition~\ref{ass:AF}, then one has that $1-u^2 > 0$ everywhere in $M$, by the Strong Maximum Principle.
Motivated by the explicit formul\ae~\eqref{eq:sol_schwarz} of the Schwarzschild solution, we are led to consider the following conformal change of metric
\begin{equation}
\label{eq:g_schw}
g
\,=\,
({1-u^2})^{\frac2{n-2}} \, \go \, .
\end{equation}
It is immediately seen that when $u$ and $\go$ are as in~\eqref{eq:sol_schwarz} then $g$ is a cylindrical metric. Hence, we will refer to the conformal change~\eqref{eq:g_schw} as to a {\em cylindrical ansatz}. In any case, if $(M,g_0,u)$ is {\em asymptotically flat}, it is not hard to deduce from the expansions~\eqref{eq:AF} and~\eqref{eq:uexp} that the metric $g$ is {\em asymptotically cylindrical}. In fact, with the notations introduced in Definition~\ref{ass:AF}, we have that in $M \setminus K$ the metric $g$ satisfies the expansion
\begin{align}
\label{ass:AC}
\nonumber g \, = \, ({1-u^2})^{\frac2{n-2}} \, g_0 \,& = \, \big(  \, 1 - (1 -m|x|^{2-n} + w )^2   \, \big)^{\frac2{n-2}} \,( \delta_{\alpha \beta}+\eta_{\alpha \beta} ) \,\cdot \, dx^\alpha \!\otimes dx^\beta\\
\nonumber & = \, (2m)^{\frac{2}{n-2}} \,  \big[ \, (1 + o_2(1)) \,  |x|^{-2}{\delta_{\alpha \beta}}  +   \, (1 + o_2(1)) \,  |x|^{-2}{\eta_{\alpha \beta}}   \, \big] \,\cdot \, dx^\alpha \!\otimes dx^\beta  \\
&= \, (2m)^{\frac{2}{n-2}} \, (1 + o_2(1)) \, \left[ \,{d|x|\otimes d|x|}  \, + \,  g_{\Sph^{n-1}}  \, \right] \,  +   \, 
\sigma_{\alpha\beta}\,\cdot \, dx^\alpha \!\otimes dx^\beta  ,
\end{align}
with $\sigma_{\alpha \beta} = o_2 \big(|x|^{-(n+2)/2}\big)$, as $|x| \to +\infty$. To describe how this fact will be exploited in the proof of our results, let us first observe that another straightforward implication of the expansions~\eqref{eq:AF} and~\eqref{eq:uexp} is that there exists $0< t_0 <1$ such that for every $ t_0 \leq t < 1$ the level set $\{ u= t \}$ is regular (meaning that $|\D u|>0$ on the level set) and diffeomorphic to a $(n-1)$-dimensional sphere. In particular, the manifold with boundary $\{ u\geq t_0 \}$ is diffeomeorphic to the cylinder $[t_0 , 1) \times \{ u = t_0\}$. Hence, it is possible to choose in this region a local system of coordinates $\{ u,\vartheta^1,\!....,\vartheta^{n-1}\}$, where $\{\vartheta^1,\!...., \vartheta^{n-1} \}$ are local coordinates on $\{ u = t_0 \}$. In such a system, the metric $\go$ can be written as
\begin{equation}
\label{eq:expr_go}
\go \, = \,\frac{du \otimes du}{|\D u|^2} +\cgo_{ij}(u,\vartheta^1\!,\!..., \vartheta^{n-1})\,d\vartheta^i\!\otimes d\vartheta^j ,
\end{equation}
where the latin indices vary between $1$ and $n-1$. On the other hand, it is not hard to check that the expansions~\eqref{eq:AF} and~\eqref{eq:uexp} also imply 
\begin{align*}
\frac{\D u}{|\D u|} & \, = \,  \left[ \frac{\,\,\, x^\alpha}{|x|} + o_1(1) \right] \, \frac{\pa}{ \pa x^\alpha} \, , \quad \hbox{as $|x| \to + \infty$}\,.
\end{align*}
This means that the level sets of $u$ tend to coincide with the level sets of $|x|$, as $|x| \to + \infty$ or, equivalently, as $u \to 1$. In light of these remarks, it follows from~\eqref{ass:AC} and~\eqref{eq:expr_go} that 
\begin{equation}
\label{eq:ACg}
g_{ij}(u,\vartheta^1\!,\! ... , \vartheta^{n-1}) \cdot d\vartheta^i\!\otimes d\vartheta^j \, = \, (1-u^2)^{\frac{2}{n-2}} \, \cgo_{ij}(u,\vartheta^1\!,\! ... , \vartheta^{n-1}) \cdot d\vartheta^i\!\otimes d\vartheta^j  \, \longrightarrow \, (2m)^{\frac{2}{n-2}} \, \g_{\Sph^{n-1}}  \,, 
\end{equation}
as $u \to 1$. In particular, as it is natural to expect for an {\em asymptotically cylindrical} metric, we have that the $g$-hypersurface area functional for the level sets of $u$ is uniformly bounded at infinity, namely
\begin{equation}
\label{eq:uareagrowth}
\sup_{ t_0 \leq t < 1} \!\! \int\limits_{ \{ u = t\}} \!\!\!\rmd \sigma_{\!\g} \, < \, + \infty \, ,
\end{equation}
where $\rmd \sigma_{\!\g}$ denotes the volume element of the metric induced by $g$ on the level sets. 
\begin{remark}
\label{rem:pr_round}
From this preliminary discussion it is clear that in the case where the metric $g$ has a product structure, with the level sets of $u$ as cross sections, then the coefficients $g_{ij}$'s in formula~\eqref{eq:ACg} do not depend on the variable $u$. This implies in turn that the metric $g$ is everywhere rotationally symmetric.
\end{remark}

Our next task is to reformulate the problem~\ref{eq:pb_static_vacuum} in terms of the metric $g$.
To this aim we fix local coordinates $\{y^{\alpha}\}_{\alpha=1}^n$ in $M$
and using standard formul\ae\ for conformal changes of metrics, 
we deduce that the Christoffel symbols $\Gamma_{\a\b}^\gamma$ and $\GGG_{\a\b}^\gamma$, of the metric $g$ and
$\go$ respectively, are related to each other via the identity
\begin{equation*}
\Gamma_{\alpha\beta}^{\gamma}
\,=\,\,
\GGG_{\alpha\beta}^{\gamma}
\, - \, \frac{2u}{(n-2)(1-u^2)} \, \Big(\, 
\delta_{\alpha}^{\gamma}\,\pa_{\beta}u
+\delta_{\beta}^{\gamma}\,\pa_{\alpha}u
-\cgo_{\alpha\beta}\,\ccgo^{\gamma\eta}\,\pa_{\eta}u \, \Big)\, .                                                
\end{equation*}
Comparing the local expressions for the Hessians of a given function $w \in {\mathscr C}^2(M)$ with respect to the metrics $g$ and $\go$, namely
$\na^2_{\alpha\beta}w=\pa^{\,2}_{\alpha\beta}w-
\Gamma_{\alpha\beta}^{\gamma}\pa_{\gamma}w$ and $\DD_{\alpha\beta}w=\pa^{\,2}_{\alpha\beta}w-
\GGG_{\alpha\beta}^{\gamma}\pa_{\gamma}w$,
one gets 
\begin{align*}
\na^2_{\alpha\beta}w
     &\,=\,\DD_{\alpha\beta}w \, + \, \frac{2u}{(n-2)(1-u^2)}
            \, \Big(\, \pa_{\alpha}w\,\pa_{\beta}u
                 +\pa_{\beta}w\,\pa_{\alpha}u
              -\langle\D w\, | \, \D u\rangle\,
              \cgo_{\alpha\beta}\, \Big)\,, \\
\Deg w &\,=\,(1-u^2)^{-\frac2{n-2}}
\Big( \, \De w-\frac{2u}{1-u^2} \, \langle\D w \,| \,\D u\rangle   \, \Big) \, .                                
\end{align*}
We note that in the above expressions as well as in the following ones, the notations $\na$ and $\Delta_g$ represent the Levi-Cita connection and the Laplace-Beltrami operator of the metric $g$. In particular, letting $w=u$ and using $\De u = 0$, one has
\begin{align}
\label{eq:hess_change_1}
\na^2_{\alpha\beta}u
     &\,=\,\DD_{\alpha\beta}u \, + \, \frac{2u}{(n-2)(1-u^2)}
            \,\Big( \, 2 \,\pa_{\alpha}u\,\pa_{\beta}u
             \, - \, |\D u|^2\,
              \cgo_{\alpha\beta} \, \Big) \, ,\\         
\label{eq:lapl_change_1}
\Deg u
     &\,=\,  - \, \frac{2u  }{(1-u^2)^{\frac{n}{n-2}}} \,\, |\D u |^2 
\, .                       
\end{align}
To continue, we observe that the Ricci tensor $\Ricg=\cRicg_{\alpha\beta}\,dy^{\alpha}\!\otimes dy^{\beta}$ of the metric $\g$ can be expressed 
\vspace{-0.4cm}\\
in terms of the Ricci tensor $\Ric = \cRic_{\a\b}\,dy^{\alpha}\!\otimes dy^{\beta}$ of the metric $\go$ as 
\begin{equation*}
\cRicg_{\a\b} \,\, = \,\, \cRic_{\a\b} \,  + \, \frac{2u}{1-u^2} \, \DD_{\a\b} u
\, + \, \Big(\frac{2}{n-2}\Big)  
\frac{n-2+ n u^2}{ (1-u^2)^2} \, \pa_\a u \, \pa_\b u  \, + \, \Big(\frac{2}{n-2}\Big)  \frac{|\D u|^2}{1-u^2} \, \cgo_{\a\b}  \, ,
\end{equation*}
where we have used the fact that $\De u = 0$. If in addition we plug the equation $u \,\Ric = \DD u$ in the above formula, we obtain
\begin{equation}
\label{eq:ric_change_1}
\cRicg_{\a\b} \,\, = \,\,  \frac{1+u^2}{u  (1-u^2)} \, \DD_{\a\b} u
\, + \, \Big(\frac{2}{n-2}\Big)  
\frac{n-2+ n u^2}{ (1-u^2)^2} \, \pa_\a u \, \pa_\b u  \, + \, \Big(\frac{2}{n-2}\Big)  \frac{|\D u|^2}{1-u^2} \, \cgo_{\a\b}  \, .
\end{equation}
To obtain nicer formul\ae, it is convenient to introduce the new variable 
\begin{equation}
\label{eq:newvariable}
\ffi \, =\, \log\Big(\frac{1+u}{1-u} \Big)
\qquad\Longleftrightarrow\qquad
u\,=\,\tanh\Big(\frac{\ffi}2\Big)\,.
\end{equation}
As a consequence we have that 
\begin{align}
\label{eq:defideu}
\pa_\a \ffi & \, = \, \frac{2 }{1-u^2}  \, \pa_\a u \, , \\
\label{eq:dedefidedeu}
\nana_{\a\b} \ffi & \, = \,  \frac{2 }{1-u^2} \, \DD_{\a\b} u \, + \, \Big(\frac{n}{n-2}\Big) \, \frac{4u}{(1-u^2)^2} \,  \Big(    \pa_\a u \, \pa_\b u  \,  - \, \frac{ \,\, |\D u|^2}{n} \cgo_{\a\b}  \, \Big)\, .
\end{align}
For future convenience, we report the relation between $|\na \ffi|^2_\g$ and $|\D u|^2$ as well as the one between $|\nana \ffi|_\g^2$ and $|\DD u|^2$, namely
\begin{align*}
|\na \ffi|^2_\g &\, = \, 4 \,\,  \frac{ |\D u|^2 \phantom{|\D|}}{(1-u^2)^{2\frac{n-1}{n-2}}} \, , \\
|\nana \ffi |_\g^2 
& \, = \, 4 \, \frac{|\DD u|^2 }{(1-u^2)^{\frac{2n}{n-2}}} \, + \, \frac{16  n }{n-2} \,  \frac{  u \, \DD u  (\D u, \D u)}{(1-u^2)^{\frac{3n-2}{n-2}}} \, + \, \frac{16n(n-1)}{(n-2)^2} \, \frac{  u^2 \,|\D u|^4}{(1-u^2)^{\frac{4n-4}{n-2}}} \, .
\end{align*}
On the other hand, in virtue of the expansions~\eqref{eq:AF} and~\eqref{eq:uexp}, it is immediate to deduce that the quantities 
\begin{equation*}
\frac{ |\D u| \phantom{|\D|}}{(1-u^2)^{\frac{n-1}{n-2}}} \qquad \hbox{and} \qquad \frac{|\DD u| }{(1-u^2)^{\frac{n}{n-2}}}
\end{equation*}
are uniformly bounded in $M$. Taking this fact into account, one can easily estimate the above expressions for $|\na \ffi|_g$ and $|\nana \ffi|_g$, obtaining the bound
\begin{equation*}
\sup_M \big(|\na \ffi|_\g + |\nana \ffi|_\g \big)\, < \, +\infty \, .
\end{equation*}
This fact will be used in the following discussion in combination with the uniform bound on the $g$- hypersurface area functional of the level sets~\eqref{eq:uareagrowth}. For the ease of reference, we summarize these estimates in the following lemma.
\begin{lemma}
\label{le:bound}
Let $(M,g_0,u)$ be an asymptotically flat static solution to problem~\eqref{eq:pb_static_vacuum} and let $g$ and $\ffi$ be the metric and the smooth function defined in~\eqref{eq:g_schw} and ~\eqref{eq:newvariable}, respectively. Then, there exist $0 \leq s_0 < + \infty$ and a positive constant $0<C<+\infty$ such that 
\begin{equation}
\label{eq:cylbounds}
\sup_M |\na \ffi|_\g \, + \, \sup_M |\nana \ffi|_\g
\, + \,  \sup_{ s_0 \, \leq \, s } \!\! \int\limits_{ \{ \ffi = s \}} \!\!\!\rmd \sigma_{\!\g} \,\, \leq \,\, C \, .
\end{equation}
\end{lemma}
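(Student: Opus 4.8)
The plan is to collect the three estimates in~\eqref{eq:cylbounds} from the pointwise relations and the asymptotic expansions already derived above, and then to promote the \emph{asymptotic} bounds to \emph{global} ones using the compactness properties of $M$. The last summand, namely the uniform bound on the $g$-hypersurface area of the level sets $\{\ffi = s\}$, is essentially~\eqref{eq:uareagrowth} rewritten in the $\ffi$-variable: since the change of variable~\eqref{eq:newvariable} is a strictly increasing smooth diffeomorphism between $(u_0,1)$ and $(\ffi_0, +\infty)$, the level set $\{\ffi = s\}$ equals $\{u = \tanh(s/2)\}$, so $\sup_{s_0 \le s} \int_{\{\ffi = s\}} \rmd\sigma_g = \sup_{t_0 \le t < 1} \int_{\{u=t\}} \rmd\sigma_g < +\infty$ once $s_0$ is chosen so that $\tanh(s_0/2) \ge t_0$, with $t_0$ as in the discussion preceding~\eqref{eq:uareagrowth}.

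For the gradient and Hessian terms, I would start from the two displayed identities just above the statement, which express $|\na\ffi|_g^2$ in terms of $|\D u|^2/(1-u^2)^{2(n-1)/(n-2)}$ and $|\nana\ffi|_g^2$ as a sum of three terms involving $|\DD u|^2/(1-u^2)^{2n/(n-2)}$, $u\,\DD u(\D u,\D u)/(1-u^2)^{(3n-2)/(n-2)}$, and $u^2|\D u|^4/(1-u^2)^{(4n-4)/(n-2)}$. The excerpt already observes that, by~\eqref{eq:AF} and~\eqref{eq:uexp}, the two quantities $|\D u|/(1-u^2)^{(n-1)/(n-2)}$ and $|\DD u|/(1-u^2)^{n/(n-2)}$ are bounded near infinity; combining this with $|u| \le 1$ and the Cauchy--Schwarz bound $|\DD u(\D u,\D u)| \le |\DD u|\,|\D u|^2$, every summand in the two identities is controlled by products of these bounded quantities, so $|\na\ffi|_g$ and $|\nana\ffi|_g$ are bounded outside a fixed compact set $K$. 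To conclude the global bound, I would recall that $u$ is a smooth (indeed, by ellipticity, real-analytic) solution on $M$ with $u_0 < u < 1$ by the Strong Maximum Principle, hence $1-u^2$ is bounded away from zero on any compact set; therefore $|\na\ffi|_g$ and $|\nana\ffi|_g$ are continuous on the compact set $\ove{M \setminus (M\setminus K)} = \ove{K}$ (using that $\pa M$ is smooth and the equations in~\eqref{eq:pb_static_vacuum} hold up to the boundary), hence bounded there as well. Taking the maximum of the two bounds gives $\sup_M(|\na\ffi|_g + |\nana\ffi|_g) < +\infty$.

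Finally, one sets $C$ to be the sum of the three suprema and $s_0$ as above, which yields~\eqref{eq:cylbounds}. I do not expect a genuine obstacle here: the lemma is a bookkeeping statement assembling facts already established in the section. The one point that requires a little care is the behavior near $\pa M$ in the case $u_0 = 0$, where $1 - u^2 \to 1$ but one must know that $\D u$ and $\DD u$ extend continuously to $\pa M$ — this is guaranteed by the standing convention, recalled after~\eqref{eq:pb_static_vacuum}, that the first two equations hold on all of $M$ with matching boundary limits, together with the Hopf Lemma giving $|\D u| > 0$ on $\pa M$; hence no blow-up occurs at the boundary either, and the compactness argument goes through uniformly.
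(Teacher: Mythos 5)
Your proposal is correct and follows essentially the same route as the paper: the paper's proof of Lemma~\ref{le:bound} is precisely the discussion preceding it, namely the pointwise identities expressing $|\na\ffi|_g^2$ and $|\nana\ffi|_g^2$ in terms of $|\D u|/(1-u^2)^{(n-1)/(n-2)}$ and $|\DD u|/(1-u^2)^{n/(n-2)}$ (which are bounded by the expansions~\eqref{eq:AF}--\eqref{eq:uexp} near infinity and by compactness plus $u<1$ elsewhere), together with the area bound~\eqref{eq:uareagrowth} transported to the $\ffi$-variable via $\{\ffi=s\}=\{u=\tanh(s/2)\}$. The extra care you take at $\pa M$ and on the compact core is exactly the detail the paper leaves implicit.
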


Combining expressions~\eqref{eq:hess_change_1}, \eqref{eq:lapl_change_1}, \eqref{eq:ric_change_1} together with~\eqref{eq:defideu} and ~\eqref{eq:dedefidedeu}, we are now in the position to reformulate problem~\eqref{eq:pb_static_vacuum} as
\begin{equation}
\label{eq:pb_schw_reform}
\left\{
\begin{array}{rcll}
\displaystyle
\Ricg \, - \, \coth (\ffi) \nana\ffi \, + \, \frac{d\ffi\otimes d\ffi}{n-2}
\!\!\!\!& \, = \, &\displaystyle\!\!\!\!
\frac{|\na\ffi|^2_g}{n-2}\,\g & {\rm in }\quad M,\\
\displaystyle
\phantom{\frac12}\Deg \, \ffi\!\!\!\!& \, = \, &\!\!\!\!0 & {\rm in }\quad M,\\
\displaystyle
 \phantom{\frac12}\ffi\!\!\!\!& \, = \, &\!\!\!\! \ffi_0 &{\rm on }\ \ \pa M,\\
\displaystyle
\phantom{\frac12}\ffi(x)\!\!\!\!&\to&\!\!\!\!+\infty & 
\mbox{as }\ |x|\to +\infty \, ,
\end{array}
\right.
\end{equation}
where, according to~\eqref{eq:newvariable}, we have set $\ffi_0 =  \log\left[({1+u_0})/({1-u_0}) \right]$.
Moreover, the solution $(M,g,\ffi)$ is such that $(M,g)$ is {\em asymptotically cylindrical} in the sense described at the beginning of this section and in particular the estimate~\eqref{eq:cylbounds} is in force. In other terms, the {\em asymptotically flat static solution} $(M,g_0,u)$ corresponds via the conformal change
\begin{equation*}
g \, = \, \big(\!\cosh({\ffi}/{2})\big)^{-\frac{4}{n-2}} g_0
\end{equation*}
to an {\em asymptotically cylindrical} quasi Einstein type manifold $(M,g,\ffi)$ with unbounded $g$-harmonic potential function $\ffi$.

To describe the idea that will lead us throughout the analysis of system~\eqref{eq:pb_schw_reform}, we note that taking the trace of the first equation one gets
\begin{equation}
\label{eq:tilde_R}
\frac{\Rg}{n-1} \, = \, \frac{|\na\ffi|^2_g}{n-2} \, ,
\end{equation}
where $\Rg$ is the scalar curvature of the metric $\g$. It is important to observe that in the cylindrical situation, which is the conformal counterpart of the Schwarzschild solution, $\Rg$ has to be constant. In this case, the above formula implies that also $|\na \ffi|_g$ has to be constant. Plugging this information into the Bochner formula, it is then immediate to conclude that $\ffi$ has to be an affine function for the metric $\g$. For these reasons, also in the  situation, where we do not know a priori if $g$ is cylindrical, it is natural to think of $\na \ffi$ as to a candidate splitting direction and to investigate under which conditions this is actually the case.

\subsection{The geometry of the level sets of $\ffi$.}

In the forthcoming analysis a crucial role is be played by the study the geometry of the level sets of $\ffi$, which coincide with the level sets of $u$, by definition. 
Hence, we pass now to describe the second fundamental form and the mean curvature of the regular level sets of $\ffi$ (or equivalently of $u$) in both the original Riemannian context $(M, \go)$ and the conformally related one $(M, \g)$.
To this aim, we fix a regular level set $\{ \ffi = s_0\}$ of $\ffi$ and we note that it must be compact, by the properness of $\ffi$. In particular, there exists a real number $\delta>0$  such that in the tubular neighborhood $\mathcal{U}_\delta = \{s_0 - \delta < \ffi < s_0 + \delta \}$ we have $|\na \ffi |_\g > 0$ so that $\mathcal{U}_\delta$ is foliated by regular level sets of $\ffi$. As a consequence, $\mathcal{U}_\delta$ is diffeomorphic to $(s_0 -\delta , s_0 + \delta) \times \{ \ffi = s_0 \}$ and the function $\ffi$ can be regarded as a coordinate in $\mathcal{U}_\delta$. Thus, one can choose a local system of coordinates $\{\ffi,\vartheta^1\!,\!....,\vartheta^{n-1}\}$, where $\{\vartheta^1\!,\!...., \vartheta^{n-1} \}$ are local coordinates on $\{ \ffi = s_0 \}$. In such a system, the metric $\g$ can be written as
\begin{equation*}
\g \, = \,\frac{d\ffi \otimes d\ffi}{|\na \ffi|_\g^2} +\g_{ij}(\ffi,\vartheta^1 \!,\!...., \vartheta^{n-1})\,d\vartheta^i\!\otimes d\vartheta^j \,,
\end{equation*}
where the latin indices vary between $1$ and $n-1$. A similar expression has been obtained in~\eqref{eq:expr_go} for the metric $\go$ in terms of the local coordinates $\{u, \vartheta^1\!,\!...., \vartheta^{n-1} \}$. We now fix in $\mathcal{U}_\delta$ the $\go$-unit vector field 
$\nu =\D u/|\D u|=\D\ffi/|\D\ffi|$ and the $\g$-unit vector field $\nu_g =\na u/|\na u|_{\g}=\na\ffi/|\na\ffi|_g$. Accordingly, the second fundamental forms of the regular level sets of $u$ or $\ffi$ with respect to ambient metric $\go$ and the conformally-related ambient metric $g$ are respectively given by
\begin{equation*}
\cho_{ij}=\frac{\DD_{ij} u}{|\D u|}=\frac{\DD_{ij}\ffi}{|\D\ffi|}
\qquad\mbox{and}\qquad
\chg_{ij}=\frac{\nana_{ij} u}{|\na u|_\g}
=\frac{\nana_{ij}\ffi}{|\na\ffi|_\g}\, ,  \qquad \mbox{for}\quad i,j = 1,\!...., n-1.
\end{equation*}  
Taking the traces of the above expressions with respect to the induced metrics and using the fact that $u$ is $\go$-harmonic and $\ffi$ is $\g$-harmonic, 
we obtain the following expressions for the mean curvatures in  the two ambients
\begin{equation}
\label{eq:formula_curvature}
\Ho=-\frac{\DD u(\D u,\D u)}{|\D u|^3}\,,
\qquad\qquad
\Hg=-\frac{\nana\ffi(\na\ffi,\na\ffi)}{|\na\ffi|_{\g}^3}\,.
\end{equation}
Tacking into account  expressions~\eqref{eq:defideu} and~\eqref{eq:dedefidedeu}, one can show that 
the second fundamental forms are related by
\begin{align}
\label{eq:formula_h_h_g}
\chg_{ij}
&\,=\,
(1-u^2)^{\frac1{n-2}}
\bigg[ \, \cho_{ij}-   \Big( \frac{1}{n-2}    \Big)  
 \, \frac{2 u \, |\D u|  }{1-u^2} \,  \cgo_{ij}   \bigg] \, .
\end{align}
The analogous formula for the mean curvatures reads
\begin{align}
\label{eq:formula_H_H_g}
{\HHH_g}
&\,=\,
(1-u^2)^{-\frac1{n-2}}
\bigg[\, {\Ho} - \Big( \frac{n-1}{n-2}    \Big)  
\, \frac{2u \,|\D u|  }{1-u^2}  \, \bigg] \, .
\end{align}
For the sake of completeness, we also report the reversed formul\ae\, for the second fundamental forms
\begin{align*}
\cho_{ij}
&\,=\,
\big(\!  \cosh(\ffi/2)\big)^{\frac{2}{n-2}}
\bigg[ \, \chg_{ij} 
+ \Big(\frac{1}{n-2}\Big) \, \tanh(\ffi/2) \, |\na \ffi|_\g \, \g_{ij} \, \bigg]
\end{align*}
as well as for the mean curvatures
\begin{align*}
{\Ho}
&\,=\,
\big( \! \cosh(\ffi/2)\big)^{-\frac{2}{n-2}}
\bigg[\, {\Hg}   + \Big(\frac{n-1}{n-2}\Big)\, \tanh(\ffi/2) \, |\na \ffi|_\g  \,\bigg] \, .
\end{align*}

Concerning the nonregular level sets of $\ffi$, we first observe that by the results in~\cite{Hardt_Simon} and~\cite{Lin}, one has that the $(n-1)$-dimensional Hausdorff measure of the level sets of $\ffi$ is locally finite. Hence, the properness of $\ffi$ forces the level sets to have finite $(n-1)$-dimensional Hausdorff measure. To go further in the description of the nonregular level sets of $\ffi$, we set ${\rm Crit}(\ffi) = \{  x\in M \, | \, \na \ffi (x)= 0  \}$ and observe that if $s_0$ is a singular value of $\ffi$ and thus the level set $\{ \ffi =  s_0 \}$ is nonregular, it happens that the nonempty closed set ${\rm Crit}(\ffi) \cap \{ \ffi =  s_0 \}$ has vanishing $(n-1)$-dimensional Hausdorff measure. In fact, since $\ffi$ is harmonic, one has by~\cite[Theorem 1.17]{Che_Nab_Val} that the Minkovsky dimension -- and thus also the Hausdorff dimension -- of its critical set ${\rm Crit}(\ffi) = \{ \na \ffi = 0\}$ is bounded above by $(n-2)$. It is worth noticing that the same conclusion about the Hausdorff dimension of ${\rm Crit}(\ffi)$ had been previously obtained in~\cite{Nadirashvili}. In particular, the previous formul\ae\, for the second fundamental form and mean curvature also make sense $\mathscr{H}^{n-1}$-almost everywhere in $\{ \ffi = s_0 \}$, namely on the relatively open set $\{ \ffi =  s_0\} \setminus {\rm Crit}(\ffi)$.

We conclude this section with some considerations about the geometry of $\pa M$ in the case of null Dirichlet boundary conditions for $\ffi$. We recall that, as $\ffi$ is nonconstant in $M$ and $\pa M$ is assumed to be smooth, the Hopf Lemma implies that $|\na \ffi|_g>0$ on $\pa M$. To continue, we observe that from the first equation in~\eqref{eq:pb_schw_reform} and from its traced version~\eqref{eq:tilde_R} it is immediate to deduce that
\begin{align}
\label{eq:hessg}
\nonumber |\nana \ffi |_\g^2 & \,\, = \,\, 
\tanh^2(\ffi) \, \bigg( \, \frac{2}{n-2} \, \Ricg (\na \ffi, \na\ffi) \, + \, |\Ricg|_g^2 \, - \, \frac{\Rg^2}{n-1} \,\bigg) \, .
\end{align}
In particular, we have that $\nana \ffi \equiv 0$ on $\{\ffi = 0 \}$. Hence, the boundary of $(M,g)$ is totally geodesic and $|\na \ffi|_g$ is constant on each connected component of $\pa M$. Notice once again that if $(M,g)$ is a cylinder and $\na \ffi$ is the splitting direction, one has 
\begin{equation*}
\Ricg (\na \ffi, \na\ffi) \, = \, 0 \qquad \hbox{and} \qquad |\Ricg|_g^2 \, = \, \frac{\Rg^2}{n-1}  \, ,
\end{equation*}
so that the right hand side of the above identity vanishes everywhere and $\ffi$ is an affine function in $(M,g)$, as expected.

\subsection{A conformal version of the Monotonicity-Rigidity Theorem.} 

\label{sub:reform}

We conclude this section by introducing the conformal analog of the functions
\begin{equation*}
t \,\, \longmapsto \,\, U_p(t) \, = \, \Big(\frac{2m}{1-t^2}\Big)^{\!\!\!\frac{(p-1)(n-1)}{(n-2)}}\!\!\!\!\!\! \int\limits_{ \{ u = t \}} \!\!\!\!  |\D u|^p \, \rmd \sigma ,
\end{equation*}
introduced in~\eqref{eq:Up}. To this aim,  we let $(M, \g, \ffi)$ be an aymptotically cylindrical solution to problem~\eqref{eq:pb_schw_reform} and we define, for $p \geq 0$, the functions $\Phi_p : [\ffi_0, +\infty) \longrightarrow \R$ as 
\begin{equation}
\label{eq:fip}
s \,\, \longmapsto \,\, \Phi_p(s) \,\,  =\!\!\!
\int\limits_{\{\ffi = s\}}\!\!\!
|\na \ffi|_g^p \,\,\rmd \sigma_{\!g} \, .
\end{equation}
As for the $U_p$'s, we observe that the $\Phi_p$'s are well defined. This is because $| \na \ffi|_g$ is bounded (see Lemma~\ref{le:bound}) and, by the results in~\cite{Hardt_Simon,Lin}, the hypersurface area of the level set is finite, due to the harmonicity and properness of $\ffi$. Before proceeding, it is worth noticing that, when $p=0$, the function 
$$
\Phi_0(s) \, = \!\!\!\int\limits_{\{\ffi = s\}}\!\!\!\!
\rmd \sigma_{\!g} \, = \, |\{ \ffi = s\}|_g ,
$$ 
coincides with the hypersurface area functional $|\{ \ffi = s\}|_g$ for the level sets of $\ffi$ inside the ambient manifold $(M,g)$. For $p=1$, it follows from $\Delta_g \ffi = 0$ and the Divergence Theorem that the function 
$$
\Phi_1(s) \, = \!\!\!\int\limits_{\{\ffi = s\}}\!\!\!\!|\na \ffi|_g \,\, 
\rmd \sigma_{\!g} \phantom{0000}
$$ 
is constant. We also observe that the asymptotic cylindrical behavior of $g$ and $\ffi$ implies that 
\begin{equation}
\label{eq:fip_lim}
\lim_{s\to +\infty}\Phi_p(s) \, = \, (2m)^{\!\frac{n-1-p}{n-2}} \, (n-2)^p \,  |\Sph^{n-1}| \,,
\end{equation}
where $m$ is the mass coefficient appearing in expansion~\eqref{eq:uexp}. For future convenience, we observe that the functions $U_p$ and $\Phi_p$ and their derivatives (when defined) are related as follows
\begin{eqnarray}
\label{eq:upfip1}
U_p(t) & = & \frac{(2m)^{\frac{(p-1)(n-1)}{(n-2)}}}{2^p} \, \,\Phi_p \left(\log\left[({1+t})/({1-t}) \right] \right) \, , \\
\label{eq:upfip2}
U'_p(t) & = & \frac{(2m)^{\frac{(p-1)(n-1)}{(n-2)}}}{2^{p-1} (1-t^2)} \, \,\Phi'_p \left(\log\left[({1+t})/({1-t})\right] \right)  \, , \\
\label{eq:upfip3}
U''_p(t) & = & \frac{(2m)^{\frac{(p-1)(n-1)}{(n-2)}}}{2^{p-2} (1-t^2)^2} \, \left(\, t \, \Phi'_p \left(\log\left[({1+t})/({1-t})\right] \right) +   \Phi''_p \left(\log\left[({1+t})/({1-t})\right] \right)  \, \right)\, .
\end{eqnarray}
Using the above relationships the Monotonicity-Rigidity Theorem~\ref{thm:main} can be rephrased in terms of the functions $s \mapsto \Phi_p(s)$ as follows.

\begin{theorem}[Monotonicity-Rigidity Theorem -- Conformal Version]
\label{thm:main_conf}
Let $(M,g,\ffi)$ be an asymptotically cylindrical solution to problem~\eqref{eq:pb_schw_reform} with $0 \leq \ffi_0<+\infty$. For every $p \geq 1$ we let $\Phi_p : [\ffi_0, 1) \longrightarrow \R$ be the function defined  in~\eqref{eq:fip}. Then, the following properties hold true.
\begin{itemize}
\item[(i)] For every $p\geq 1$, the function $\Phi_p$ is continuous.  

\smallskip

\item[(ii)] For every $p \geq 3$, the function $\Phi_p$ is differentiable and the derivative satisfies, for every $s \in [\ffi_0,+ \infty)$,
\begin{align}
\label{eq:der_fip}
 \Phi_p'(s) \,\, 
& =  \,\, - \, (p-1)\!\!\!\! \int\limits_{\{\ffi = s \}}\!\!\!\!   {\, |\na\ffi|_g^{p-1}\, \Hg} \,\rmd\sigma_{\!g} \, \leq \, 0 \,.
\end{align}
where $\HHH_g$ is the mean curvature of the level set $\{\ffi=s\}$. Moreover, if $\Phi_p'(s)=0$, for some $s\in [\ffi_0, + \infty) $ and some $p \geq 3$, then $(M,g,\ffi)$ is isometric to one half round cylinder with totally geodesic boundary.

\smallskip

\item[(iii)] Suppose that $\ffi=0$ at $\pa M$. Then $\Phi_p'(0) = \lim_{s \to 0^+} \Phi_p'(s)= 0$, for every $p\geq 3$. In particular, setting $\Phi''_p(0) = \lim_{s \to 0^+}\Phi_p'(s)/s$, we have that for every $p \geq 3$, it holds
\begin{equation}
\label{eq:der2_fip}
\phantom{\qquad} \Phi_p''(0) \, = \,   ({p-1})  
\int\limits_{\pa M}
{|\na \ffi|_\g^{p-2}\,\,\Ric_g(\nu_g,\nu_g) }\,\,\rmd\sigma_g \,\, \leq \,\, 0 \, ,
\end{equation}
where $\nu_g = \na\ffi/|\na \ffi|_g$ is the inward pointing unit normal of the boundary $\pa M$.
Moreover, if $\Phi_{p}''(0) =0$ for some $p \geq 3$, then $(M,g,\ffi)$ is isometric one half round cylinder with totally geodesic boundary.
\end{itemize}
\end{theorem}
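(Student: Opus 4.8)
The plan is to deduce all of (i)--(iii) from a single \emph{first variation identity} for the functions $\Phi_p$ combined with a \emph{Bochner-type monotonicity} for the weighted subsolution $|\na\ffi|_g^{p-1}$. First I would record that, since $\ffi$ is $g$-harmonic, the vector field $X=|\na\ffi|_g^{p-1}\na\ffi$ satisfies ${\rm div}_g X=-(p-1)\,|\na\ffi|_g^{p}\,\Hg$ off the critical set, using the expression $\Hg=-\nana\ffi(\na\ffi,\na\ffi)/|\na\ffi|_g^3$ of~\eqref{eq:formula_curvature}. Integrating this over a slab $\{s<\ffi<S\}$, applying the divergence theorem and then the coarea formula yields $\Phi_p(S)-\Phi_p(s)=-(p-1)\int_s^S\big(\int_{\{\ffi=\tau\}}|\na\ffi|_g^{p-1}\Hg\,\rmd\sigma_{\!g}\big)\rmd\tau$. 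Contracting the first equation of~\eqref{eq:pb_schw_reform} twice against $\na\ffi$ gives $\Ricg(\na\ffi,\na\ffi)=\coth(\ffi)\,\nana\ffi(\na\ffi,\na\ffi)$, whence $|\na\ffi|_g^{p-1}\Hg=-\tanh(\ffi)\,|\na\ffi|_g^{p-2}\Ricg(\nu_g,\nu_g)$; together with the uniform bounds on $|\na\ffi|_g$ and $|\nana\ffi|_g$ from Lemma~\ref{le:bound} and the boundedness of $|\Ricg|_g$, the inner integrand is bounded for $p\ge2$. After the analysis of the singular level sets using the results recalled in Remark~\ref{rem:uno} (finiteness of the $\mathscr{H}^{n-1}$-measure of $\{\ffi=\tau\}$ and Hausdorff dimension at most $n-2$ of ${\rm Crit}(\ffi)$), this gives continuity of $\Phi_p$ for every $p\ge1$ and differentiability together with formula~\eqref{eq:der_fip} for $p\ge3$.

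Next I would prove the monotonicity and the rigidity in (ii) via Bochner. Applying the Bochner identity to the $g$-harmonic $\ffi$ and substituting $\Ricg(\na\ffi,\na\ffi)=\coth(\ffi)\,\nana\ffi(\na\ffi,\na\ffi)=\tfrac12\coth(\ffi)\langle\na|\na\ffi|_g^2,\na\ffi\rangle_g$ gives $\Deg|\na\ffi|_g^2-\langle\na|\na\ffi|_g^2,\na\log(\sinh\ffi)\rangle_g=2\,|\nana\ffi|_g^2\ge0$. The drifted operator $L=\Deg-\langle\na\log(\sinh\ffi),\na\,\cdot\,\rangle_g$ is self-adjoint with respect to $(1/\sinh\ffi)\,\rmd\mu_g$, and from $L(f^{\alpha})=\alpha(\alpha-1)f^{\alpha-2}|\na f|_g^2+\alpha f^{\alpha-1}Lf$ with $f=|\na\ffi|_g^2$ and $\alpha=(p-1)/2$, both summands are nonnegative precisely when $\alpha\ge1$, i.e.\ $w:=|\na\ffi|_g^{p-1}$ satisfies $Lw\ge0$ for $p\ge3$. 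Integrating $Lw/\sinh\ffi$ over $\{s<\ffi<S\}$, the boundary flux on $\{\ffi=s\}$ reproduces $\Phi_p'(s)/\sinh s$ by the computation above, while the flux on $\{\ffi=S\}$ tends to $0$ as $S\to+\infty$ by Lemma~\ref{le:bound} and the exponential decay of $1/\sinh\ffi$; hence $-\Phi_p'(s)/\sinh s=\int_{\{\ffi>s\}}(Lw/\sinh\ffi)\,\rmd\mu_g\ge0$ for $s>0$, which is the monotonicity. If $\Phi_p'(s_\ast)=0$ for some $s_\ast>0$, the nonnegative integrand vanishes, forcing $\nana\ffi\equiv0$ on $\{\ffi>s_\ast\}\setminus{\rm Crit}(\ffi)$; by analytic regularity of solutions of the elliptic system~\eqref{eq:pb_schw_reform}, $\nana\ffi\equiv0$ on all of $M$, so $\na\ffi$ is a nontrivial parallel field, $(M,g)$ splits isometrically, and matching the asymptotic model~\eqref{eq:ACg} identifies it with a half round cylinder with totally geodesic boundary, as in Subsection~\ref{sub:strategy}.

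For the boundary case (iii) I would argue as follows. When $\ffi_0=0$, identity~\eqref{eq:hessg} forces $\nana\ffi\equiv0$, hence $\Hg\equiv0$, on $\pa M=\{\ffi=0\}$, so $\Phi_p'(0)=0$; rewriting the derivative as $\Phi_p'(s)=(p-1)\tanh(s)\int_{\{\ffi=s\}}|\na\ffi|_g^{p-2}\Ricg(\nu_g,\nu_g)\,\rmd\sigma_{\!g}$, dividing by $s$ and letting $s\to0^+$ (using $\tanh(s)/s\to1$ and continuity of the level-set integral at $0$) produces~\eqref{eq:der2_fip}. Finally, letting $s\to0^+$ in the identity $-\Phi_p'(s)/\sinh s=\int_{\{\ffi>s\}}(Lw/\sinh\ffi)\,\rmd\mu_g$ of the previous step, the left side tends to $-\Phi_p''(0)$ while, by monotone convergence, the right side tends to $\int_{\{\ffi>0\}}(Lw/\sinh\ffi)\,\rmd\mu_g$, which is finite because near $\pa M$ one has $Lw=O(\tanh^2\ffi)$ (from~\eqref{eq:hessg}) whereas $1/\sinh\ffi=O(\ffi^{-1})$. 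Hence $\Phi_p''(0)\le0$, and equality forces $Lw\equiv0$, i.e.\ $\nana\ffi\equiv0$ off ${\rm Crit}(\ffi)$ and then on $M$, so the same splitting argument yields the half round cylinder with totally geodesic boundary.

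I expect the principal obstacle to be the rigorous justification of the divergence-theorem and coarea manipulations across ${\rm Crit}(\ffi)$ and the singular level sets of $\ffi$: one has to verify that ${\rm Crit}(\ffi)$, of Hausdorff dimension at most $n-2$ by~\cite{Che_Nab_Val} (see also~\cite{Nadirashvili}), produces no spurious boundary contribution, that the level-set integrals depend continuously (resp.\ differentiably) on the level, and that the density relating $\rmd\sigma_{\!g}$ to $\rmd\mathscr{H}^{n-1}$ is controlled $\mathscr{H}^{n-1}$-almost everywhere -- all of which rests on the structure theorems of Hardt--Simon~\cite{Hardt_Simon} and Lin~\cite{Lin}. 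A secondary, softer point is the vanishing of the flux at infinity, which is precisely what the asymptotically cylindrical bounds of Lemma~\ref{le:bound} are designed to supply.
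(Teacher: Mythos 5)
Your proposal is correct and follows essentially the same route as the paper: a first-variation/coarea identity reducing continuity and differentiability of $\Phi_p$ to the continuity of the level-set integral of $|\na\ffi|_g^{p-1}\Hg$, the drifted Bochner identity~\eqref{eq:Bochner_schw_p} integrated against $(1/\sinh\ffi)\,\rmd\mu_g$ to express that integral as a nonnegative bulk term (the paper's Proposition~\ref{prop:cyl}), rigidity via the vanishing of $\nana\ffi$, analyticity and the splitting along the parallel gradient, and the limit $s\to 0^+$ for part (iii) exactly as in Corollary~\ref{cor:riccinunu}. The only cosmetic difference is that you use the unweighted vector field $|\na\ffi|_g^{p-1}\na\ffi$ for the first variation where the paper's Proposition~\ref{prop:byparts} weights it by $1/\sinh(\ffi)$; the substance, including the treatment of ${\rm Crit}(\ffi)$ via the refined divergence theorem and the decay of the flux at infinity from Lemma~\ref{le:bound}, is the same.
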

\begin{remark}
\label{rem:der2}
To see the equivalence of Theorem~\ref{thm:main}-(iii) and Theorem~\ref{thm:main_conf}-(iii) one has to observe that $U_p''(0)$ and $\Phi_p''(0)$ are proportional, by identity~\eqref{eq:upfip3}. Moreover, from the computations in subsection~\ref{sub:further}, it follows that
\begin{equation*}
U_p''(0) \, = \,  - \Big(\frac{p-1}{2} \Big) \, (2m)^{\!\frac{(p-1)(n-1)}{(n-2)}} \!\!\!\int\limits_{\pa M}
\!
  |\DDD u|^{p-2}
\left[\, \RRR^{\pa M}\! - 
4 \, \Big(\frac{n-1}{n-2}\Big)
\, { \,|\DDD u|^2}
\, \right]  \rmd\sigma .
\end{equation*}
Combining these facts, it is easy to realize that~\eqref{eq:der2up} and~\eqref{eq:der2_fip}, as well as the respective rigidity statements, are equivalent.
\end{remark}

Since it is now clear that Theorem~\ref{thm:main} is completely equivalent to Theorem~\ref{thm:main_conf}, the rest of the paper is devoted to the proof of Theorem~\ref{thm:main_conf}. We will also prove at the same time the following theorem, which is the conformal version of Theorem~\ref{thm:refined}.

\begin{theorem}
\label{thm:refined_conf}
Let $(M,g,\ffi)$ be an asymptotically cylindrical solution to problem~\eqref{eq:pb_schw_reform} with $0 \leq \ffi_0<+\infty$. Let $\overline{\ffi}_0 \in [\ffi_0,+ \infty)$ be such that $|\na \ffi|_g>0$ in the region $\{  \overline{\ffi}_0 < \ffi < + \infty\}$ and let $\Phi_p~\!:~\![\ffi_0, +\infty) \longrightarrow \R$ be the function defined in~\eqref{eq:fip}. Then, the following properties hold true.
\begin{itemize}
\item[(i)] For $p\geq 0$, the function $\Phi_p$ is continuous and differentiable in $(\overline{\ffi}_0,+\infty)$.  

\smallskip

\item[(ii)] For $p \geq 2-1/(n-1)$, we have that
$\Phi_p'(s) \leq 0 $ for every $s \in (\overline{\ffi}_0,+\infty)$. Moreover, if there exists $s \in (\overline{\ffi}_0, +\infty) $ such that $\Phi_{p}'(s) = 0$ for some $p \geq 2-1/(n-1)$, then $(M,g,\ffi)$ is isometric to one half round cylinder. 
\end{itemize}
\end{theorem}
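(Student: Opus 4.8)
The plan is to run the weighted--divergence argument underlying Theorem~\ref{thm:main_conf}, but this time entirely inside the regular region $\{\ffi>\overline{\ffi}_0\}$, where $|\na\ffi|_g>0$, and to replace the trivial estimate $|\nana\ffi|_g^2\geq0$ by the \emph{refined Kato inequality for harmonic functions}, $|\nana\ffi|_g^2\geq\frac{n}{n-1}\big|\na|\na\ffi|_g\big|_g^2$, which is precisely what enlarges the admissible range of exponents down to $p\geq2-1/(n-1)$.

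To settle (i), note that on $\{\ffi>\overline{\ffi}_0\}$ the function $\ffi$ has no critical points, so this open set is smoothly foliated by the regular level sets $\{\ffi=s\}$, each compact by properness of $\ffi$; using $\ffi$ itself as a coordinate together with coordinates on a fixed cross section, the co--area formula shows that $s\mapsto\Phi_p(s)$ is smooth on $(\overline{\ffi}_0,+\infty)$ for every $p\geq0$. Differentiating along the flow of $\na\ffi/|\na\ffi|_g^2$ and using $\Delta_g\ffi=0$ together with formul\ae~\eqref{eq:formula_curvature} --- which give $\partial_s\,\rmd\sigma_{\!g}=(\Hg/|\na\ffi|_g)\,\rmd\sigma_{\!g}$ and $\partial_s|\na\ffi|_g=-\Hg$ --- one recovers exactly formula~\eqref{eq:der_fip}, now valid for all $p\geq0$ throughout $(\overline{\ffi}_0,+\infty)$.

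For (ii), set $w=|\na\ffi|_g^2$ and $v=w^{\alpha}=|\na\ffi|_g^{\,p-1}$ with $\alpha=(p-1)/2$, a smooth positive function on $\{\ffi>\overline{\ffi}_0\}$. Combining the Bochner identity for the $g$--harmonic function $\ffi$ with the quasi--Einstein equation in~\eqref{eq:pb_schw_reform} gives the identity recorded in Subsection~\ref{sub:strategy}, $\Delta_gw-\langle\na w,\na\log\sinh\ffi\rangle_g=2|\nana\ffi|_g^2$, whence a direct computation produces
\begin{equation*}
\Delta_gv\,-\,\langle\na v,\na\log\sinh\ffi\rangle_g\,\,=\,\,w^{\alpha-2}\,\Big(\,2\alpha\,w\,|\nana\ffi|_g^2\,+\,\alpha(\alpha-1)\,|\na w|_g^2\,\Big)\,.
\end{equation*}
Writing $|\na w|_g^2=4w\,\big|\na|\na\ffi|_g\big|_g^2$ and inserting the refined Kato inequality, one checks that the right--hand side is nonnegative exactly when $\alpha\geq\frac{n-2}{2(n-1)}$, i.e. when $p\geq2-1/(n-1)$. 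The crucial point is that, after division by $\sinh\ffi>0$, the left--hand side becomes a $g$--divergence, ${\rm div}_g\!\big((\sinh\ffi)^{-1}\na v\big)$; integrating over $\{s<\ffi<s'\}$, applying the divergence theorem, and letting $s'\to+\infty$ --- the boundary term at infinity dropping thanks to Lemma~\ref{le:bound} and $1/\sinh s'\to0$ --- yields
\begin{equation*}
\Phi_p'(s)\,\,=\,\,-\,\sinh(s)\!\!\int\limits_{\{\ffi>s\}}\!\!\frac{\Delta_gv-\langle\na v,\na\log\sinh\ffi\rangle_g}{\sinh\ffi}\,\,\rmd\mu_g\,\,\leq\,\,0\,,
\end{equation*}
since $s>\overline{\ffi}_0\geq\ffi_0\geq0$ forces $\sinh s>0$ and $\sinh\ffi>0$ on all of $\{\ffi>s\}$.

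For the rigidity statement, if $\Phi_p'(s_0)=0$ for some $s_0>\overline{\ffi}_0$, the nonnegative continuous integrand above vanishes identically on $\{\ffi>s_0\}$; tracing back through the inequalities forces $\nana\ffi\equiv0$ there when $p>2-1/(n-1)$, while at the borderline exponent $p=2-1/(n-1)$ one lands in the equality case of the refined Kato inequality, which makes the level sets of $\ffi$ totally umbilic with $|\na\ffi|_g$ constant on each of them --- and combining this with the traced equation~\eqref{eq:tilde_R}, the full quasi--Einstein structure and the asymptotically cylindrical normalization~\eqref{eq:ACg} again gives $\nana\ffi\equiv0$ on $\{\ffi>s_0\}$. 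Since $(g,\ffi)$ is real--analytic (an elliptic regularity consequence of the static vacuum equations, hence of~\eqref{eq:pb_schw_reform}), the identity $\nana\ffi\equiv0$ propagates to all of $M$, so $\na\ffi$ is a nontrivial parallel vector field; then $(M,g)$ splits isometrically as the Riemannian product of a half--line with a compact cross section on which $\ffi$ is affine, and~\eqref{eq:ACg} identifies that cross section with a round $(n-1)$--sphere, so that $(M,g,\ffi)$ is one half of a round cylinder. I expect the main obstacle to be twofold: performing the algebra in the displayed drift--Laplacian identity sharply enough to land on the exact threshold $p=2-1/(n-1)$ through the refined Kato inequality, and --- more delicate --- extracting rigidity at the borderline exponent, where the Kato equality case by itself does not force $\nana\ffi\equiv0$ and the asymptotically cylindrical information has to be fed back in.
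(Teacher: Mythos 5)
Your overall strategy is the paper's: the drifted Bochner identity~\eqref{eq:Bochner_schw_p} for $|\na\ffi|_g^{p-1}$ (your computation with $v=w^\alpha$, $\alpha=(p-1)/2$, reproduces it exactly), integration against the weight $1/\sinh(\ffi)$, the refined Kato inequality to reach the threshold $p\geq 2-1/(n-1)$, and the vanishing of $\nana\ffi$ plus the asymptotics~\eqref{eq:ACg} and analyticity for rigidity; this is precisely the content of Proposition~\ref{prop:cyl_bis} together with Subsections~\ref{continuity} and~\ref{sec:diff}. Your treatment of (i) is a legitimate simplification: since $|\na\ffi|_g>0$ on $\{\ffi>\overline{\ffi}_0\}$, the first-variation/coarea computation gives differentiability and~\eqref{eq:der_fip} directly, whereas the paper's measure-theoretic ``repartition function'' argument is only needed to cope with critical level sets, which are absent here.

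The one genuine gap is the rigidity at the endpoint exponent $p=2-1/(n-1)$. There the vanishing of the bulk integral only yields equality in the refined Kato inequality on $\{\ffi\geq s_0\}$, and, as you yourself anticipate, this does not by itself force $\nana\ffi\equiv 0$. Your proposed fix (``combining with~\eqref{eq:tilde_R}, the full quasi-Einstein structure and~\eqref{eq:ACg}'') is a gesture, not an argument. What the paper actually does in the proof of Proposition~\ref{prop:cyl_bis}, following~\cite{Bou_Car}, is: the Kato equality implies that $|\na\ffi|_g$ is constant on each level set and that $g$ is a warped product $d\varrho\otimes d\varrho+\eta^2(\varrho)\,\g_{|\{\ffi=s_0\}}$, so every level set above $s_0$ is totally umbilic with constant mean curvature; one then applies the integral identity~\eqref{eq:id_byparts} at \emph{every} level $s\geq s_0$, whose right-hand side vanishes identically because of the Kato equality, to conclude $\Hg\equiv 0$ on each such level set; this gives $\langle\na|\na\ffi|_g^2\,|\,\na\ffi\rangle_g\equiv 0$, hence $|\na\ffi|_g$ constant on all of $\{\ffi\geq s_0\}$, and then~\eqref{eq:Bochner_schw_p} forces $\nana\ffi\equiv 0$. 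You need to supply this (or an equivalent) chain; without it the borderline case of (ii) remains unproved.
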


\section{Integral identities}
\label{sec:integral}

In this section, we derive some integral identities that will be used to analyze the properties of the functions $s \mapsto \Phi_p(s)$ introduced in~\eqref{eq:fip}.
To obtain the first identity, we are going to exploit
the equation $\Delta_g \, \ffi = 0$ in combination with Lemma~\ref{le:bound}.
\begin{proposition}
\label{prop:byparts}
Let $(M, \g, \ffi)$ be an aymptotically cylindrical solution to problem~\eqref{eq:pb_schw_reform},
then, for every $p\geq 1$ and for every $ s \in [\ffi_0, + \infty) \cap (0,+\infty)$, we have
\begin{equation}
\label{eq:id_byparts_bis}
\int\limits_{\{\ffi=s\}} \!\!
\frac{   |\na \ffi|_\g^{p} }{ \sinh(s)  }
\,\,\rmd\sigma_{\!g}  
\,\, =  \!\!
\int\limits_{\{\ffi> s\}} \!\!\frac{  |\na \ffi|_\g^{p-3}  
\Big(  \coth(\ffi) \, |\na\ffi|_g^{4}   \, 
- \,  (p-1)\,  \nana \ffi (\na\ffi, \!\na\ffi)  \Big) }{\sinh(\ffi)}
\,\,\rmd\mu_g \, .
\end{equation}
\end{proposition}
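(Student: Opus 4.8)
The plan is to obtain \eqref{eq:id_byparts_bis} as an instance of the Divergence Theorem applied to the vector field
\[
X \,=\, \frac{|\na\ffi|_g^{\,p-1}}{\sinh(\ffi)}\,\na\ffi \,,
\]
which is bounded on the super-level set $\{\ffi>s\}$, since there $1/\sinh(\ffi)\le 1/\sinh(s)$ and $|\na\ffi|_g$ is bounded by Lemma~\ref{le:bound} (note that $p-1\ge 0$). Using $\Deg\ffi=0$ together with the elementary identity $\tfrac12\,\na|\na\ffi|_g^2=\nana\ffi(\na\ffi,\cdot\,)$, a short computation on $M\setminus\mathrm{Crit}(\ffi)$, where $\mathrm{Crit}(\ffi)=\{\na\ffi=0\}$, yields
\[
\mathrm{div}_g X \,=\, -\,\frac{|\na\ffi|_g^{\,p-3}\,\big(\coth(\ffi)\,|\na\ffi|_g^4-(p-1)\,\nana\ffi(\na\ffi,\na\ffi)\big)}{\sinh(\ffi)}\,,
\]
so that $-\,\mathrm{div}_g X$ is exactly the integrand on the right-hand side of \eqref{eq:id_byparts_bis}. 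Estimating $|\nana\ffi(\na\ffi,\na\ffi)|\le|\nana\ffi|_g\,|\na\ffi|_g^2$ and using once more the bounds of Lemma~\ref{le:bound}, one checks that $|\mathrm{div}_g X|\le C/\sinh(\ffi)$ on $\{\ffi>s\}$; since the $g$-areas of the level sets of $\ffi$ are uniformly bounded by \eqref{eq:cylbounds}, the exponential decay of $1/\sinh(\ffi)$ guarantees that $\mathrm{div}_g X\in L^1(\{\ffi>s\},\rmd\mu_g)$ and that it is defined $\mathscr{H}^{n-1}$-a.e.\ there (which in particular makes the right-hand side of \eqref{eq:id_byparts_bis} meaningful).

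The next step is to integrate $\mathrm{div}_g X$ over $\{\ffi>s\}$, whose topological boundary is the level set $\{\ffi=s\}$ — no portion of $\pa M$ is involved, because $s\ge\ffi_0$ and $s>0$ — with outward unit normal $-\na\ffi/|\na\ffi|_g$; since $\langle X , -\na\ffi/|\na\ffi|_g \rangle=-|\na\ffi|_g^{\,p}/\sinh(s)$ on $\{\ffi=s\}$, this gives precisely \eqref{eq:id_byparts_bis}. To make the argument rigorous I would first take $s$ and $T$ to be regular values of $\ffi$ (a set of full measure, by Sard's Theorem) and work on the compact region $\{s<\ffi<T\}$: on the complement of an $\varepsilon$-tubular neighbourhood $\mathcal N_\varepsilon$ of $\mathrm{Crit}(\ffi)$ the field $X$ is smooth and the classical Divergence Theorem applies, while the flux of $X$ through $\pa\mathcal N_\varepsilon$ tends to $0$ as $\varepsilon\to 0$, because $X$ is bounded and, by \cite{Nadirashvili} and \cite{Che_Nab_Val}, the Hausdorff dimension of $\mathrm{Crit}(\ffi)$ does not exceed $n-2$, whence $\mathscr{H}^{n-1}(\pa\mathcal N_\varepsilon)\to 0$. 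Letting then $T\to+\infty$ along regular values, the flux through $\{\ffi=T\}$ vanishes since $|X|\le C/\sinh(T)\to 0$ while the areas $|\{\ffi=T\}|_g$ stay bounded by \eqref{eq:cylbounds}, and the interior integral converges by dominated convergence thanks to the $L^1$ bound above. Finally, a standard approximation argument — based on the fact that the right-hand side of \eqref{eq:id_byparts_bis} depends continuously on $s$, and that the left-hand side varies continuously along the level sets of the proper harmonic function $\ffi$ — extends the identity from a.e.\ $s$ to every $s\in[\ffi_0,+\infty)\cap(0,+\infty)$.

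The pointwise computation of $\mathrm{div}_g X$ is routine. I expect the main obstacle to be the rigorous justification of the Divergence Theorem in this non-compact setting with a level set that need not be regular: the two delicate contributions are the flux of $X$ through a neighbourhood of the critical set $\mathrm{Crit}(\ffi)$, which is controlled via its codimension bound, and the flux through the cylindrical end, which is controlled via the exponential weight $1/\sinh(\ffi)$ together with the uniform area bound of Lemma~\ref{le:bound}; the passage from regular values of $\ffi$ to arbitrary $s$ is then a soft continuity argument.
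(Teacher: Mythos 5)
Your choice of vector field, the pointwise divergence computation, and the treatment of the cylindrical end (flux through $\{\ffi=S\}$ killed by $|X|\le C/\sinh(S)$ together with the uniform area bound of Lemma~\ref{le:bound}) coincide with the paper's argument. The two places where your proposal has genuine gaps are both in the treatment of the critical set.

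First, the claim that the flux of $X$ through $\pa\mathcal N_\ep$ tends to zero ``because the Hausdorff dimension of ${\rm Crit}(\ffi)$ does not exceed $n-2$, whence $\mathscr{H}^{n-1}(\pa\mathcal N_\ep)\to0$'' does not follow: a bound on the Hausdorff dimension of a set gives no control whatsoever on the $(n-1)$-measure of the boundary of its $\ep$-neighbourhood (nor even on the volume of that neighbourhood). What is actually needed, and what the paper uses, is the quantitative \emph{Minkowski} estimate of \cite[Theorem 1.17]{Che_Nab_Val}, namely $\mathscr{H}^{n}\big(B_\ep({\rm Crit}(\ffi)\cap K)\big)\le C_\eta\,\ep^{2-\eta}$, hence $\mathscr{H}^n(B_\ep(\Sigma))=o(\ep)$. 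Even with this in hand one does not get $\mathscr{H}^{n-1}(\pa\mathcal N_\ep)\to0$ for every $\ep$; one must either select good radii via the coarea inequality, or (as in Theorem~\ref{thm:div} of the Appendix) replace the excision by cut-off functions $\psi_\ep$ supported in $B_{3\ep}(\Sigma)$ with $\int|\na\psi_\ep|\,\rmd\mu=o(1)$, which is exactly where the $o(\ep)$ volume bound enters.

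Second, your plan to prove the identity only for regular values $s$ and then extend to all $s$ by ``continuity of the left-hand side along the level sets'' is circular in the context of this paper: the continuity of $s\mapsto\int_{\{\ffi=s\}}|\na\ffi|_g^p\,\rmd\sigma_{\!g}$ across singular values is \emph{not} a soft fact, and the paper establishes it (Subsection~\ref{continuity}) precisely by rewriting $\Phi_p(s)$ via the identity~\eqref{eq:id_byparts_bis} as the repartition function of a measure absolutely continuous with respect to $\mu_g$. The paper avoids the circle by proving~\eqref{eq:id_byparts_bis} directly when $s$ is a singular value, applying the refined Divergence Theorem~\ref{thm:div} to $E=\{s<\ffi<S\}$ with $\Sigma=\pa E\cap{\rm Crit}(\ffi)$ and $\Gamma=\pa E\setminus\Sigma$, after checking that $X$ is bounded with bounded divergence on $\overline E$ (which holds for $p\ge1$ by Lemma~\ref{le:bound}). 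If you adopt that route, both gaps close simultaneously.
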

\begin{remark}
\label{rem:a_e1}
Before proceeding with the proof of the proposition, it is worth pointing out that the left hand side of~\eqref{eq:id_byparts_bis} is well defined also when $s$ is not a regular value of $\ffi$. In fact, since the function $\ffi$ is harmonic and proper, by the already cited results in~\cite{Hardt_Simon,Lin,Nadirashvili} and~\cite{Che_Nab_Val} one has that the $(n-1)$-dimensional Hausdorff measure of the level sets of $\ffi$ is finite. Moreover, the Hausdorff dimension of the critical set ${\rm Crit}(\ffi)$ of the function $\ffi$ is bounded above by $n-2$. In particular, the density that relates the volume element $\rmd\sigma_{\!g}$ -- which is well defined only on the regular portion of the level set -- to the everywhere defined volume element $\rmd \mathscr{H}^{n-1}$ is well defined and bounded $\mathscr{H}^{n-1}$-almost everywhere on $\{\ffi = s\}$. Hence, the integral make sense.
\end{remark}

\begin{proof}
For the sake of simplicity, we drop the subscript $\g$ in the notation of this proof. To prove identity~\eqref{eq:id_byparts_bis} when $s>0$ is a regular value of $\ffi$, we start from the formula
\begin{equation}
\label{eq:diver}
{\rm div}  \bigg( \frac{|\na \ffi|^{p-1} \, \na \ffi   }{\sinh(\ffi)} \bigg) \, = \, \frac{  |\na \ffi|^{p-3}  
\Big(     (p-1)\,  \nana \ffi (\na\ffi, \!\na\ffi) \, 
- \, \coth(\ffi) \, |\na\ffi|^{4}  \Big) }{\sinh(\ffi)} \, ,
\end{equation}
which follows from a direct computation, using the fact that $\ffi$ is harmonic. Next we integrate the above formula by parts using the Divergence Theorem in $\{s < \ffi < S \}$, where $S$ is so large that the level set $\{\ffi = S \}$ is regular. This gives
\begin{align}
\label{eq:byparts_fin}
 \int\limits_{\{s <\ffi < S\}} \!\!\!\!\!\!
\frac{  |\na \ffi|^{p-3}  
\Big(     (p-1)\,  \nana \ffi (\na\ffi, \!\na\ffi) \, 
- \, \coth(\ffi) \, |\na\ffi|^{4}  \Big) }{\sinh(\ffi)}
\,\,\rmd\mu \,\,= & \\
\nonumber \,=\,\,
\!\!\!\!
\int\limits_{\{\ffi=S\}}
\!\!\!
\frac{   |\na\ffi|^{p-1} \big\langle{\na\ffi}
\,\big|\,{\rm n}\big\rangle}{  \sinh(\ffi)  }    &   \,\,\rmd\sigma 
\,\,+
\!\!\int\limits_{\{\ffi=s\}}
\!\!\!
\frac{    |\na\ffi|^{p-1} \big \langle{\na \ffi}
\,\big|\, {\rm n}\big\rangle}{  \sinh(\ffi)  }       \,\,\rmd\sigma \, ,
\end{align}
where ${\rm n}$ is the outer unit normal. In particular, one has that ${\rm n} = -\na\ffi/|\na\ffi|$ on $\{\ffi=s\}$ and ${\rm n} =\na\ffi/|\na\ffi|$
on $\{\ffi=S\}$.
On the other hand, thanks to Lemma~\ref{le:bound}, it is immediate to deduce that 
$$
\lim_{S \to +\infty} \!\!\int\limits_{\{\ffi=S\}} \!\!\! \frac{|\na \ffi|_g^p}{\sinh(\ffi)} \,\,\rmd \sigma_{\!g} \, = \, 0 \,. 
$$ 
The statement of the proposition then follows at once. 

In the case where $s>0$ is a singular value of $\ffi$, we need to apply a slightly refined version of the Divergence Theorem, namely Theorem~\ref{thm:div} in the Appendix, in order to perform the integration by parts which leads to identity~\eqref{eq:byparts_fin}. The rest of the proof is then identical to what we have done for the regular case. According to the notations of Theorem~\ref{thm:div}, we set 
\begin{eqnarray*}
X  =  \frac{|\na\ffi|^{p-1}\na \ffi}{\sinh(\ffi)} \qquad \hbox{and} \qquad E = \{ s < \ffi < S\}\, .
\end{eqnarray*}
so that $\pa E = \{\ffi = s \} \sqcup \{ \ffi = S \}$. As we have already observed, since $\ffi$ is harmonic and proper, the $(n-1)$-dimensional Hausdorff measure of $\pa E$ is finite. As usual, we denote by ${\rm Crit}(\ffi) = \{  x\in M \, | \, \na \ffi (x)= 0  \}$ the set of the critical points of $\ffi$, and we set $\Sigma = \pa E \cap {\rm Crit}(\ffi)$ and $\Gamma = \pa E \setminus \Sigma$, so that $\pa E$ can be written as the disjoint union of $\Sigma$ and $\Gamma$. Moreover, up to choosing $S$ large enough, we can suppose that $\{\ffi = S \} \cap {\rm Crit}(\ffi)$ is empty. 
Since $|\na \ffi| >0$ in $\Gamma$, it is easy to check that, for every $x \in \Gamma$, there exists an open neighborhood $U_x$ of $x$ in $M$ such that $\pa E \cap U_x = \Gamma \cap U_x$ is a smooth regular hypersurface. 

To apply Theorem~\ref{thm:div}, we also need to check that $\mathscr{H}^{n}(B_\ep(\Sigma)) = o(\ep)$, as $\ep \to 0$, where, for an arbitrary set $A$, the $\ep$-neighborhood $B_\ep(A)$ of $A$ is given by $B_\ep (A)= \bigcup_{x \in A} B_\ep(x)$. To see this, we recall from~
\cite[Theorem 1.17]{Che_Nab_Val} that, since $\ffi$ is harmonic, for every $\eta >0$ there exists a constant $C_\eta>0$ such that 
\begin{equation*}
\mathscr{H}^{n} \big( B_\ep\big(\pa E \cap {\rm Crit}(\ffi) \big) \big) \, \leq \, C_\eta \, \ep^{2-\eta} \, , \phantom{0000000000000}
\end{equation*}
for every $\ep>0$. In particular, we have that
\begin{equation*}
\phantom{0000000000}\mathscr{H}^{n} \big( B_\ep\big( \Sigma \big) \big) \, = \, \mathscr{H}^{n} \big( B_\ep\big( \pa E \cap {\rm Crit}(\ffi) \big) \big) \, 
 = \,  o(\ep) \, , \qquad \hbox{as $\ep \to 0$.}
\end{equation*}

To apply Theorem~\ref{thm:div} we finally need to check that 
for $p\geq 1$ the vector field $X$ is bounded with bounded divergence in $\overline{E}$. But this can be easily deduced from formula~\eqref{eq:diver} and Lemma~\ref{le:bound} and the proof is complete.
\end{proof}

To obtain the second relevant integral identity, we are going to combine the equations of system~\eqref{eq:pb_schw_reform} with the Bochner formula
\begin{equation*}
\frac12\Deg|\na\ffi|_\g^2 \, = \, \big|\nana\ffi\big|^2_\g
+\Ricg(\na\ffi,\na\ffi)
+\big\langle\na\Deg\ffi \, \big| \,\na\ffi\big\rangle_{\g} \,.
\end{equation*} 
Using the fact that $\ffi$ is $\g$-harmonic together with the first equation in~\eqref{eq:pb_schw_reform}, it is easily seen that the Bochner formula reduces to
\begin{equation}
\label{eq:Bochner_schw}
\Delta_\g|\na \ffi|_\g^2 \, - \, \big\langle\na|\na \ffi|^2_\g \, \big| \,\na \log \big( \sinh( \ffi ) \big) 
\big\rangle_{\!\!\g} \,
= \,
2 \, |\na^2 \ffi|^2_\g \, .
\end{equation} 
For every $p \geq 3$, we compute
\begin{align*}
\na |\na \ffi|_\g^{p-1} & \,= \, \Big(\frac{p-1}{2}\Big)  |\na \ffi|_\g^{p-3} \, \na |\na \ffi|_\g^2 \, ,\\
\Deg |\na \ffi|_\g^{p-1} &\, = \, \Big(\frac{p-1}{2}\Big)  |\na \ffi|_\g^{p-3} \, {\Deg |\na \ffi|^2} +  (p-1)(p-3) \, |\na \ffi|_\g^{p-3} \, \big| \na |\na \ffi |_\g   \big|_\g^2 \, .
\end{align*}
We notice {en passant} that whenever $|\na \ffi|_g>0$ the above formul\ae\ make sense for every $p \geq 0$.
These  identities, combined with~\eqref{eq:Bochner_schw}, lead to
\begin{equation}
\label{eq:Bochner_schw_p}
\Delta_\g|\na \ffi|_\g^{p-1}\, - \, \big\langle\na|\na \ffi|^{p-1}_\g \, \big| \,\na \log \big( \sinh( \ffi ) \big) 
\big\rangle_{\!\!\g} =  (p-1)   |\na \ffi|_\g^{p-3}  
\Big(   \,  \big|\nana \ffi\big|_\g^2
 +  \, (p-3) \, \big| \na |\na \ffi |_\g   \big|_\g^2   \, \Big)  \,.
\end{equation}
Obviously, for $p=3$, the above formula coincides with~\eqref{eq:Bochner_schw}. We notice that the differential operator appearing on the left hand side of the above expression, namely
\begin{equation*}
\Delta_g \cdot \,\, - \, \,\big\langle\na \cdot \, \big| \,\na \log \big( \sinh( \ffi ) \big) 
\big\rangle_{\!\!\g} \, ,
\end{equation*}
is a drifted Laplace-Beltrami operator and thus it is formally self-adjoint with respect to the weighted measure $\big( 1/\sinh(\ffi) \big) \rmd\mu_g$.
Integrating by parts the identity~\eqref{eq:Bochner_schw_p} with respect to this weighted measure, we obtain the following proposition, which is the main result of this section.

\begin{proposition}
\label{prop:cyl}
Let $(M, \g, \ffi)$ be an aymptotically cylindrical solution to problem~\eqref{eq:pb_schw_reform} with $0 \leq \ffi_0<+\infty$.
Then, for every $p\geq 3$ and every $ s \in [\ffi_0, + \infty)$, we have
\begin{equation}
\label{eq:id_byparts}
\int\limits_{\{\ffi=s\}} \!\!
\frac{   |\na \ffi|_\g^{p-1}\,\HHH_g }{ \sinh(s)  }
\,\,\rmd\sigma_{\!g}  
\,\, =  \!\!
\int\limits_{\{\ffi> s\}} \!\!
\frac{  |\na \ffi|_\g^{p-3}  
\Big(   \,  \big|\nana \ffi\big|_\g^2
 +  \, (p-3) \, \big| \na |\na \ffi |_\g   \big|_\g^2   \, \Big)       }{\sinh(\ffi)}
\,\,\rmd\mu_g \, .
\end{equation}
Moreover, if there exists $s_0\in [\ffi_0, + \infty) \cap (0,+\infty)$ such that 
\begin{equation}
\label{eq:condition_cyl}
\int\limits_{\{\ffi=s_0\}}
{   |\na \ffi|_\g^{p_0-1}\,\HHH_g }
\,\rmd\sigma_{\!g}  \, \leq \, 0 \, ,
\end{equation}
for some $p_0 \geq 3$, then the manifold $(\{ \ffi \geq s_0\} , g)$ is isometric to one half round cylinder and $\ffi$ is an affine function.
\end{proposition}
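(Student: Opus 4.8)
The plan is to establish the integral identity \eqref{eq:id_byparts} first and then to read off the rigidity statement from the sign of its right hand side. For the identity I would start from the Bochner-type identity \eqref{eq:Bochner_schw_p} and note that, by a computation analogous to the one leading to \eqref{eq:diver} (and using $\Delta_\g\ffi=0$), its left hand side can be written in divergence form as
\begin{equation*}
\Delta_\g|\na\ffi|_\g^{p-1}-\big\langle\na|\na\ffi|_\g^{p-1}\,\big|\,\na\log\big(\sinh(\ffi)\big)\big\rangle_{\!\!\g}\,\,=\,\,\sinh(\ffi)\,{\rm div}\!\bigg(\frac{\na|\na\ffi|_\g^{p-1}}{\sinh(\ffi)}\bigg)\,.
\end{equation*}
Integrating this over the region $E=\{s<\ffi<S\}$, with $S$ so large that $\{\ffi=S\}$ is a regular level set, and applying the Divergence Theorem — in the refined form of Theorem~\ref{thm:div} when $s$ is a singular value, exactly as in the proof of Proposition~\ref{prop:byparts}: one checks via \cite[Theorem 1.17]{Che_Nab_Val} that $\mathscr{H}^n(B_\ep(\pa E\cap{\rm Crit}(\ffi)))=o(\ep)$, and, using Lemma~\ref{le:bound}, that for $p\geq 3$ the vector field $\na|\na\ffi|_\g^{p-1}/\sinh(\ffi)=(p-1)|\na\ffi|_\g^{p-3}\,\nana\ffi(\na\ffi,\cdot)/\sinh(\ffi)$ is bounded with bounded divergence on $\overline{E}$ — one is left with the boundary contributions on $\{\ffi=s\}$ and $\{\ffi=S\}$ (the left hand side being well defined also at singular values of $\ffi$, as in Remark~\ref{rem:a_e1}). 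On each regular level set the outward unit conormal is $\mp\na\ffi/|\na\ffi|_\g$, so by the mean curvature formula \eqref{eq:formula_curvature}, namely $\nana\ffi(\na\ffi,\na\ffi)=-\,\HHH_g|\na\ffi|_\g^3$, the boundary integrand equals $\pm(p-1)|\na\ffi|_\g^{p-1}\HHH_g/\sinh(\ffi)$. Finally, Lemma~\ref{le:bound} together with $1/\sinh(S)\to 0$ forces the contribution on $\{\ffi=S\}$ to vanish as $S\to+\infty$; dividing the resulting equality by $p-1>0$ gives \eqref{eq:id_byparts}, and since the bulk integrand is nonnegative for $p\geq 3$ the integral over $\{\ffi>s\}$ is automatically finite.

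For the rigidity statement, assume that \eqref{eq:condition_cyl} holds for some $p_0\geq 3$ and some $s_0\in[\ffi_0,+\infty)\cap(0,+\infty)$. Evaluating \eqref{eq:id_byparts} at $s=s_0$ and $p=p_0$ and using $\sinh(s_0)>0$ shows that the nonnegative quantity $\int_{\{\ffi>s_0\}}|\na\ffi|_\g^{p_0-3}\big(|\nana\ffi|_\g^2+(p_0-3)\,\big|\na|\na\ffi|_\g\big|_\g^2\big)/\sinh(\ffi)\,\rmd\mu_g$ is $\leq 0$, hence it vanishes. Since $\sinh(\ffi)>0$ on $\{\ffi>s_0\}$ and $p_0\geq 3$, this forces $\nana\ffi\equiv 0$ on the open set $\{\ffi>s_0\}\setminus{\rm Crit}(\ffi)$; because ${\rm Crit}(\ffi)$ has Hausdorff dimension at most $n-2$ by \cite[Theorem 1.17]{Che_Nab_Val} it has empty interior, so this set is dense in $\{\ffi>s_0\}$ and, $\nana\ffi$ being continuous, $\nana\ffi\equiv 0$ throughout $\{\ffi\geq s_0\}$. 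In particular $|\na\ffi|_\g$ is locally constant on $\{\ffi\geq s_0\}$; since it is positive near the end of $M$ (where $u\to 1$), it is a positive constant $c$ there. Consequently $\{\ffi\geq s_0\}$ contains no critical points of $\ffi$ and, being foliated by the compact level sets $\{\ffi=s\}$, $s\geq s_0$ — which are eventually connected — it is connected, with $\{\ffi=s_0\}$ a genuine regular cross section.

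It then remains to identify the geometry. On $\{\ffi\geq s_0\}$ the vector field $\na\ffi$ is parallel of constant length $c>0$, so $\ffi/c$ is a distance-like coordinate and the standard splitting argument applies: in the coordinates of \eqref{eq:expr_go}--\eqref{eq:ACg} the second fundamental form $\chg_{ij}=\nana_{ij}\ffi/|\na\ffi|_\g$ of the level sets vanishes, so the cross-sectional metric $g_{ij}\,d\vartheta^i\!\otimes d\vartheta^j$ does not depend on $\ffi$, and $(\{\ffi\geq s_0\},g)$ is a Riemannian product $[0,+\infty)\times(N,g_N)$ on which $\ffi$ is affine (cf.\ Remark~\ref{rem:pr_round}). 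On the other hand, the asymptotically cylindrical expansion \eqref{eq:ACg} says that these cross-sectional metrics converge, as $\ffi\to+\infty$, to $(2m)^{2/(n-2)}g_{\Sph^{n-1}}$; being independent of $\ffi$, they all coincide with it. Hence $(N,g_N)$ is the round sphere of radius $(2m)^{1/(n-2)}$ and $(\{\ffi\geq s_0\},g)$ is one half of a round cylinder, with $\ffi$ affine, as claimed.

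I expect the main obstacle to lie precisely in this last geometric step: turning the pointwise identity $\nana\ffi\equiv 0$ into a \emph{global} half-cylinder structure requires ruling out critical points and disconnectedness of $\{\ffi\geq s_0\}$ — which, as indicated above, reduces to the observation that $|\na\ffi|_\g$ is a positive constant there — and then matching the resulting product splitting with the asymptotics \eqref{eq:ACg} to pin down the cross section as the round sphere. The derivation of \eqref{eq:id_byparts} is, by contrast, a routine weighted integration by parts once \eqref{eq:Bochner_schw_p} and Lemma~\ref{le:bound} are in hand, modulo the by-now-familiar care at singular level sets. It is worth stressing that the hypothesis $p_0\geq 3$ is used exactly in making the bulk integrand in \eqref{eq:id_byparts} sign-definite: for $1\leq p<3$ the term $(p-3)\big|\na|\na\ffi|_\g\big|_\g^2$ has the wrong sign, and one must instead feed in a refined Kato inequality, which is the route taken for Theorem~\ref{thm:refined_conf}.
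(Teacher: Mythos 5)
Your proposal is correct and follows essentially the same route as the paper: the same weighted integration by parts of \eqref{eq:Bochner_schw_p} written in divergence form over $\{s<\ffi<S\}$ via the refined Divergence Theorem, the same use of Lemma~\ref{le:bound} to kill the boundary term at $\{\ffi=S\}$, and the same rigidity chain ($\nana\ffi\equiv 0$, hence $\na\ffi$ parallel of constant positive length, hence a product splitting matched against the asymptotics \eqref{eq:ACg}). The only deviations are cosmetic: you supply a density argument over ${\rm Crit}(\ffi)$ to upgrade $\nana\ffi=0$ from the regular set to all of $\{\ffi\geq s_0\}$ (a point the paper passes over), and you carry out the splitting in the coordinates of \eqref{eq:expr_go}--\eqref{eq:ACg} where the paper instead cites \cite[Theorem 4.1-(i)]{Ago_Maz_1}.
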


\begin{remark}
\label{rem:a_e}
Translating Remark~\ref{rem:uno} in terms of the conformally related quantities, it is easy to realize that the integral on the left hand side of~\eqref{eq:id_byparts} is well defined also when $s$ is a singular value of $\ffi$.
\end{remark}
\begin{remark}
\label{rem:analytic}
We observe that since the {\em static solution} $(M, g_0,u)$ to problem~\eqref{eq:pb_static_vacuum} is analytic (see for example~\cite{Chr}), the solution $(M,g,\ffi)$ to problem~\eqref{eq:pb_schw_reform} coming from $(M,g_0,u)$ through~\eqref{eq:g_schw} and~\eqref{eq:newvariable} is analytic as well. Hence, the conclusion of the rigidity statement in Proposition~\ref{prop:cyl} can be made stronger in the sense that if $(\{ \ffi \geq s_0\} , g)$ is isometric to one half round cylinder, then the entire manifold $(M,g)$ must be isometric to one half round cylinder and the corresponding {\em static solution} $(M, g_0,u)$ must be rotationally symmetric and thus isometric to the {\em Schwarzschild solution}~\eqref{eq:sol_schwarz}.
\end{remark}

\begin{proof} 
For the sake of simplicity, we drop the subscript $\g$ in the notation of this proof. In the same spirit as in Proposition~\ref{prop:byparts}, we start by considering the case where 
the level set $\{ \ffi = s \}$ is regular, meaning that $|\na \ffi| >0$ on $\{ \ffi = s\}$. We observe that, whenever $\ffi > 0$, one can write
\begin{align}
\label{eq:id_div}
{\rm div} \bigg( \frac{\na|\na\ffi|^{p-1}}
{\sinh(\ffi)} \bigg)
&\, = \,  \frac{\Delta|\na \ffi|^{p-1}\, - \, \big\langle\na|\na \ffi|^{p-1}\, \big| \,\na \log \big( \sinh( \ffi ) \big) 
\big\rangle}{\sinh(\ffi)} \nonumber \\
&\, = \,  (p-1) \,  \frac{ |\na \ffi|^{p-3}  
\Big(   \,  \big|\nana \ffi\big|^2
 +  \, (p-3) \, \big| \na |\na \ffi |   \big|^2   \, \Big) }{\sinh(\ffi)} \, ,
\end{align}
where in the second equality we used equation~\eqref{eq:Bochner_schw_p}. Since for every large enough $S>0$ the level set $\{ \ffi = S \}$ is regular 
(see the discussion after formula~\eqref{ass:AC})
we integrate by parts the above identity, obtaining 
\begin{multline*}
(p-1) \!\!\!\!\!\! \int\limits_{\{s <\ffi < S\}} \!\!\!\!\!\!
\frac{  |\na \ffi|^{p-3}  
\Big(   \,  \big|\nana \ffi\big|^2
 +  \, (p-3) \, \big| \na |\na \ffi |   \big|^2   \, \Big)       }{\sinh(\ffi)}
\,\,\rmd\mu \,\,=  \\
\,=\,\,
\!\!\!\!
\int\limits_{\{\ffi=S\}}
\!\!\!
\frac{    \big\langle{\na|\na\ffi|^{p-1}}
\,\big|\,{\rm n}\big\rangle}{  \sinh(\ffi)  }       \,\,\rmd\sigma
\,\,+
\!\!\int\limits_{\{\ffi=s\}}
\!\!\!
\frac{    \big\langle{\na|\na\ffi|^{p-1}}
\,\big|\, {\rm n}\big\rangle}{  \sinh(\ffi)  }       \,\,\rmd\sigma \, ,
\end{multline*}
where ${\rm n}$ is the outer $\g$-unit normal
of the set $\{s \leq \ffi \leq S \}$ at its boundary. In particular, one has that ${\rm n} = -\na\ffi/|\na\ffi|$ on $\{\ffi=s\}$ and ${\rm n} =\na\ffi/|\na\ffi|$
on $\{\ffi=S\}$. On the other hand, from the second formula in \eqref{eq:formula_curvature} it is easy to deduce that
$$
\langle\na|\na\ffi|^{p-1}|\na\ffi\rangle
\, = \, (p-1) \, |\na \ffi|^{p-3} \, \nana\ffi(\na\ffi,\na\ffi) \, = \, - (p-1) |\na \ffi|^p \, \HHH \, . 
$$
Therefore, we have obtained
\begin{equation}
\label{eq:int_part_fin}
\int\limits_{\{ s<\ffi< S\}}
\!\!\!\!\!\!\frac{  |\na \ffi|^{p-3}  
\Big(   \,  \big|\nana \ffi\big|^2
 +  \, (p-3) \, \big| \na |\na \ffi |   \big|^2   \, \Big)       }{\sinh(\ffi)}
\,\,\rmd\mu \,\, = \!\! 
\int\limits_{\{\ffi=s\}} \!\!\!\!
\frac{   |\na \ffi|^{p-1}\,\HHH }{ \sinh{(s)}  }
\,\,\rmd\sigma  
\,  - \!\!\! \int\limits_{\{\ffi=S\}}\!\!\!\!
\frac{   |\na \ffi|^{p-1}\,\HHH }{ \sinh{(S)}  }
\,\,\rmd\sigma \, .
\end{equation}
In order to obtain identity~\eqref{eq:id_byparts} it is sufficient to show that the last term on the right hand side tends to zero as $S\to +\infty$. To see this, we observe that
$|\na \ffi|^{p-1} \Ho \leq |\na \ffi|^{p-2} |\nana \ffi|$ and thus it is uniformly bounded, by estimate~\eqref{eq:cylbounds} in Lemma~\ref{le:bound}. The same estimate provides us with a uniform bound for the area of the level sets of $\ffi$. Hence, it is easy to arrive to the desired conclusion. This completes the proof of the proposition in the case where
$\{ \ffi =s\}$ is regular. 

In the case where $s>0$ is a singular value of $\ffi$, we need to apply a slightly refined version of the Divergence Theorem, namely Theorem~\ref{thm:div} in the Appendix, in order to perform the integration by parts which leads to identity~\eqref{eq:int_part_fin}. The rest of the proof is identical to what we have done for the regular case. According to the notations of Theorem~\ref{thm:div}, we set 
\begin{eqnarray*}
X  =  \frac{\na|\na\ffi|^{p-1}}{\sinh(\ffi)} \qquad \hbox{and} \qquad E= \{ s < \ffi < S\}\, .
\end{eqnarray*}

As it is easy to realize, the same considerations as in the proof of Proposition~\ref{prop:byparts} apply to the present situation. The only difference amounts to check that for $p\geq 3$ the vector field $X$ is bounded with bounded divergence in $\overline{E}$. To see this, we observe that the definition of $X$ combined with Kato inequality implies
$$
|X| \, \leq \, (p-1) \, \frac{|\na \ffi|^{p-2} \, |\nana \ffi|}{\sinh(\ffi)} \, . \phantom{0000\,}
$$
Moreover, using equation~\eqref{eq:id_div} together with Kato inequality, it is easy to deduce that
$$
|{\rm div} X| \, \leq \, (p-1) (p-2) \, \frac{|\na \ffi|^{p-3} \,  |\nana \ffi|^2}{\sinh(\ffi)} \, .
$$
Since $p \geq 3$, the claim follows now directly from Lemma~\ref{le:bound}. Hence, all the hypotheses of Theorem~\ref{thm:div} are in force and we can integrate ${\rm div} X$ by parts, obtaining
\begin{equation*}
\int\limits_{\{s < \ffi < S\}} \!\!\!\!\!\!
{\rm div} \bigg( \frac{\na|\na\ffi|^{p-1}}
{\sinh(\ffi)} \bigg)
\,\rmd\mu  \,\,\, = \,
\!\!\!\!
\int\limits_{\{\ffi=S\}}
\!\!\!
\frac{    \big\langle{\na|\na\ffi|^{p-1}}
\,\big|\,{\rm n}\big\rangle}{  \sinh(\ffi)  }       \,\,\rmd\sigma
\,\,\,+
\!\!\!\!\!\!\!\!\!\int\limits_{\{\ffi=s\} \setminus {\rm Crit}(\ffi)}
\!\!\!\!\!\!\!\!\!\!\!
\frac{    \big\langle{\na|\na\ffi|^{p-1}}
\,\big|\, {\rm n}\big\rangle}{  \sinh(\ffi)  }       \,\,\rmd\sigma \, .
\end{equation*}
Taking into account Remark~\ref{rem:a_e} and expression~\eqref{eq:id_div}, we have that identity~\eqref{eq:int_part_fin} holds true also in the case where $s$ is a singular value of $\ffi$. 

To prove the second part of the statement, we observe that from~\eqref{eq:id_byparts} and~\eqref{eq:condition_cyl} one immediatetely gets $\nana \ffi \equiv 0$ in $\{ \ffi \geq s_0 \}$. Moreover, by the asymptotic behavior of $\ffi$, one has that $|\na \ffi|$ is a positive constant in that region. Hence $\na \ffi$ is a nontrivial parallel vector field and by~\cite[Theorem 4.1-(i)]{Ago_Maz_1} we deduce that the Riemannian manifold $(\{ \ffi \geq s_0 \}, \g)$ is isometric to the manifold $\{ \ffi = s_0 \} \times [s_0, + \infty)$ endowed with the product metric $d\varrho \otimes d\varrho + \g_{|\{ \ffi = s_0 \}}$. Here $\varrho$ represents the distance to $\{ \ffi = s_0\}$ and the function $\ffi$ itself can be expressed as an affine function of $\varrho$ in $\{ \ffi \geq s_0 \}$, that is $\ffi = s_0 + \varrho \, |\na\ffi|_\g $, where $|\na \ffi|_\g$ is a positive constant. Combining the product structure with the asymptotic behavior~\eqref{eq:ACg}, as already observed in Remark~\ref{rem:pr_round}, we arrive at $\g_{|\{ \ffi = s_0 \}} = (2m)^{2/(n-2)} g_{\Sph^{n-1}}$, which is the desired conclusions.
\end{proof}

As an immediate consequence of the above proposition, we obtain the following corollary.

\begin{corollary}
\label{cor:riccinunu}
Let $(M, \g, \ffi)$ be an aymptotically cylindrical solution to problem~\eqref{eq:pb_schw_reform} with $\ffi_0 = 0$.
Then, for every $p\geq 3$, we have
\begin{equation}
\label{eq:id_byparts_ric}
-\!\!\int\limits_{\pa M} \!
|\na \ffi|_g^{p-2} \, \Ric_g (\nu_g, \nu_g)
\,\,\rmd\sigma_{\!g}  
\,\, =  \,
\int\limits_{M} \,\,
\frac{  |\na \ffi|_\g^{p-3}  
\Big(   \,  \big|\nana \ffi\big|_\g^2
 +  \, (p-3) \, \big| \na |\na \ffi |_\g   \big|_\g^2   \, \Big)       }{\sinh(\ffi)}
\,\,\rmd\mu_g \, .
\end{equation}
Moreover, if the left hand side of the above formula vanishes for some $p_0 \geq 3$, then the manifold $(M, g)$ is isometric to one half round cylinder and $\ffi$ is an affine function.
\end{corollary}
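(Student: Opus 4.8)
The plan is to deduce~\eqref{eq:id_byparts_ric} from the integral identity~\eqref{eq:id_byparts} of Proposition~\ref{prop:cyl} by letting the parameter $s$ tend to $0^+$, that is, by pushing the interior level set $\{\ffi = s\}$ down onto the boundary $\pa M = \{\ffi = 0\}$. The genuinely delicate point is that the weight $1/\sinh(s)$ on the left-hand side of~\eqref{eq:id_byparts} blows up as $s \to 0^+$; this is reconciled with the vanishing of $\HHH_g$ on the (totally geodesic) boundary, and the precise rate of vanishing is dictated by the quasi-Einstein equation in~\eqref{eq:pb_schw_reform}.

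First I would record an algebraic identity valid pointwise on $\{\ffi > 0\}$: evaluating the first equation of~\eqref{eq:pb_schw_reform} on the pair $(\na\ffi, \na\ffi)$ and simplifying (the $d\ffi\otimes d\ffi$ and $|\na\ffi|_g^2\,\g$ contributions cancel) gives $\Ric_g(\na\ffi,\na\ffi) = \coth(\ffi)\,\nana\ffi(\na\ffi,\na\ffi)$. Combined with the expression $\HHH_g = -\nana\ffi(\na\ffi,\na\ffi)/|\na\ffi|_g^3$ from~\eqref{eq:formula_curvature}, this yields, with $\nu_g = \na\ffi/|\na\ffi|_g$,
\[
|\na\ffi|_g^{p-1}\,\HHH_g \,=\, -\,\tanh(\ffi)\,|\na\ffi|_g^{p-2}\,\Ric_g(\nu_g,\nu_g)
\]
on every regular level set of $\ffi$. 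Since $\ffi$ is nonconstant and $\pa M$ is smooth, the Hopf Lemma gives $|\na\ffi|_g > 0$ on $\pa M$, hence on a collar $\{0 \le \ffi \le \delta\}$; for $0 < s \le \delta$ the level set $\{\ffi = s\}$ is therefore regular and converges smoothly to $\pa M$ as $s \to 0^+$. Inserting the displayed identity into~\eqref{eq:id_byparts} and using $\tanh(s)/\sinh(s) = 1/\cosh(s)$, I obtain, for every $p \ge 3$ and every $0 < s \le \delta$,
\[
-\,\frac{1}{\cosh(s)}\int\limits_{\{\ffi=s\}} |\na\ffi|_g^{p-2}\,\Ric_g(\nu_g,\nu_g)\,\rmd\sigma_{\!g} \,=\, \int\limits_{\{\ffi > s\}} \frac{|\na\ffi|_g^{p-3}\big(|\nana\ffi|_g^2 + (p-3)\,\big| \na |\na \ffi |_g \big|_g^2\big)}{\sinh(\ffi)}\,\rmd\mu_g \, .
\]

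The second step is to pass to the limit $s \to 0^+$. On the left-hand side $\cosh(s) \to 1$, and since $g$ and $\ffi$ are smooth up to $\pa M$ while $\{\ffi = s\}$ converges smoothly to $\pa M$, the integral converges to $\int_{\pa M} |\na\ffi|_g^{p-2}\,\Ric_g(\nu_g,\nu_g)\,\rmd\sigma_{\!g}$. On the right-hand side the integrand is nonnegative because $p \ge 3$, and the sets $\{\ffi > s\}$ increase to $M \setminus \pa M$ as $s \downarrow 0$ (recall $\ffi > 0$ in the interior by the Strong Maximum Principle and $\pa M$ is $\mu_g$-negligible); by monotone convergence the right-hand side converges to the right-hand side of~\eqref{eq:id_byparts_ric}, which is thereby also seen to be finite. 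This proves~\eqref{eq:id_byparts_ric}.

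Finally, for the rigidity statement: if the left-hand side of~\eqref{eq:id_byparts_ric} vanishes for some $p_0 \ge 3$, then the nonnegative integrand on the right vanishes $\mu_g$-almost everywhere, forcing $\nana\ffi \equiv 0$ on the nonempty open set $\{\ffi > 0\} \cap \{|\na\ffi|_g > 0\}$ (which contains a collar of $\pa M$ and the whole end). By the analyticity of $(M,g,\ffi)$ recalled in Remark~\ref{rem:analytic}, $\nana\ffi$ then vanishes identically on $M$, so $\na\ffi$ is a parallel vector field, nontrivial since $\ffi \to +\infty$ along the end. From here one argues exactly as in the second part of the proof of Proposition~\ref{prop:cyl}: by~\cite[Theorem~4.1-(i)]{Ago_Maz_1} the manifold $(M,g)$ splits isometrically as $\pa M \times [0,+\infty)$ with product metric and $\ffi$ affine, and comparison with the asymptotics~\eqref{eq:ACg} identifies the cross-section with a round sphere of radius $(2m)^{1/(n-2)}$; hence $(M,g)$ is one half round cylinder with totally geodesic boundary. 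The main obstacle in the whole argument is really the first step --- making the $s\to 0^+$ limit legitimate in spite of the singular weight --- and it is resolved precisely by the quasi-Einstein identity, which converts $\HHH_g/\sinh(s)$ into the bounded quantity $\Ric_g(\nu_g,\nu_g)/\cosh(s)$.
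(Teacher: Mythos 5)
Your argument is correct and follows essentially the same route as the paper's own proof: one rewrites $|\na\ffi|_g^{p-1}\HHH_g=-\tanh(\ffi)\,|\na\ffi|_g^{p-2}\,\Ric_g(\nu_g,\nu_g)$ (which you, usefully, derive from the quasi-Einstein equation), passes to the limit $s\to 0^+$ in identity~\eqref{eq:id_byparts} of Proposition~\ref{prop:cyl}, and obtains the rigidity from the vanishing of $\nana\ffi$ together with analyticity, the splitting theorem of~\cite{Ago_Maz_1} and the cylindrical asymptotics~\eqref{eq:ACg}. The additional details you provide (the Hopf Lemma collar and monotone convergence on the right-hand side) merely make explicit what the paper leaves implicit.
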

\begin{proof}
It is sufficient to observe that since $
|\na \ffi|_g^{p-1} \Hg \, = \, - \, \tanh(\ffi) \, |\na \ffi|_g^{p-2} \,\Ric_g (\nu_g, \nu_g)$, one has that
\begin{equation*}
\lim_{s \to 0^+} \!\! \int\limits_{\{\ffi=s\}} \!\!
\frac{   |\na \ffi|_\g^{p-1}\,\HHH_g }{ \sinh(s)  }
\,\,\rmd\sigma_{\!g} \,\, = \,\, -\!\!\int\limits_{\pa M} \!
|\na \ffi|_g^{p-2} \, \Ric_g (\nu_g, \nu_g)
\,\,\rmd\sigma_{\!g} \,.
\end{equation*}
The result is then a straightforward consequence of Proposition~\ref{prop:cyl}.
\end{proof}

We observe that a slightly stronger version of rigidity result contained in the previous proposition holds if $|\na \ffi|_g >0$ in the region $\{  \ffi \geq s_0\}$,  $\{ \ffi = s_0 \}$ being the level set where inequality~\eqref{eq:condition_cyl} is in force. In fact, in this case one can allow the exponent $p$ to vary in a larger range. It is worth pointing out that the condition $|\na \ffi| >0$ on the level sets which are sufficiently close to the end of $M$.

\begin{proposition}
\label{prop:cyl_bis}
Let $(M,g,\ffi)$ be an asymptotically cylindrical solution to problem~\eqref{eq:pb_schw_reform} with $0 \leq \ffi_0<+\infty$. Let $\overline{\ffi}_0 \in [\ffi_0,+\infty)$ be such that $|\na \ffi|_g>0$ in the region $\{  \overline{\ffi}_0 < \ffi <+ \infty\}$.
Then, for every $p\geq0$ and for every $s \in [\overline{\ffi}_0 , + \infty)$, formul\ae\ \eqref{eq:id_byparts_bis} and~\eqref{eq:id_byparts} hold true.
Suppose in addition that there exists $s_0\in [\overline{\ffi}_0, + \infty) \cap (0,+\infty)$ such that
\begin{equation}
\label{eq:condition_cyl_bis}
\int\limits_{\{\ffi=s_0\}}
{   |\na \ffi|_\g^{p_0-1}\,\HHH_g }
\,\rmd\sigma_g  \, \leq \, 0 \, ,
\end{equation}
holds for some $p_0 \geq 2-1/(n-1)$, then the manifold $(\{ \ffi \geq s_0\} , g)$ is isometric to one half round cylinder.
\end{proposition}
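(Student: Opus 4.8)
The plan is to re-run the arguments of Propositions~\ref{prop:byparts} and~\ref{prop:cyl} essentially verbatim, the whole point being that in the region $\{ \ffi > \overline{\ffi}_0 \}$ every level set of $\ffi$ is regular, so that no critical-set subtleties arise, and on each slab $\{ s \leq \ffi \leq S\}$ with $s>\overline{\ffi}_0$ the gradient $|\na \ffi|_g$ is squeezed between two positive constants. Indeed, by properness of $\ffi$ such a slab is compact and $|\na\ffi|_g$ is continuous and strictly positive on it, hence bounded below; on the far region $\{ \ffi \geq S\}$ with $S$ large, the traced equation~\eqref{eq:tilde_R} together with the asymptotically cylindrical behaviour forces $|\na\ffi|_g^2=\tfrac{n-2}{n-1}\Rg$ to stay close to the positive scalar curvature of the limiting round cylinder, while the upper bound is part of Lemma~\ref{le:bound}. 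First I would record this two-sided bound; from it, formulas~\eqref{eq:diver} and~\eqref{eq:id_div}, the ordinary Kato inequality $|\na|\na\ffi|_g|_g\le|\nana\ffi|_g$, and Lemma~\ref{le:bound}, it follows that for \emph{every} $p\geq 0$ the vector fields $X_1=|\na\ffi|_g^{p-1}\na\ffi/\sinh(\ffi)$ and $X_2=\na|\na\ffi|_g^{p-1}/\sinh(\ffi)$ are smooth, bounded and have bounded divergence on $\overline{\{ s< \ffi<S\}}$. Hence for $s>\overline{\ffi}_0$ the ordinary Divergence Theorem already suffices (Theorem~\ref{thm:div} is not needed), and letting $S\to +\infty$ kills the boundary term at infinity exactly as in the proofs of Propositions~\ref{prop:byparts}--\ref{prop:cyl}. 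This establishes the first assertion, namely that~\eqref{eq:id_byparts_bis} and~\eqref{eq:id_byparts} hold for all $p\geq0$ and all $s>\overline{\ffi}_0$; the remaining value $s=\overline{\ffi}_0$ is treated as in Propositions~\ref{prop:byparts}--\ref{prop:cyl} (using Theorem~\ref{thm:div} if that level set is singular), resp. by passing to the limit when $\overline{\ffi}_0=0$, as in Corollary~\ref{cor:riccinunu}.

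Next I would bring in the \emph{refined Kato inequality} for harmonic functions, which is the source of the improved range of exponents: since $\ffi$ is $g$-harmonic, at every point where $|\na\ffi|_g>0$ one has $|\nana\ffi|_g^2\geq\tfrac{n}{n-1}\,|\na|\na\ffi|_g|_g^2$. Inserting this into the integrand on the right-hand side of~\eqref{eq:id_byparts} gives, throughout $\{\ffi>\overline{\ffi}_0\}$,
\begin{equation*}
|\nana\ffi|_g^2+(p-3)\,|\na|\na\ffi|_g|_g^2\,\,\geq\,\,\Big(p-2+\tfrac{1}{n-1}\Big)\,|\na|\na\ffi|_g|_g^2\,,
\end{equation*}
which is nonnegative precisely when $p\geq 2-1/(n-1)$. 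Combined with the first step this already yields $\Phi_p'(s)\leq0$ for such $p$, i.e. the monotonicity half of Theorem~\ref{thm:refined_conf}.

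For the rigidity I would argue as follows. Suppose~\eqref{eq:condition_cyl_bis} holds at some $s_0\in[\overline{\ffi}_0,+\infty)\cap(0,+\infty)$ and some $p_0\geq2-1/(n-1)$. Dividing by $\sinh(s_0)>0$ and using~\eqref{eq:id_byparts} at $s=s_0$, the right-hand integral is $\leq0$; but its integrand is nonnegative by the previous step, hence it vanishes $\mathscr{H}^n$-almost everywhere on $\{\ffi>s_0\}$, and since $|\na\ffi|_g^{p_0-3}>0$ there we get $|\nana\ffi|_g^2+(p_0-3)|\na|\na\ffi|_g|_g^2\equiv0$ on $\{\ffi>s_0\}$, and hence on $\{\ffi\geq s_0\}$ by continuity. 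If $p_0>2-1/(n-1)$ strictly, the displayed estimate forces first $\na|\na\ffi|_g\equiv0$ and then $\nana\ffi\equiv0$ on $\{\ffi\geq s_0\}$. Once $\nana\ffi\equiv0$ there, $|\na\ffi|_g$ is a positive constant on $\{\ffi\geq s_0\}$, so $\na\ffi$ is a nontrivial parallel vector field; by \cite[Theorem~4.1-(i)]{Ago_Maz_1} the manifold $(\{\ffi\geq s_0\},g)$ splits isometrically as $\{\ffi=s_0\}\times[s_0,+\infty)$ with the product metric and $\ffi$ affine, and comparing with the asymptotics~\eqref{eq:ACg} (cf. Remark~\ref{rem:pr_round}) identifies the cross section with $(2m)^{2/(n-2)}g_{\Sph^{n-1}}$, so that $(\{\ffi\geq s_0\},g)$ is one half round cylinder, as claimed.

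The hard part is the borderline exponent $p_0=2-1/(n-1)$: there the refined Kato inequality is saturated, so $|\nana\ffi|_g^2+(p_0-3)|\na|\na\ffi|_g|_g^2\equiv0$ is \emph{exactly} the equality case $|\nana\ffi|_g^2=\tfrac{n}{n-1}|\na|\na\ffi|_g|_g^2$ and no longer forces $\nana\ffi\equiv0$ by itself. To close this case I would exploit the structure of the equality configuration — $\nana\ffi$ pointwise proportional to $\tfrac{n}{n-1}\,\tfrac{d\ffi\otimes d\ffi}{|\na\ffi|_g^2}-g$, so that the level sets of $\ffi$ are totally umbilic and $|\na\ffi|_g$ is constant along them — and feed this back into the quasi-Einstein equation of~\eqref{eq:pb_schw_reform} (equivalently, into identity~\eqref{eq:id_byparts_bis}), which should force the umbilicity factor to vanish and return us to $\nana\ffi\equiv0$ on $\{\ffi\geq s_0\}$. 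The remaining technical points — the uniform two-sided bound on $|\na\ffi|_g$ away from the critical set, and the harmlessness of the possibly singular level set $\{\ffi=\overline{\ffi}_0\}$ — are routine given Lemma~\ref{le:bound} and the estimates of \cite{Che_Nab_Val}.
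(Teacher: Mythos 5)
Your first two steps and the non-borderline rigidity case match the paper's proof essentially verbatim: the validity of~\eqref{eq:id_byparts_bis} and~\eqref{eq:id_byparts} for all $p\geq 0$ comes exactly from the fact that on $\{\ffi>\overline{\ffi}_0\}$ the negative powers $|\na\ffi|_g^{p-3}$ cause no trouble, the nonnegativity of the integrand for $p\geq 2-1/(n-1)$ comes from the refined Kato inequality $|\nana\ffi|_g^2\geq\tfrac{n}{n-1}|\na|\na\ffi|_g|_g^2$, and for $p_0>2-1/(n-1)$ the vanishing of the integrand forces $\nana\ffi\equiv 0$, after which the splitting via \cite[Theorem 4.1-(i)]{Ago_Maz_1} and the identification of the cross section through~\eqref{eq:ACg} proceed as in Proposition~\ref{prop:cyl}. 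All of this is correct and is the paper's argument.

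The gap is the borderline exponent $p_0=2-1/(n-1)$, which you correctly identify as the hard part but then only sketch: ``feed this back into the quasi-Einstein equation \dots which \emph{should} force the umbilicity factor to vanish'' is not a proof, and it is precisely here that the paper needs one additional idea. The paper's closing move is this: once the saturated Kato equality gives (following \cite[Proposition 5.1]{Bou_Car}) that $|\na\ffi|_g$ is constant on each level set and $g=d\varrho\otimes d\varrho+\eta^2(\varrho)\,g_{|\{\ffi=s_0\}}$ with totally umbilic level sets, one does \emph{not} return to the first equation of~\eqref{eq:pb_schw_reform}; instead one applies the integral identity~\eqref{eq:id_byparts} again, at \emph{every} level $s\geq s_0$ and with $p=p_0=2-1/(n-1)$. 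The right-hand side of~\eqref{eq:id_byparts} vanishes identically for all such $s$ (the pointwise integrand is zero by the Kato equality), while on the left-hand side $\Hg$ is constant along each level set by the warped-product structure, so
\begin{equation*}
\Hg\!\!\int\limits_{\{\ffi=s\}}\!\!|\na\ffi|_g^{\frac{n-2}{n-1}}\,\rmd\sigma_{\!g}\,=\,0
\qquad\hbox{for every $s\geq s_0$,}
\end{equation*}
and the positivity of the integral forces $\Hg\equiv 0$ on $\{\ffi\geq s_0\}$. Minimality plus umbilicity gives $\nana\ffi(\na\ffi,\na\ffi)\equiv 0$, hence $|\na\ffi|_g$ is constant in the whole region (not only along level sets), and then the Bochner identity~\eqref{eq:Bochner_schw_p} yields $\nana\ffi\equiv 0$, returning you to the split case. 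You should either reproduce this argument or supply a genuine computation in place of ``should force''; as written, the borderline case --- which is the only reason the threshold $2-1/(n-1)$ appears at all --- is not established.
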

\begin{proof} 
To prove the first part of the statement, namely the validity of formul\ae\ \eqref{eq:id_byparts_bis} and~\eqref{eq:id_byparts}, we observe that if $|\na \ffi|>0$ then the left hand side in equations~\eqref{eq:diver} and~\eqref{eq:id_div} are well defined for every $p \geq 0$ in $\{ \ffi \geq \overline{\ffi}_0\}$. In particular, one can perform the same integration by parts as in the previous propositions, obtaining identities~\eqref{eq:id_byparts_bis} and~\eqref{eq:id_byparts} for every $p \geq 0$. 

To prove the rigidity statement, we proceed in the same spirit as in the proof of Proposition~\ref{prop:cyl} and we observe
that if $p_0 \geq 2-1/(n-1)$, then $(p_0 - 3) \geq - n/(n-1)$ and one has by~\eqref{eq:id_byparts} and~\eqref{eq:condition_cyl_bis} that 
$$
|\nana \ffi|^2
 +  \, (p_0-3) \, | \na |\na \ffi |  |^2 \, \equiv \, 0 \, ,
$$
in $\{ \ffi \geq s_0\}$. In fact, the refined Kato inequality for harmonic functions gives
$$
|\nana \ffi|^2 \, \geq \, \frac{n}{n-1} \, | \na |\na \ffi ||^2 \, ,
$$
whenever $|\na \ffi| >0$, and thus the integrand on the right hand side of~\eqref{eq:id_byparts} is nonnegative. Now, if $(p_0 -3) > - n/(n-1)$, it is immediate to conclude that $|\nana \ffi| \equiv 0 \equiv |\na|\na\ffi||$ in $\{ \ffi \geq s_0\}$ and the thesis follows by the same arguments as in Proposition~\ref{prop:cyl}. In the limiting case where $(p_0-3) = -n/(n-1)$, one has that  
\begin{equation}
\label{eq:ref_kato}
|\nana \ffi|^2 \, = \, \frac{n}{n-1} \, | \na |\na \ffi ||^2 \, ,
\end{equation}
in $\{ \ffi \geq s_0\}$. Following the proof of~\cite[Proposition 5.1]{Bou_Car} it is possible to deduce that $|\na \ffi|^2$ is constant along the level sets of $\ffi$ and thus that the metric $g$ has a warped product structure in this region, namely 
\begin{equation}
\label{eq:warped}
g \,\, = \,\, d\varrho \otimes d\varrho + \eta^2(\varrho) \,\, \g_{|\{ \ffi = s_0 \}} \, ,
\end{equation}
for some positive warping function $\eta = \eta (\varrho)$. Moreover, $\ffi$ and $\rho$ satisfy the relationship
$$
\ffi (q) \, = \, s_0 \, + \, \kappa \!\!\int\limits_{0}^{\varrho(q)} \!\!\! \frac{{\rm d} \tau}{\eta^{n-1}(\tau)} \,, 
$$
for every $q \in \{ \ffi \geq s_0 \}$ and some $\kappa \geq 0$. In particular, $\ffi$ and $\varrho$ share the same level sets and, by formula~\eqref{eq:warped}, these are totally umbilic. In fact one has
\begin{equation*}
\chg_{ij} \, = \, \frac12 \frac{\pa \g_{ij}}{ \pa \varrho} \, = \, \frac{d \log \eta}{d \varrho} \, g_{ij} \, .
\end{equation*}
As a consequence, the mean curvature is constant along each level set of $\ffi$. Applying formula~\eqref{eq:id_byparts}, 
to every level set $\{\ffi = s \}$ with $s \geq s_0$ and $p_0= 2-1/(n-1)$, one gets  
$$
\HHH \! \! \int \limits_{\{ \ffi = s \}} \!\!\! |\na \ffi|^{\frac{n-2}{n-1}} \, \rmd \sigma  \, = \, 0 \, ,
$$
since the right hand side of~\eqref{eq:id_byparts} is always zero, due of~\eqref{eq:ref_kato}. This implies in turn that all the level sets $\{\ffi = s \}$ with $s\geq s_0$ are minimal and thus totally geodesics. From $\HHH \equiv 0$ one can also deduce that $\langle \na |\na\ffi|^2 \, \big| \, \na \ffi\rangle \equiv 0$ in $\{ \ffi \geq s_0\}$. Hence, $|\na \ffi|^2$ is constant in $\{ \ffi \geq s_0 \}$, and thus, by equation~\eqref{eq:Bochner_schw_p}, one gets $|\nana \ffi| \equiv 0$ in $\{\ffi \geq s_0 \}$. Again, the conclusion follows arguing as in Proposition~\ref{prop:cyl}.
\end{proof}

\section{Proof of Theorem~\ref{thm:main_conf} and Theorem~\ref{thm:refined_conf}}
\label{sec:proofs}


Building on the analysis of the previous section, we are now in the position to prove Theorem~\ref{thm:main_conf} and Theorem~\ref{thm:refined_conf}, which in turn imply Theorem~\ref{thm:main} and Theorem~\ref{thm:refined}, respectively.

\subsection{Continuity.}
\label{continuity}
We claim that under the hypotheses of Theorem~\ref{thm:main_conf} the function $\Phi_p$ is continuous, for $p\geq 1$. We first observe that since we are assuming that the boundary $\pa M$ is a regular level set of $\ffi$, the function $s \mapsto \Phi_p(s)$ can be described in term of an integral depending on the parameter $s$, provided $s \in [\ffi_0, \ffi_0 + 2\ep)$ with $\ep>0$ sufficiently small. In this case, the continuous dependence on the the parameter $s$ can be easily checked using standard results from classical differential calculus. Thus, we leave the details to the interested reader and we pass to consider the case where $s \in (\ffi_0 + \ep, + \infty)$. Thanks to Proposition~\ref{prop:byparts}
one can rewrite expression~\eqref{eq:fip} as 
\begin{align}
\label{eq:fip_div}
\Phi_p(s)\,\,
&= \,\, - \, \sinh(s) \!\!\!\int\limits_{\{\ffi>s\}}
\!\!
\frac{  |\na \ffi|_g^{p-3}  
\Big(     (p-1)\,  \nana \ffi (\na\ffi, \!\na\ffi) \, 
- \, \coth(\ffi) \, |\na\ffi|_g^{4}  \Big) }{\sinh(\ffi)}
\,\,\rmd\mu_g .
\end{align}
It is now convenient to set 
\begin{equation}
\label{eq:density}
\mu^{(p)}_{\,g} (E) \,\, =  \, \int\limits_{E} 
\frac{  |\na \ffi|_g^{p-3}  
\Big(     (p-1)\,  \nana \ffi (\na\ffi, \!\na\ffi) \, 
- \, \coth(\ffi) \, |\na\ffi|_g^{4}  \Big) }{\sinh(\ffi)}
\,\, \rmd\mu_g \, ,
\end{equation}
for every $\mu_g$-measurable set $E \subseteq \{ \ffi > \ffi_0 + \ep\}$. It is then clear that for $p \geq 1$ the measure $\mu_{\,g}^{(p)}$ is absolutely continuous with respect to $\mu_g$, since $|\na \ffi|_g^{p-3} \,\nana\ffi (\na \ffi, \na\ffi)\leq |\na \ffi|_g^{p-1} |\nana \ffi|_g $.
It is also worth pointing out that, under the hypotheses of Theorem~\ref{thm:refined_conf}, Proposition~\ref{prop:cyl_bis} is in force and thus the same conclusion holds for every $p \geq 0$. 

In view of~\eqref{eq:fip_div}, the function $s \mapsto \Phi_p(s)$ can be interpreted as the repartition function of the measure defined in~\eqref{eq:density}, up to the smooth factor $- \sinh(s)$. Thus, $s \mapsto \Phi_p(s)$ is continuous if and only if the assignment 
$$
s \longmapsto \mu^{(p)}_{\, g} (\{ \ffi > s \})
$$ 
is continuous. Thanks to \cite[Proposition 2.6]{Amb_DaP_Men} and thanks to the fact that $\mu_{\,g}^{(p)}$ is absolutely continuous with respect to $\mu_g$, proving the continuity of the above assignment 
is equivalent to  checking that $\mu_g (\{ \ffi = s \}) = 0$ for every $s>\ffi_0 +\ep$. On the other hand, the Hausdorff dimension of the level sets of $\ffi$ is at most $n-1$, as it follows from the results in~\cite{Hardt_Simon,Lin}. Hence, they are negligible with respect to the full $n$-dimensional measure. This proves the continuity of $\Phi_p$ for $p\geq 1$ under the hypotheses of Theorem~\ref{thm:main_conf} and for every $p \geq 0$ under the hypotheses of Theorem~\ref{thm:refined_conf}.

\subsection{Differentiability.}
\label{sec:diff} 
We now turn our attention to the issue of the differentiability of the functions $s \mapsto \Phi_p(s)$.  
As already observed in the previous subsection, we are assuming that the boundary $\pa M$ is a regular level set of $\ffi$ so that the function $s \mapsto \Phi_p(s)$ can be described in term of an integral depending on the parameter $s$, provided $s \in [\ffi_0, \ffi_0 + 2\ep)$ with $\ep>0$ sufficiently small. Again, the differentiability in the parameter $s$ can be easily checked in this case, using standard results from classical differential calculus. Leaving the details to the interested reader, we pass to consider the case where $s \in (\ffi_0 + \ep, + \infty)$. We start by noticing that for every $p\geq 2$ the function
\begin{equation*}
\frac{  |\na \ffi|_g^{p-4}  
\Big(     (p-1)\,  \nana \ffi (\na\ffi, \!\na\ffi) \, 
- \, \coth(\ffi) \, |\na\ffi|_g^{4}  \Big) }{\sinh(\ffi)}
\end{equation*}
has finite integral in $\{ \ffi > s\}$, for every $s>\ffi_0 + \ep$. Hence, we can apply the coarea formula to expression~\eqref{eq:fip_div}, obtaining
\begin{align}
\label{eq:fip_coa}
\nonumber \Phi_p(s)\,\,& =\,\,
-\, \sinh(s)  \int\limits_{\{\tau>s\}}  \int\limits_{\{\ffi =\tau \}}\!\!\! \frac{(p-1)\, |\na\ffi|_g^{p-4}\, \nana \ffi (\na\ffi, \!\na\ffi) \, 
- \, \coth(\ffi) \, |\na\ffi|_g^{p}}{\sinh(\ffi)}
\,\,\rmd\sigma_{\!g} \,\, \rmd \tau \\
\nonumber & = \,\, 
\sinh(s)  \int\limits_{\{\tau>s\}}  \int\limits_{\{\ffi =\tau \}}\!\!\! \frac{(p-1)\, |\na\ffi|_g^{p-1}\, \Hg \, 
+ \, \coth(\ffi) \, |\na\ffi|_g^{p}}{\sinh(\ffi)}
\,\,\rmd\sigma_{\!g} \,\, \rmd \tau 
\\
&  = \,\, \sinh(s)  \int\limits_{\{\tau>s\}} \!
\Bigg( \,  (p-1)  \!\!\!\! \int\limits_{\{\ffi =\tau \}}\!\!\!\!   \frac{\, |\na\ffi|_g^{p-1}\, \Hg}{\sinh(\tau)} \,\rmd\sigma_{\!g}  \,\, 
+ \,\,  \frac{\coth(\tau)}{\sinh(\tau)}  \, \Phi_p(\tau)
\Bigg)  \,\, \rmd \tau \,,
\end{align}
where in the second equality we have used~\eqref{eq:formula_curvature} and in the third equality we have used the definition of $\Phi_p$ given by formula~\eqref{eq:fip}. By the Fundamental Theorem of Calculus, we have that if the function
\begin{equation*}
\tau \,\, \longmapsto \,\, (p-1) \! \!\!\! \int\limits_{\{\ffi =\tau \}}\!\!\! \!  \frac{\, |\na\ffi|_g^{p-1}\, \Hg}{\sinh(\tau)} \,\rmd\sigma_{\!g}  \,\, 
+ \,\,  \frac{\coth(\tau)}{\sinh(\tau)}  \, \Phi_p(\tau)
\end{equation*}
is continuous, then $\Phi_p$ is differentiable. 
Since we have already discussed in Subsection~\ref{continuity} the continuity of $s \mapsto \Phi_p(s)$,
we only need to discuss
the continuity of the assignment 
\begin{equation}
\label{eq:integrand2}
\tau \,\, \longmapsto  \!\! \int\limits_{\{\ffi =\tau \}}\!\!\! \!  \frac{\, |\na\ffi|_g^{p-1}\, \Hg}{\sinh(\tau)} \,\rmd\sigma_{\!g} \,\, = \!\! \int\limits_{\{\ffi> \tau \}} \!\!
\frac{  |\na \ffi|_\g^{p-3}  
\Big(   \,  \big|\nana \ffi\big|_\g^2
 +  \, (p-3) \, \big| \na |\na \ffi |_\g   \big|_\g^2   \, \Big)       }{\sinh(\ffi)}
\,\,\rmd\mu_g \, .
\end{equation}
We note that the above equality follows from the integral identity~\eqref{eq:id_byparts} and thus from Proposition~\ref{prop:cyl}, which is in force under the hypotheses of Theorem~\ref{thm:main_conf}-(ii), or from Proposition~\ref{prop:cyl_bis}, which is in force under the hypotheses of Theorem~\ref{thm:refined_conf}. In analogy with~\eqref{eq:density} it is natural to set 
\begin{equation*}
\bar\mu^{(p)}_{\,g} (E) \,\, =  \, \int\limits_{E} 
\frac{  |\na \ffi|_\g^{p-3}  
\Big(   \,  \big|\nana \ffi\big|_\g^2
 +  \, (p-3) \, \big| \na |\na \ffi |_\g   \big|_\g^2   \, \Big)       }{\sinh(\ffi)}
\,\, \rmd\mu_g \, ,
\end{equation*}
for every $\mu_g$-measurable set $E \subseteq \{ \ffi > \ffi_0 + \ep\}$. It is now clear that for $p \geq 3$ the measure $\overline{\mu}_{\,g}^{(p)}$ is absolutely continuous with respect to $\mu_g$, and that under the assumptions of Theorem~\ref{thm:refined_conf} the same conclusion holds for every $p \geq 0$. Hence, using the same reasoning as in Subsection~\ref{continuity}, we deuce that the assignment~\eqref{eq:integrand2} is continuous. In turn, we obtain the differentiability of $\Phi_p$ for $p\geq 3$, under the hypotheses of Theorem~\ref{thm:main_conf}, and for every $p \geq 0$, under the hypotheses of Theorem~\ref{thm:refined_conf}. Finally, using~\eqref{eq:fip_coa} and~\eqref{eq:id_byparts}, a direct computation shows that 
\begin{align}
\label{eq:fip_mono}
\nonumber \Phi_p'(s) \,\, 
& =  \,\, - \, (p-1) \!\! \int\limits_{\{\ffi = s \}}\!\!\!\!   {\, |\na\ffi|_g^{p-1}\, \Hg} \,\rmd\sigma_{\!g} \\
& = \,\,- \,(p-1)\,\, \sinh (s) \!\! \int\limits_{\{\ffi> s\}} \!\!
\frac{  \, |\na \ffi|_\g^{p-3}  
\Big(   \,  \big|\nana \ffi\big|_\g^2
 +  \, (p-3) \, \big| \na |\na \ffi |_\g   \big|_\g^2   \, \Big)       }{\sinh(\ffi)}
\,\,\rmd\mu_g \,.
\end{align}
The monotonicity and the rigidity statements in Theorem~\ref{thm:main_conf}-(ii) and Theorem~\ref{thm:refined_conf}-(ii) are now consequences of Proposition~\ref{prop:cyl} and Proposition~\ref{prop:cyl_bis}, respectively.

\subsection{A rigidity result under null Dirichlet boundary conditions.}
To complete our analysis, we need to prove statement (iii) in Theorem~\ref{thm:main_conf}. To this aim, we observe that
\begin{align*}
\frac{\Phi_p'(s)}{s} \,\, & = \,\, - \, (p-1) \,\, \frac{\sinh(s)}{s}\!\! \int\limits_{\{\ffi = s \}}\!\!\!\!   \frac{\, |\na\ffi|_g^{p-1}\, \Hg}{\sinh(s)} \,\rmd\sigma_{\!g} \,\, = \,\, (p-1) \,\, \frac{\sinh(s)}{s}\!\!\int\limits_{\pa M} \!
\frac{|\na \ffi|_g^{p-2} \, \Ric_g (\nu_g, \nu_g)}{\cosh(s)}
\,\,\rmd\sigma_{\!g} \,.
\end{align*}
Taking the limit as $s \to 0^+$ and using~\eqref{eq:id_byparts_ric} in Corollary~\ref{cor:riccinunu} gives~\eqref{eq:der2_fip}. The rigidity statement follows directly from Corollary~\ref{cor:riccinunu}.

\renewcommand{\theequation}{A-\arabic{equation}}
\renewcommand{\thesection}{A}
\setcounter{equation}{0}  
\setcounter{theorem}{0}  
\section*{Appendix}  

The following theorem 
is an extension of the classical Divergence Theorem to the case of open domains whose boundary has a (not too big) nonsmooth portion. Eventhough the argument is quite classical,
we collect the statement and the proof here for
the convenience of the reader.

\begin{theorem}
\label{thm:div}
Let $(M,g)$ be a $n$-dimensional Riemannian manifold, 
with $n\geq 2$,
let $E \subset M$ be a bounded open 
subset of $M$ with compact boundary $\pa E$ of finite $(n-1)$-dimensional Hausdorff measure, 
and suppose that $\pa E = \Gamma \sqcup \Sigma$, where the subsets 
$\Gamma$ and $\Sigma$ have the following properties:
\begin{itemize}
\item[(i)] For every $x \in \Gamma$, there exists an open neighborhood $U_x$ of $x$ in $M$ such that $\Gamma \cap U_x$ is a smooth regular hypersurface. 
\item[(ii)] 
The subset $\Sigma$ is compact and, 
setting $B_\ep (\Sigma) =  \bigcup_{x \in \Sigma} B_\ep (x)$, 
it holds that $\mathscr{H}^{n}(B_\ep(\Sigma)) = o(\ep)$
as $\ep \to 0$.
\end{itemize}
If $X$ is a differentiable vector field defined in a neighborhood of $\overline{E}$ and there exists a positive constant $C>0$ such that
\begin{equation}
\label{eq:reg_X}
\sup_{ \overline{E}} \Big( \, |X| + |{\rm div} X| \, \Big) \,\leq \, C  ,
\end{equation}
then the following identity holds true
\begin{equation}
\label{eq:thm_div}
\int\limits_E {\rm div} X \, \rmd \mu \, = \, \int\limits_{\Gamma} \langle  X | {\rm n}  \rangle \, \rmd \sigma  ,
\end{equation}
where ${\rm n}$ denotes the exterior unit normal vector field.
\end{theorem}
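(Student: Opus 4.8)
The plan is to approximate $X$ by vector fields vanishing near the bad set $\Sigma$, apply the classical Divergence Theorem to these, and pass to the limit. First, for each $\ep>0$ I would build a Lipschitz cutoff $\chi_\ep:M\to[0,1]$ with $\chi_\ep\equiv 0$ on $B_{\ep/2}(\Sigma)$, $\chi_\ep\equiv 1$ on $M\setminus B_{\ep}(\Sigma)$ and $|\na\chi_\ep|\le 2/\ep$ almost everywhere; the function $x\mapsto\min\{1,\max\{0,(2/\ep)(\dist(x,\Sigma)-\ep/2)\}\}$ works, since $\dist(\cdot,\Sigma)$ is $1$-Lipschitz. The point of the cutoff is that $\chi_\ep X$ vanishes on the open neighborhood $B_{\ep/2}(\Sigma)$ of $\Sigma$, so its support meets $\pa E$ only inside the closed (hence compact) set $\pa E\setminus B_{\ep/2}(\Sigma)$, which by construction is contained in $\Gamma$.

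Next I would prove the intermediate identity
\begin{equation*}
\int\limits_E {\rm div}(\chi_\ep X)\,\rmd\mu \,=\, \int\limits_\Gamma \chi_\ep\,\langle X \,|\,{\rm n}\rangle\,\rmd\sigma .
\end{equation*}
This is where hypothesis (i) enters. Since $\pa E\setminus B_{\ep/2}(\Sigma)$ is a compact subset of $\Gamma$, by (i) it is covered by finitely many charts $U_1,\dots,U_N$ in each of which $\pa E$ is a smooth regular hypersurface, so that $E\cap U_k$ is a domain with smooth boundary $\Gamma\cap U_k$ (locally $E$ lies on one side of this hypersurface). Together with the open set $V_0:=B_{\ep/2}(\Sigma)$, on which $\chi_\ep\equiv 0$, and an open set $O$ covering the remaining compact piece $\overline{E}\setminus(V_0\cup\bigcup_k U_k)$, which is a compact subset of the open set $E$ and hence may be enclosed in some $O\Subset E$, one obtains a finite open cover of $\overline{E}$. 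Choosing a subordinate smooth partition of unity $\{\psi_0,\psi_1,\dots,\psi_N,\psi_{N+1}\}$ with $\psi_{N+1}$ supported in $O$, and writing $\chi_\ep X=\sum_j\psi_j\chi_\ep X$: the $j=0$ term vanishes identically because $\chi_\ep\equiv 0$ on $V_0$; the $j=N+1$ term contributes nothing since it is compactly supported in $E$ (Divergence Theorem without boundary term); and each $j=k\in\{1,\dots,N\}$ term is handled by the classical Divergence Theorem on the smooth domain $E\cap U_k$, with no contribution from $\pa U_k$ since $\psi_k$ is compactly supported in $U_k$. Summing, and using that on $\Gamma$ one has $\sum_{k=1}^N\psi_k\chi_\ep=\chi_\ep$, yields the displayed identity.

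Finally I would let $\ep\to 0$. On the left-hand side, ${\rm div}(\chi_\ep X)=\chi_\ep\,{\rm div}X+\langle\na\chi_\ep\,|\,X\rangle$: the first summand is bounded by $C$ and converges $\mu$-a.e.\ on $E$ to ${\rm div}X$ (note $\mathscr H^n(\Sigma)\le\mathscr H^n(B_\ep(\Sigma))=o(\ep)$ forces $\mathscr H^n(\Sigma)=0$), so dominated convergence gives $\int_E\chi_\ep\,{\rm div}X\,\rmd\mu\to\int_E{\rm div}X\,\rmd\mu$; the second summand is supported in $B_\ep(\Sigma)\cap E$ and estimated in absolute value by $(2/\ep)\,C\,\mathscr H^n(B_\ep(\Sigma))$, which is $o(1)$ precisely by hypothesis (ii). On the right-hand side, $\chi_\ep\langle X\,|\,{\rm n}\rangle$ is dominated by $C\in L^1(\Gamma,\mathscr H^{n-1})$ because $\mathscr H^{n-1}(\Gamma)\le\mathscr H^{n-1}(\pa E)<\infty$, and $\chi_\ep\to 1$ pointwise on $\Gamma$ since every point of $\Gamma$ has positive distance from the closed set $\Sigma$; dominated convergence then gives $\int_\Gamma\chi_\ep\langle X\,|\,{\rm n}\rangle\,\rmd\sigma\to\int_\Gamma\langle X\,|\,{\rm n}\rangle\,\rmd\sigma$, which is \eqref{eq:thm_div}. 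The main obstacle is the intermediate identity: one must convert the purely local smoothness of $\Gamma$ in (i) together with the measure-theoretic smallness of $\Sigma$ in (ii) into a bona fide application of the classical Divergence Theorem, and the partition-of-unity localization is what makes this rigorous; once (ii) has been used to annihilate the cutoff's gradient term, the limiting step is routine dominated convergence.
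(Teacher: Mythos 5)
Your proof is correct and follows essentially the same strategy as the paper's: cut off the vector field near $\Sigma$, apply the classical Divergence Theorem to the truncated field, kill the gradient term of the cutoff using hypothesis (ii), and conclude by dominated convergence on both sides. The only cosmetic difference is that you justify the intermediate identity by an explicit partition-of-unity localization along $\Gamma$, where the paper instead invokes a ``smooth modification'' of $E$ away from the support of the cutoff; your version is, if anything, slightly more detailed.
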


\begin{proof}
Consider first the trivial case where $X$ vanishes in a 
neighborhood $U$ of $\Sigma$. In this case, let $\tilde E$ 
be a smooth modification of $E$ such that 
$\tilde E\setminus U=E\setminus U$. Hence, we can deduce that
\begin{equation*}
\int\limits_E{\rm div}X\,\rmd\mu
=
\int\limits_{\tilde E}{\rm div}X\,\rmd\mu
=
\int\limits_{\pa\tilde E}
\langle  X | {\rm n}  \rangle \, \rmd \sigma
=
\int\limits_{\Gamma}
\langle  X | {\rm n}  \rangle \, \rmd \sigma,
\end{equation*}
where in the second equality we have used the standard
Divergence Theorem. 
To deal with the general case, we now introduce a suitable
approximation $X_{\ep}$ of $X$ which vanishes in a neighborhood of $\Sigma$, whose size tends to zero as $\ep \to 0$. Using the hypothesis (ii), it is not hard to construct a family of cut-off functions $\{\psi_{\ep}\}_{0<\ep\leq 1}\subset{\mathscr C}^1(M)$ with the following properties:
\begin{align*}
{\rm(a)}&\quad 0\leq\psi_{\ep}\leq1\, ,
\phantom{\int\limits}
\\
{\rm(b)}&\quad \psi_{\ep}\equiv 1 \,\,\,\, \mbox{in $B_{\ep}(\Sigma)$} \qquad \hbox{and} \qquad
\psi_{\ep}\equiv0 \,\,\,\, \mbox{in $M\setminus B_{3\ep}(\Sigma)$} \, ,\\
{\rm(c)}&\,\,\,\, \int\limits_{M}\psi_{\ep}\,\rmd\mu \, = \, o(\ep) \hspace{1cm} \hbox{and}
\qquad
\int\limits_{M}|\na\psi_{\ep}|\, \rmd\mu\, = \, o(1)\, , \quad \hbox{as $\ep \to 0$} \, .
\end{align*}
Having the family  $\{\psi_{\ep}\}_{0<\ep\leq 1}$ at hand, we set 
\begin{equation*}
X_{\ep} \, = \, (1-\psi_{\ep})\, X  .
\end{equation*}
 Since by construction
$X_{\ep}$ vanishes in $B_{\ep}(\Sigma)$, 
from previous considerations we have that the theorem
holds true for $X_{\ep}$, that is
\begin{equation}
\label{eq_thm_div_ep}
\int\limits_E {\rm div} X_{\ep} \, \rmd \mu 
\,\, = \, 
\int\limits_{\Gamma} \langle  X_{\ep} | {\rm n}  \rangle \, \rmd \sigma  .
\end{equation}
Observe now that, by hypothesis~\eqref{eq:reg_X} and property (c), one gets
\begin{align*}
\bigg|\int\limits_E {\rm div} X_{\ep} \, \rmd \mu-
\int\limits_E {\rm div} X \, \rmd \mu \, \bigg|
& \leq 
\int\limits_E 
\big| \,
\langle  \na\psi_{\ep} | X \rangle  +   \psi_{\ep}\, {\rm div}X
\, \big| \, \rmd \mu  \leq  C  \bigg(  \int\limits_{M}   |\na\psi_{\ep}|\,\rmd\mu   +  \! \int\limits_{M} \psi_{\ep}  \,\rmd\mu      \bigg)  = o(1),
\end{align*}
as $\ep \to 0$. On the other hand, we assumed that the $(n-1)$-dimensional Hausdorff measure of $\pa E$ and thus also the measure of $\Gamma$ is finite. Combining this fact with the hypothesis~\eqref{eq:reg_X}, we deduce that $|\langle  X | {\rm n} \rangle| \in L^1(\Gamma)$. Hence, by the Dominated Convergence Theorem, we conclude that
\begin{equation*}
\bigg|
\int\limits_{\Gamma}
\langle  X_{\ep} | {\rm n} \rangle\, \rmd \sigma
-
\int\limits_{\Gamma}
\langle  X | {\rm n} \rangle\, \rmd \sigma
\bigg|
\, = \, 
\bigg|
\int\limits_{\Gamma}
\langle  \psi_{\ep} X | {\rm n} \rangle\, \rmd \sigma
\bigg| \, \leq \, \int\limits_{\Gamma}
\psi_{\ep} \, | \langle   X | {\rm n} \rangle| \, \rmd \sigma
\, \longrightarrow \, 0 \, ,
\end{equation*}
since $\psi_{\ep}\to0$ pointwise on $\Gamma$, as $\ep \to 0$. This limit and the previous estimate show that
\eqref{eq:thm_div} can be derived in the limit
as $\ep\to 0$ from \eqref{eq_thm_div_ep}.
This concludes the proof of the theorem. 
\end{proof}

\subsection*{Acknowledgements}
\emph{
V.~A. has received funding from the 
European Research Council / ERC Advanced Grant ${n^o}\ 340685$.
L.~M. has been partially supported by the Italian project FIRB 2012 ``Geometria Differenziale e Teoria Geometrica delle Funzioni'' as well as by the SNS project ``Geometric flows and related topics''.
The authors are members of the Gruppo Nazionale per
l'Analisi Matematica, la Probabilit\`a e le loro Applicazioni (GNAMPA)
of the Istituto Nazionale di Alta Matematica (INdAM).
}


\bibliographystyle{plain}

\end{document}